\documentclass[a4paper,11pt]{article}
\usepackage[utf8]{inputenc}

\usepackage{cmap}
\usepackage[T1]{fontenc}
\usepackage{diagbox}

\newcommand{\vertiii}[1]{{\left\vert\kern-0.25ex\left\vert\kern-0.25ex\left\vert #1 
    \right\vert\kern-0.25ex\right\vert\kern-0.25ex\right\vert}}

\usepackage{subcaption}

\usepackage{tocloft}

\usepackage{cite}

\usepackage[english]{babel}
\usepackage{amsfonts}
\usepackage{amssymb, amsthm}
\usepackage{xcolor,graphicx}
\usepackage{float}
\usepackage{marginnote}
\usepackage{amsmath}
\usepackage{a4wide}
\usepackage[affil-it]{authblk}

\usepackage{epstopdf}
\usepackage[margin=0.8in]{geometry}

\usepackage{titlesec}
\titlelabel{\thetitle.\quad}

\usepackage[labelsep=period]{caption}

\usepackage{enumitem}

\usepackage{crossreftools,ulem}
\makeatletter
\newcommand{\optionaldesc}[2]{%
	\phantomsection
	#1\protected@edef\@currentlabel{#1}\label{#2}%
}
\makeatother

\numberwithin{equation}{section}

\let\OLDthebibliography\thebibliography
\renewcommand\thebibliography[1]{
	\OLDthebibliography{#1}
	\setlength{\parskip}{1pt}
	\setlength{\itemsep}{1pt plus 0.3ex}
}

\usepackage[colorlinks=true,linktocpage,pdfpagelabels,
bookmarksnumbered,bookmarksopen]{hyperref}
\definecolor{ForestGreen}{rgb}{0.1,0.6,0.05}
\definecolor{EgyptBlue}{rgb}{0.063,0.1,0.6}
\hypersetup{
	colorlinks=true,
	linkcolor=EgyptBlue,         
	citecolor=ForestGreen,
	urlcolor=olive
}

\usepackage[hyperpageref]{backref}

\allowdisplaybreaks
\usepackage{mathtools}
\mathtoolsset{showonlyrefs}

\usepackage{orcidlink}

\usepackage{accents}

\newcommand{\al} {\alpha}
\newcommand{\pa} {\partial}
\newcommand{\be} {\beta}

\newcommand{\Om} {\Omega}
\newcommand{\la} {\lambda}

\newcommand{\tht} {\theta}

\newcommand{\si} {\sigma}

\newcommand{\noi} {\noindent}
\newcommand{\R}{{\mathbb R}}

\newcommand{\dx}{{\,\rm d}x}

\newcommand{\dsi}{{\,\rm d}\si}

\newcommand{\Omo}{\Om_{\rm{out}}}
\newcommand{\Omi}{\Om_{\rm{in}}}

\newcommand{\ho}{h_{\mathrm{out}}}
\newcommand{\hi}{h_{\mathrm{in}}}

\newtheorem{theorem}{Theorem}[section]
\newtheorem{lemma}[theorem]{Lemma}
\newtheorem{proposition}[theorem]{Proposition}

\theoremstyle{definition}
\newtheorem{definition}[theorem]{Definition}
\newtheorem{remark}[theorem]{Remark}

\title{\textsc{On the Hersch--Weinberger inequality \\in higher dimensions}
}

\author{T.~V.~Anoop, Vladimir~Bobkov, Mrityunjoy~Ghosh, Olga~Pochinka}
\date{}

\AtEndDocument{%
	\par
	\medskip
	\bigskip
	\begin{tabular}{@{}l@{}}%
(T. V.~Anoop)\\[0.2em]
		\textsc{Department of Mathematics, Indian Institute of Technology Madras}\\
		\textsc{Chennai 36, India}\\[0.3em]
		\orcidlink{0000-0002-2470-9140} 0000-0002-2470-9140\\[0.3em]
		\textit{E-mail address}: \texttt{anoop@iitm.ac.in}\\
        [1.5em]
        
		(V.~Bobkov)\\[0.2em]
		\textsc{Institute of Mathematics, Ufa Federal Research Centre, RAS}\\
		\textsc{Chernyshevsky str. 112, 450008 Ufa, Russia}\\[0.3em]
		\orcidlink{0000-0002-4425-0218} 0000-0002-4425-0218\\[0.3em]
		\textit{E-mail address}: \texttt{bobkov@matem.anrb.ru, bobkovve@gmail.com}\\[1.5em]
		
		(M.~Ghosh)\\[0.2em]
		\textsc{Tata Institute of Fundamental Research,
Centre for Applicable Mathematics}\\
		\textsc{Sharadanagar, Bengaluru 560065, India}\\[0.3em]
		\orcidlink{0000-0003-0415-2821} 0000-0003-0415-2821\\[0.3em]
		\textit{E-mail address}: \texttt{ghoshmrityunjoy22@gmail.com}\\
        [1.5em]
        
        (O.~Pochinka)\\[0.2em]
		\textsc{International Laboratory of Dynamical Systems and Applications, HSE University}\\
		\textsc{Bolshaya Pecherckaya str.\ 25/12, 603155 Nizhny Novgorod, Russia}\\[0.3em]
		\orcidlink{0000-0002-6587-5305} 0000-0002-6587-5305\\[0.3em]
		\textit{E-mail address}: \texttt{olga-pochinka@yandex.ru}
\end{tabular}}

\begin{document}
\maketitle
	\vspace*{-5ex}
	\begin{abstract}

		We investigate a reverse Faber--Krahn type inequality for the Robin Laplacian in a bounded smooth domain $\Omega \subset \mathbb{R}^N$ whose boundary has two connected components.  
        We prove that a concentric spherical shell maximizes the first eigenvalue over a class of such domains under perimeter and volume constraints, and under an additional convexity assumption when $N \geq 3$. This result generalizes to a wider class, and extends to higher dimensions, the inequality of \textsc{Hersch} \cite{hersh}, whose approach was substantially based on a construction of the so-called \textit{effectless cut} by \textsc{Weinberger} \cite{wein}, 
        so that we call it the Hersch--Weinberger inequality. Our method is based on the analysis of the gradient flow of the first eigenfunction and several approximation procedures, without relying on the effectless cut itself. The effectless cut being a complicated object related to the attractor of the gradient flow, we describe its most fundamental topological properties. In particular, we show that it does not necessarily have to be a hypersurface.
				
		\par
		\smallskip
		\noindent {\bf  Keywords}: 
	Reverse Faber--Krahn inequality; Robin boundary conditions; Morse functions; effectless cut; gradient flow.
		
		\noindent {\bf MSC2010}: 
		35P05, 
		58K05, 
		35P15. 
	\end{abstract}
	
	\renewcommand{\cftdot}{.}
	\begin{quote}	
		\tableofcontents	
		\addtocontents{toc}{\vspace*{-2ex}}
	\end{quote}

	\section{Introduction}\label{sec:intro}
	
    Let $\Omega \subset \mathbb{R}^N$ be a bounded domain of class $C^{2,\tht},$ for some $\tht \in (0,1)$ and $N \geq 2$, that can be written as
	\begin{equation}\label{def:omega}
	    \Omega = \Omo \setminus \overline{\Omi},
	\end{equation} 
where the domain $\Omi$ is compactly contained in the domain $\Omo$, and the boundaries $\pa \Omi$, $\pa \Omo$ are connected sets. 
Consider the eigenvalue problem 
\begin{align}\tag{$\mathcal{RR}$}\label{eq:D}
\left\{\begin{aligned}
    -\Delta u &= \lambda u \quad\text{in} \quad\Omega,\\
		\frac{\pa u}{\pa \nu}+ \hi u&=0 \quad\;\;\text{on}\quad  \pa \Omi,\\
		\frac{\pa u}{\pa \nu}+ \ho u &=0 \quad\;\; \text{on}\quad  \pa \Omo,
	\end{aligned}
	 \right.
\end{align}
where $\nu$ is the unit outward normal vector to $\pa \Om$ and $\hi,\ho \in [0, +\infty]$.
The case $\hi=+\infty$ or $\ho=+\infty$ corresponds to the zero Dirichlet boundary conditions on $\pa \Omi$ or $\pa \Omo$, respectively. 

The spectral theorem for self-adjoint compact operators ensures that the spectrum of \eqref{eq:D} is discrete and unbounded. Moreover, 
the first eigenvalue $\lambda_1(\Om)$ admits the following variational characterization:
\begin{equation}\label{eq:lambda1}
	\lambda_1(\Om)
=
\inf_{v \in \widetilde{H}^1(\Omega)\setminus\{0\}}
\frac{\int_{\Om}|\nabla v|^2 \dx+\hi \int_{\pa \Omi} v^2 \dsi+\ho \int_{\pa \Omo} v^2 \dsi}{\int_{\Om} v^2 \dx},
	\end{equation}
	where 
     \begin{equation}\label{eq:H1}
 \widetilde{H}^1(\Omega):=\left\{v\in H^1(\Omega): v=0 \text{ on } \partial \Omi \text{ if } \hi=+\infty, \text{ and } v=0 \text{ on } \partial \Omo \text{ if } \ho=+\infty \right\},
 \end{equation}
 and we set $\hi \int_{\pa \Omi} v^2 \dsi := 0$ if $\hi=+\infty$, and $\ho \int_{\pa \Omo} v^2 \dsi := 0$ if $\ho=+\infty$.  
It is clear that $\lambda_1(\Om)>0$ provided at least one Robin parameter is nonzero. 
The infimum is attained, the corresponding minimizer (the first eigenfunction) is unique up to scaling, has a fixed strict sign in $\Omega$, is real-analytic in $\Omega$, and belongs to $C^{2,\tht}(\overline{\Omega})$, see, e.g., \cite[Theorem~6.31]{gilbargtrudinger} for the last fact.

Throughout the text, for convenience of exposition, we adopt the following convention to denote the first eigenvalue. 
We write 
\begin{equation}\label{eq:lllll}
\lambda_1^{\mathcal{RR}}(\Om),~ 
\lambda_1^{\mathcal{NR}}(\Om),~ 
\lambda_1^{\mathcal{RN}}(\Om),~
\end{equation}
where the superscript $\mathcal{RR}$ corresponds to the Robin-Robin case $\hi,\ho \in (0,+\infty]$, $\mathcal{NR}$ to the Neumann-Robin case $\hi=0$, $\ho \in (0,+\infty]$, and $\mathcal{RN}$ to the Robin-Neumann case $\hi \in (0,+\infty]$, $\ho =0$. 
In the historical overview given below, we also use the notation 
$$
\lambda_1^{\mathcal{DN}}(\Om),~
\lambda_1^{\mathcal{ND}}(\Om),~
\lambda_1^{\mathcal{DD}}(\Om),
$$
with the Dirichlet boundary conditions instead of the Robin ones on the respective parts of the boundary. 

By $|\cdot|$ we denote either the Lebesgue measure or the $(N-1)$-dimensional Hausdorff measure of a set, the precise meaning being clear from the context. 
Also, $B_r$ stands for the open ball of radius $r>0$ centered at the origin, unless otherwise explicitly specified. 

\medskip
 In this article, we are interested in establishing an isoperimetric-type inequality of the form
\begin{equation}\label{eq:reverseRR}
		\lambda_1^{\mathcal{RR}}(\Omo \setminus \overline{\Omi})
		\leq
		\lambda_1^{\mathcal{RR}}(B_\beta \setminus \overline{B_\alpha}),
	\end{equation}
    where the choice of the concentric balls $B_\beta$ and $B_\alpha$ is determined by the dimension $N$, the Robin parameters $\hi, \ho \in (0,+\infty]$, and some natural geometric constraints. 
    This inequality will be referred to as the \textit{Hersch--Weinberger inequality}. 

In the planar case $N=2$, for the Neumann-Dirichlet boundary conditions ($\hi=0$ and $ \ho=+\infty$), \textsc{Payne \& Weinberger} \cite{PayneWein1961} proved the inequality
	\begin{equation}\label{eq:reverseND}
		\lambda_1^{\mathcal{ND}}(\Omo \setminus \overline{\Omi})
		\leq
		\lambda_1^{\mathcal{ND}}(B_\beta \setminus \overline{B_\alpha}),
	\end{equation}
	where $B_\beta$ and $B_\alpha$ satisfy 
  \begin{equation}\label{eq:reverseFK2d-assumption-ND}
	|\partial B_\beta| = |\partial \Omo|,
	\quad 
	|\Omo \setminus \overline{\Omi}| 
    = |B_\beta \setminus \overline{B_\alpha}|,
	\end{equation}	
    and they also covered the Neumann-Robin case. 
    Later, \textsc{Hersch} \cite{hersh} provided a different argument for \eqref{eq:reverseND} and proved a similar inequality for the Dirichlet-Neumann boundary conditions ($\hi=+\infty$ and $ \ho=0$), namely, 
    \begin{equation}\label{eq:reverseDN}
		\lambda_1^{\mathcal{DN}}(\Omo \setminus \overline{\Omi})
		\leq
		\lambda_1^{\mathcal{DN}}(B_\beta \setminus \overline{B_\alpha}),
	\end{equation}
    for $B_\beta$ and $B_\alpha$ satisfying 
    	\begin{equation}\label{eq:reverseFK2d-assumption-DN}
    |\partial B_\alpha| = |\partial \Omi|, 
	\quad 
	|\Omo \setminus \overline{\Omi}| 
    = |B_\beta \setminus \overline{B_\alpha}|.
	\end{equation}	
    In order to prove these inequalities, both \textsc{Payne \& Weinberger} and \textsc{Hersch} combined the interior parallel method with Nagy's type inequalities (see \cite{Nagy1959}). 

In the planar case, for the Dirichlet-Dirichlet boundary conditions ($\hi=\ho=+\infty$), to derive an isoperimetric inequality of the stated form \eqref{eq:reverseRR}, one might expect  $B_\beta$ and $B_\alpha$ to satisfy both \eqref{eq:reverseFK2d-assumption-ND} and \eqref{eq:reverseFK2d-assumption-DN}, i.e.,
\begin{equation}\label{eq:reverseFK2d-assumption}
    |\partial B_\alpha| = |\partial \Omi|, 
	\quad
	|\partial B_\beta| = |\partial \Omo|,
	\quad 
	|\Omo \setminus \overline{\Omi}| 
    = |B_\beta \setminus \overline{B_\alpha}|.
	\end{equation}
Notice that since  $B_\beta$ and $B_\alpha$ necessarily satisfy the identity 
$$
|\partial B_\beta|^2_1-|\partial B_\alpha|^2_1 = 4\pi |B_\beta \setminus \overline{B_\alpha}|,
$$ 
the domain $\Omega=\Omo \setminus \overline{\Omi}$ satisfies \eqref{eq:reverseFK2d-assumption} if and only if the following isoperimetric constraint holds:
	\begin{equation}\label{eq:reverseFK2d-assumption0}
	|\partial \Omo|^2-|\partial \Omi|^2
	= 
	4\pi |\Omo \setminus \overline{\Omi}|.
	\end{equation}
    In fact, for such a domain $\Omega=\Omo \setminus \overline{\Omi}$, \textsc{Hersch} \cite{hersh} established the inequality
    \begin{equation}\label{eq:reverseDD}
		\lambda_1^{\mathcal{DD}}(\Omo \setminus \overline{\Omi})
		\leq
		\lambda_1^{\mathcal{DD}}(B_\beta \setminus \overline{B_\alpha}).
	\end{equation}
    The concept of an ``\textit{effectless cut}'', introduced by \textsc{Weinberger} \cite{wein}, forms the basis of \textsc{Hersch}'s proof. 
    For this reason, we call \eqref{eq:reverseDD} and, more generally, \eqref{eq:reverseRR}, \textit{the Hersch--Weinberger inequality}. 
    
    In two dimensions, the effectless cut is a closed curve $\widetilde{\gamma}$ within \(\Omega\) that surrounds \(\Omi\) and divides \(\Omega\) into two doubly connected subdomains, \(\Omega_1\) (inner) and \(\Omega_2\) (outer), without affecting the first eigenvalue of the Laplacian (with mixed boundary conditions) on either \(\Omega_1\) or \(\Omega_2\), see Figure~\ref{fig:cut}. 
    More precisely, we have the identity
$$
\lambda_1^{\mathcal{DN}}(\Omega_1) = \lambda_1^{\mathcal{ND}}(\Omega_2) = \lambda_1^{\mathcal{DD}}(\Omega),
$$
which explains the term ``effectless cut''. 
\textsc{Hersch} then combined the inequalities \eqref{eq:reverseND} for $\Omega_2$ and \eqref{eq:reverseDN} for $\Omega_1$  to conclude the desired inequality \eqref{eq:reverseDD}. However, the rigorous implementation of this approach requires the effectless cut $\widetilde{\gamma}$ to be sufficiently regular in order for the eigenvalue problem on the subdomains to be well-posed. 
This subtle yet important issue was not discussed in \cite{hersh,wein}, see also \cite[Theorem~3.5.3]{henrot2006extremum}.
In the recent work \cite{ABG2}, we revised \cite{hersh,wein}, investigated the regularity of the effectless cut, introduced its regularized version,  
and generalized the inequality \eqref{eq:reverseDD} to the Robin-Robin case in which both Robin parameters can be positive or negative.

\begin{figure}[!ht]
    \centering
\includegraphics[width=0.4\linewidth]{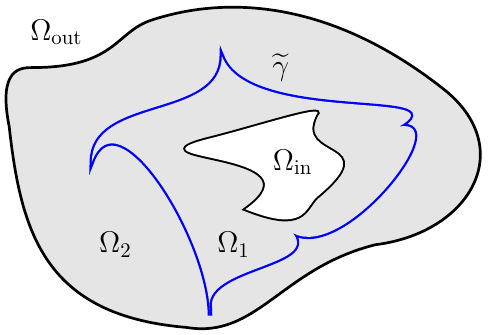}
\captionsetup{width=0.75\textwidth}
    \caption{The subdomains $\Om_1$ and $\Om_2$. Here, $\widetilde{\gamma}$ represents the \textit{effectless cut}.}
    \label{fig:cut}
\end{figure} 
 
In higher dimensions $N \geq 3$, in the Neumann-Dirichlet case, the reverse Faber--Krahn inequality \eqref{eq:reverseND} was established in \cite{AnoopAshok2020}, assuming that \(\Omo\) is a ball and $\Omo \setminus \overline{\Omi}$ satisfies an isoperimetric constraint analogous to \eqref{eq:reverseFK2d-assumption-ND}, namely,
	\begin{equation}\label{eq:reverseFK-assumption-ND}
    |\partial B_\beta| = |\partial \Omo|, 
	\quad 
	|\Omo \setminus \overline{\Omi}| 
    = |B_\beta \setminus \overline{B_\alpha}|.
	\end{equation}
 This result was further extended to the Neumann-Robin case in \cite[Theorem 3.1]{PaoliPiscitelli}, with a weaker assumption that \(\Omo\) is convex together with \eqref{eq:reverseFK-assumption-ND}. 
 In \cite[Theorem~1.2]{AnoopMrityunjoy}, a different proof of this result was provided for the Neumann-Dirichlet case, using a higher-dimensional analogue of Nagy's inequality. 
 For the Dirichlet-Neumann case, \eqref{eq:reverseDN} was established in \cite{AnoopAshok2020} again under the assumption that \(\Omi\) is a ball together with the following isoperimetric constraint analogous to \eqref{eq:reverseFK2d-assumption-DN}:
 	\begin{equation}\label{eq:reverseFK-assumption-DN0}
    |\partial B_\alpha| = |\partial \Omi|, 
	\quad 
	|\Omo \setminus \overline{\Omi}| 
    = |B_\beta \setminus \overline{B_\alpha}|.
	\end{equation}	
 However, to the best of our knowledge, the validity of \eqref{eq:reverseDN} remains unknown 
 for more general \(\Omi\) satisfying the constraint \eqref{eq:reverseFK-assumption-DN0}.
 
 In \cite{DellaPiscitelli}, the authors assumed that $\Omi$ is convex and proposed another isoperimetric constraint in place of \eqref{eq:reverseFK2d-assumption-DN}:
 \begin{equation}\label{eq:reverseFK-assumption-DN}
    W_{N-1}(B_\al)=W_{N-1}(\Omi), 
	\quad 
	|\Omo \setminus \overline{\Omi}|
    = |B_\beta \setminus \overline{B_\alpha}|,
	\end{equation}
where $W_{N-1}(E)$ is the $(N-1)$-th quermassintegral of a convex domain $E$, see, e.g., 
\cite[Chapter~5]{Schneider} or Section~\ref{sec:K} below for more details. 
    Notice that, for $N=2$, $W_1(\Omega) = |\partial \Omega|/2$, and hence both isoperimetric constraints \eqref{eq:reverseFK2d-assumption-DN} and \eqref{eq:reverseFK-assumption-DN} are equivalent. 
    On the other hand, for $N\ge 3$, the constraints \eqref{eq:reverseFK-assumption-DN0}  and \eqref{eq:reverseFK-assumption-DN} yield, respectively, two concentric spherical shells $\mathcal{A}_1$ and $\mathcal{A}_2$ such that $\lambda_1^{\mathcal{DN}}(\mathcal{A}_1)<\lambda_1^{\mathcal{DN}}(\mathcal{A}_2)$; see Remark~\ref{rem:perimeter_constraint} below for more details. 
    In \cite[Theorem~1.1]{DellaPiscitelli}, the authors established \eqref{eq:reverseRR} for the Robin-Neumann boundary conditions. 
    A different proof for the Dirichlet-Neumann case was proposed in \cite[Theorem~1.6]{AnoopMrityunjoy}. 
    
    By contrast, higher-dimensional versions of the Hersch--Weinberger inequality \eqref{eq:reverseRR} were known only for rather narrow classes of domains. 
    For instance, see \cite{ABS2018,HKK2001,Kesavan2003} for the class of eccentric spherical shells.
    A higher-dimensional generalization of \eqref{eq:reverseRR} in the Dirichlet-Robin case was claimed in \cite[Theorem~1.2]{GP}, assuming that $\Omo$ and $\Omi$ are convex and that there exist $\alpha, \beta >0 $ such that
    \begin{equation}\label{eq:reverseFK-assumption}
	W_{N-1}(B_\alpha) = W_{N-1}(\Omi),
	\quad
    |\partial B_\beta| = |\partial \Omo|,
	\quad 
	|\Omo \setminus \overline{\Omi}| = |B_\beta \setminus \overline{B_\alpha}|.
	\end{equation} 
    Observe that \eqref{eq:reverseFK-assumption} is equivalent to \eqref{eq:reverseFK2d-assumption} for $N=2$. 
	The approach of \cite{GP} is based on the web-function method, which constructs a suitable test function defined by means of parallel sets (see, for instance, \cite{hersh, AnoopMrityunjoy, AnoopAshok2020, PayneWein1961, PaoliPiscitelli}). 
The authors specifically aim to construct an appropriate test function using the distance functions to both the inner and outer boundaries. However, the web function constructed in the proof of \cite[Theorem~1.2]{GP} may exhibit a jump discontinuity and therefore does not generally belong to the space \( \widetilde{H}^1(\Omega) \). 
Recently, the inequality \eqref{eq:reverseRR} has been claimed in \cite{AmatoGavitone} for domains with axial symmetry and a convexity assumption, satisfying the constraints \eqref{eq:reverseFK-assumption}. 
The arguments of \cite{AmatoGavitone} pursue a generalization of the effectless cut approach from \cite{hersh,wein} and \cite{ABG2} to the axially symmetric higher-dimensional setting. 

In the present work, we establish the Hersch--Weinberger inequality \eqref{eq:reverseRR} for a more general class of domains in arbitrary dimension. 
For $N \geq 2$ and $\beta > \alpha>0$, we consider the class of domains $\mathcal{K}_{\alpha,\beta}^N$ defined as follows: 
    \begin{align}
    \mathcal{K}_{\alpha,\beta}^2
    = 
    \Big\{
    &\Omega = \Omo \setminus \overline{\Omi} \subset \mathbb{R}^2:~\overline{\Omi}\subset \Omo
    ~\text{and}
    \\
    &|\partial B_\alpha| = |\partial \Omi|,~~ 
	|\partial B_\beta| = |\partial \Omo|,~~
	|\Omega| \geq  |B_\beta \setminus \overline{B_\alpha}|
    \Big\},
    \end{align}
    and, for $N \geq 3$,
    \begin{align}
    \mathcal{K}_{\alpha,\beta}^N
    = 
    \Big\{
    &\Omega = \Omo \setminus \overline{\Omi} \subset \mathbb{R}^N:~\overline{\Omi}\subset \Omo,
    ~\text{$\Omi,\Omo$ are convex, and}
    \\
    &W_{N-1}(B_\alpha) = W_{N-1}(\Omi),~~ 
	|\partial B_\beta| = |\partial \Omo|,~~
	|\Omega| \geq  |B_\beta \setminus \overline{B_\alpha}|
    \Big\}.
    \end{align}
    Recall that we assume, by default, that any $\Omega$ is $C^{2,\tht}$-regular. 
    A few points are worth mentioning:
    \begin{enumerate}[label={\rm(\roman*)}]
        \item For each $\Omega,$ there is at most one choice of $\beta>\alpha>0$ such that $\Omega \in \mathcal{K}_{\alpha,\beta}^N$. 
        \item If $\Omega \in \mathcal{K}_{\alpha,\beta}^N$, then $\Omega$ is homeomorphic to the spherical shell $B_\beta \setminus \overline{B_\alpha}$. 
    \end{enumerate}
    Clearly, $B_\beta \setminus \overline{B_\alpha} \in \mathcal{K}_{\alpha,\beta}^N$. 
    Further examples of domains in $\mathcal{K}_{\alpha,\beta}^N$ and additional information on this class will be discussed in Section~\ref{sec:K}. 

    \smallskip
    We now state our main result. 
    \begin{theorem}\label{thm:FK}
	Let $N \geq 2$ and $\hi,\ho \in (0,+\infty]$.
	Let $\beta>\alpha>0$.
	   If $\Omega \in \mathcal{K}_{\alpha,\beta}^N$, then 
       \begin{equation}\label{eq:reverseFK}
		\lambda_1^{\mathcal{RR}}(\Omo \setminus \overline{\Omi})
		\leq
		\lambda_1^{\mathcal{RR}}(B_\beta \setminus \overline{B_\alpha}).
	\end{equation}
    \end{theorem}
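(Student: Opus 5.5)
Let $u>0$ be the first eigenfunction of \eqref{eq:D} on $\Omega$ and $\lambda=\lambda_1^{\mathcal{RR}}(\Omega)$. The idea is to emulate Hersch's effectless cut without constructing it as a hypersurface. We use the gradient flow of $u$. After a small perturbation (approximating $u$ by a Morse function, or approximating $\Omega$ by domains on which $u$ is Morse), almost every point of $\Omega$ flows along $\nabla u$ either to $\pa\Omi$ or to $\pa\Omo$ in backward time. The exceptional set is the attractor-related set $\Gamma$ (closure of stable manifolds of critical points), of measure zero.

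This splits $\Omega\setminus\Gamma$ into open sets $\Omega_1$ and $\Omega_2$, consisting of the points whose backward trajectories reach $\pa\Omi$ and $\pa\Omo$, respectively. Since $\nabla u$ is tangent to $\Gamma$ (or $\Gamma$ is negligible), integrating $-\Delta u=\lambda u$ by parts on each piece gives $\int_{\Omega_i}|\nabla u|^2-\lambda\int_{\Omega_i}u^2+(\text{Robin term on its own boundary})=0$ with no flux through $\Gamma$. In this sense $u|_{\Omega_1}$ is an admissible Robin--Neumann test function and $u|_{\Omega_2}$ a Neumann--Robin one, both with Rayleigh quotient $\lambda$. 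The rigorous justification is a cutoff near $\Gamma$ plus an approximation argument.

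The next step is to compare each piece with a shell via the known one-sided inequalities. For $\Omega_2$ we use a Payne--Weinberger/Paoli--Piscitelli type argument with the parallel sets of $\pa\Omo$. The web function built from the distance to $\pa\Omo$ and the radial Neumann--Robin eigenfunction on $B_\beta\setminus\overline{B_{\gamma}}$, with $|B_\beta\setminus \overline{B_\gamma}|=|\Omega_2|$, shows, via Nagy-type inequalities (using convexity of $\Omo$ if $N\ge3$), that
\[
\lambda\le \lambda_1^{\mathcal{NR}}(B_\beta\setminus\overline{B_{\gamma}}).
\]
For $\Omega_1$ we argue similarly with parallel sets of $\pa\Omi$ and quermassintegral monotonicity, as in \cite{DellaPiscitelli}, getting
\[
\lambda\le \lambda_1^{\mathcal{RN}}(B_{\delta}\setminus\overline{B_\alpha}),
\qquad |B_\delta\setminus \overline{B_\alpha}|=|\Omega_1|.
\]
These inequalities must be proved directly for the nonsmooth sets $\Omega_1,\Omega_2$. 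To do this, we transplant $u$ itself, or equivalently test with a rearrangement along level sets of the distance functions, rather than invoking the theorems as black boxes.

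To conclude, note that $|\Omega_1|+|\Omega_2|=|\Omega|\ge|B_\beta\setminus\overline{B_\alpha}|$, so $\delta\ge\gamma$ after possibly enlarging. Let $\rho\in(\alpha,\beta)$ be the effectless radius of the shell, where the radial eigenfunction $U$ of $B_\beta\setminus\overline{B_\alpha}$ attains its maximum. Then $\lambda_1^{\mathcal{RR}}(B_\beta\setminus\overline{B_\alpha})=\lambda_1^{\mathcal{RN}}(B_\rho\setminus\overline{B_\alpha})=\lambda_1^{\mathcal{NR}}(B_\beta\setminus\overline{B_\rho})$. Both functions $r\mapsto\lambda_1^{\mathcal{RN}}(B_r\setminus\overline{B_\alpha})$ and $r\mapsto\lambda_1^{\mathcal{NR}}(B_\beta\setminus\overline{B_r})$ are monotone, decreasing in $r$ and increasing in $r$ respectively. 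Hence either $\delta\ge\rho$ or $\gamma\le\rho$, and in either case $\lambda\le\lambda_1^{\mathcal{RR}}(B_\beta\setminus\overline{B_\alpha})$.

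The main obstacle is the first step. It requires making the flow decomposition rigorous: the generic Morse approximation, the claim that the separating set carries no flux, and the validity of the parallel-set/Nagy estimates on the irregular pieces. Since $\Gamma$ need not be a hypersurface, I would avoid defining eigenvalues on $\Omega_i$. Instead I would work with test functions on the shells built from $u$'s distribution on $\Omega_i$, and pass to limits in the approximations.
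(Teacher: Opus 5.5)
The overall architecture matches the paper: split $\Omega$ by the gradient flow into a piece attached to $\pa\Omi$ and a piece attached to $\pa\Omo$, apply the one-sided reverse Faber--Krahn inequalities on each piece, and close with the volume constraint and the monotonicity of the mixed shell eigenvalues. Your final paragraph is correct, given the two inequalities.

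\textbf{The gap.} The step linking $\lambda$ to the pieces points the wrong way. Knowing that $u|_{\Omega_1}$ is an admissible Robin--Neumann function with Rayleigh quotient $\lambda$ gives only $\lambda_1^{\mathcal{RN}}(\Omega_1)\le\lambda$. What you need is the opposite, $\lambda\le\lambda_1^{\mathcal{RN}}(\Omega_1)$. That is, $\lambda$ must bound from below the Rayleigh quotient on $\Omega_1$ of every admissible function, in particular of the web function built from the shell eigenfunction. Without this, your web-function estimate yields only $\lambda_1^{\mathcal{NR}}(\Omega_2)\le\lambda_1^{\mathcal{NR}}(B_\beta\setminus\overline{B_\gamma})$, and nothing connects it to $\lambda$. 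Your proposed fix, transplanting $u$ onto the shell, cannot supply the missing direction. A test function on the shell bounds the shell's eigenvalue from above, which is the Faber--Krahn direction, not the reverse one.

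\textbf{What the paper does instead.} The missing ingredient is a Barta/Picone-type comparison. It uses the positivity of $u$ together with the sign of its flux through the cut: one needs $\partial_\nu u\ge 0$ there for the outward normal of the piece, not merely that $u$ be a test function. The paper carries this out on smooth pieces rather than on the irregular $\Omega_i$. It takes $\Omega_t^1=\bigcup_{s\in(t,0)}\phi(s,\pa\Omi)$ and $\Omega_t^2$ defined analogously. These sets are $C^{1,\theta}$, and $\partial u_n/\partial\nu>0$ on their inner boundaries $E_t^i$ by transversality of the flow. Green's formula against the positive first eigenfunction $v_t$ of $\Omega_t^1$ then gives $(\lambda_1^{\mathcal{RN}}(\Omega_t^1,V_n)-\lambda)\int_{\Omega_t^1}v_t u_n\dx=\int_{E_t^1}\partial_\nu u_n\, v_t\dsi>0$.

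Because these pieces are smooth, \eqref{eq:RFK-ND} and \eqref{eq:RFK-RN} apply as black boxes, and no estimates on irregular sets are needed. The cost is that $|\Omega_t^1|+|\Omega_t^2|\to|\Omega|$ holds only as $t\to-\infty$, so the volume step becomes a limit.

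\textbf{Two further points you leave vague.}
First, the Morse approximant must be an exact first eigenfunction with the same eigenvalue. The paper uses $u_n$ solving the problem with potential $V_n$, $\lambda_1(\Omega,V_n)=\lambda_1(\Omega)$ and $\|V_n\|_\infty\to0$. A generic Morse approximation of $u$ loses the equation and the boundary conditions, and generic domain perturbation is not available for Robin conditions.
Second, the approximant must have no interior minima, which the paper ensures via $\Delta u_n<0$ near the critical set. Otherwise the basins of minima have positive measure, and your exceptional set is not negligible.
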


    In Section~\ref{sec:counterexample}, we show that \eqref{eq:reverseFK} can be reversed for domains $\Omega=\Omo \setminus \overline{\Omi}$ satisfying
    \begin{equation}\label{eq:counterexampl1}
	W_{N-1}(B_\alpha) = W_{N-1}(\Omi),
	\quad
    |\partial B_\beta| = |\partial \Omo|,
	\quad 
	|\Omo \setminus \overline{\Omi}| < |B_\beta \setminus \overline{B_\alpha}|,
    \end{equation}
    and such that $\Omi$, $\Omo$ are convex. 
    This indicates that Theorem~\ref{thm:FK} is, in this sense, optimal.
    
      \medskip
    Our method of the proof of Theorem~\ref{thm:FK} is different from that in \cite{hersh,wein}, although it has a similar underlying idea of analyzing the gradient flow associated with the first eigenfunction.  
    Recall that the key ingredient in \textsc{Hersch}'s work \cite{hersh} is the \textit{effectless cut} constructed by \textsc{Weinberger} \cite{wein} via such a gradient flow, specifically in the planar case \(N=2\). 
     The approach of \cite{hersh,wein} was recently revised in \cite{ABG2}. 
    However, the arguments of \cite{ABG2,hersh,wein} are difficult to translate directly to the higher-dimensional case $N \geq 3$. 
    This is mainly due to the fact that a precise analogue of the effectless cut in higher dimensions can be much more ``wild'' than in the planar case. 
    Namely, it might not be a hypersurface even when the first eigenfunction is a Morse--Smale function, as we show in Section~\ref{sec:surface}.    
    Consequently, the analysis of the well-posedness of the eigenvalue problem \eqref{eq:D} on the corresponding subdomains is a highly nontrivial (yet very interesting) issue. 
   
    In the present work, we overcome these regularity difficulties using two approximation procedures. 
    We approximate the first eigenfunction $u$ of \eqref{eq:D} by the first eigenfunctions of the problem with a potential (see Proposition~\ref{prop:approx}), such that the approximate eigenfunctions are Morse functions.  
    Then, instead of investigating the effectless cut itself, we analyze the behavior of the boundaries $\pa\Omi$, $\pa\Omo$ under the gradient flow of these Morse eigenfunctions (see Section~\ref{sec:approx:cut}) and study the behavior of Robin-Neumann and Neumann-Robin eigenvalues in the corresponding subdomains. 
    Eventually, passing to the limit along both approximations, we derive the Hersch--Weinberger inequality. 
    This method provides a robust framework that avoids the consideration of the regularity of the effectless cut. 
    Our approach works in any dimension $N \geq 2$, thus also providing a new proof of the Hersch--Weinberger inequality \eqref{eq:reverseFK} in the planar case, cf.~\cite[Theorem~1.2]{ABG2} and  \cite{hersh}. 

\medskip
The article is organized as follows. 
In Section~\ref{sec:K}, we recall the notion of quermassintegrals and discuss instances of domains belonging to the admissible class $\mathcal{K}_{\alpha,\beta}^N$ for Theorem~\ref{thm:FK}. 
An example of a domain satisfying the constraints \eqref{eq:counterexampl1} but violating the Hersch--Weinberger inequality is given in Section~\ref{sec:counterexample}. 
Section~\ref{sec:proof_FK} is devoted to the proof of Theorem~\ref{thm:FK}. 
In Section~\ref{sec:surface}, we propose a definition of the effectless cut in higher dimensions and discuss its most fundamental topological properties. In particular, we show that the effectless cut does not have to be a hypersurface.
Finally, we provide a few concluding remarks in Section~\ref{sec:final_remarks}.

\section{On the admissible class of domains
}\label{sec:K}

We start by recalling the notion of quermassintegrals of a bounded convex domain $E \subset \mathbb{R}^N$, see \cite{HugWein,Schneider}. 
While this notion is defined classically for \textit{compact} convex sets, it works equally well for the convex domain $E$ since $\partial E = \partial \overline{E}$. 
For $\delta>0$, denote by $E_\delta$ the $\delta$-neighborhood of $E$, also known as the \textit{outer parallel body} of $E$:
\begin{equation}\label{eq:Kdelta}
E_\delta
=
E + \delta B_1
=
\{
x \in \mathbb{R}^N:~ \mathrm{dist}(x,E) < \delta
\}.
\end{equation}
Here, the sum of the sets is the Minkowski sum.
The \textit{Steiner formula} (see, e.g., \cite[Section~4.1, Eq.~(4.1)]{Schneider}) asserts that the measure of $E_\delta$ can be expressed as a polynomial of degree $N$ in the variable $\delta$:
\begin{equation}\label{eq:steinerRN}
|E_\delta|
=
\sum_{i=0}^{N} \binom{N}{i} W_i(E) \,\delta^i,
\end{equation}
where the coefficients $W_i(E)$ are called the \textit{quermassintegrals} of $E$. 
It is known that
\begin{equation}\label{eq:WWW}
W_0(E) = |E|, 
\quad 
W_1(E) = \frac{1}{N} \, |\partial E|,
\quad
W_N(E) = |B_1|.
\end{equation}
In particular, for $N=2$, \eqref{eq:steinerRN} gives 
\begin{equation}\label{eq:steiner2D}
|E_\delta| = |E| + |\partial E| \delta + \pi \delta^2. 
\end{equation}
There are several ways to characterize $W_i(E)$ with $i \in \{2,\dots,N-1\}$, but they are generally not as simple as those in \eqref{eq:WWW}, see, e.g., \cite[Section~5.3.1]{Schneider}. 
Let us mention that if $E$ is a ball $B_r$ in $\mathbb{R}^N$ of radius $r>0$, then (see \cite[Eq.~(13.46)]{santalo}) 
\begin{equation}\label{eq:quer-ball}
W_{N-1}(B_r) = |B|_N \, r.
\end{equation}

We also recall the following \textit{Alexandrov--Fenchel inequalities} for quermassintegrals (see, e.g., \cite[Section~7.4, (7.67)]{Schneider}):
\begin{equation}
 	\label{Alexandrov_Fenchel}
 	\left(\frac{W_j(E)}{|B_1|}\right)^{\frac{1}{N-j}}\geq \left(\frac{W_i(E)}{|B_1|}\right)^{\frac{1}{N-i}} \quad 
    \text{for}~0\leq i<j<N,
 \end{equation}
where equality holds for some $i$ and $j$ if and only if $E$ is a ball. 
In particular, for $i=0$ and $j=1$, we get from \eqref{eq:WWW} and \eqref{Alexandrov_Fenchel} the classical isoperimetric inequality 
\begin{equation}\label{eq:isoperimetric-classic}
    |\partial E|\ge N|B_1|^\frac{1}{N} |E|^{\frac{N-1}{N}}.
\end{equation}

\subsection{Members of \texorpdfstring{$\mathcal{K}_{\alpha,\beta}^N$}{K}}
\label{section:members}
Let us discuss examples of domains belonging to $\mathcal{K}_{\alpha,\beta}^N$. 
It is obvious that spherical shells bounded by two balls of radii $\alpha < \beta$, not necessarily concentric, belong to $\mathcal{K}_{\alpha,\beta}^N$.
We now turn to less trivial examples. 

\noi \underline{\bf $\Omo$ is a ball:} Fix $\beta>0$ and consider any domain of the form $B_\beta \setminus \overline{\Omi}$, where 
    ${\Omega}_{\mathrm{in}}$ is convex with $\overline{\Omi} \subset B_\beta$ (see Figure \ref{fig:example}-(a)). Then there exists $\alpha \in (0,\beta)$ (see Remark~\ref{rem:convex} below) such that  $W_{N-1}(B_\alpha) = W_{N-1}(\Omi)$.
   Now, the Alexandrov--Fenchel inequality \eqref{Alexandrov_Fenchel} for $i=0$ and $j=N-1$, together with equality for balls, yields
    $$
    |B_\alpha| \geq |\Omi|,
    $$
    see, e.g., \cite[Proposition 3.4]{AnoopMrityunjoy}. 
    Therefore, we obtain
    \begin{equation}\label{eq:BOmineq1}
    |B_\beta \setminus \overline{\Omi}| 
    =
    |B_\beta| -|\Omi|
    \geq 
    |B_\beta| -|B_\alpha|
    =  
    |B_\beta \setminus \overline{B_\alpha}|,
    \end{equation}
    which implies that $B_\beta \setminus \overline{\Omi} \in \mathcal{K}_{\alpha,\beta}^N$. 
    Note that equality holds in \eqref{eq:BOmineq1} if and only if $\Omi = B_\alpha$, where $B_\alpha$ and $B_\beta$ need not be concentric. 
    Thus, for every convex domain $\Omi$ satisfying $\overline{\Omi} \subset B_\beta$, the domain $B_\beta \setminus \overline{\Omi}$ belongs to $\mathcal{K}_{\alpha,\beta}^N$.

\begin{figure}[t]
\centering
  \begin{subfigure}[c]{0.3\textwidth}
    \includegraphics[width=\linewidth]{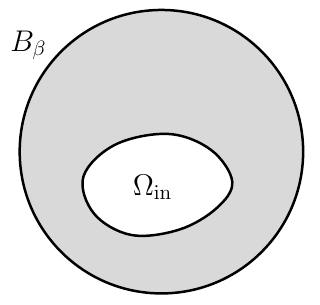}
    \caption{$B_\beta\setminus \overline{\Omi}$} \label{fig:2a}
  \end{subfigure}%
  \hspace{0.22\textwidth}
  \begin{subfigure}[c]{0.37\textwidth}
    \includegraphics[width=\linewidth]{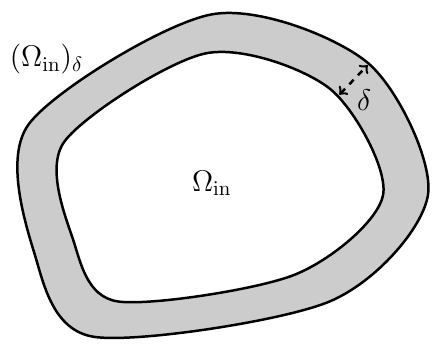}
    \caption{$(\Omi)_\delta\setminus \overline{\Omi}$} \label{fig:2b}
  \end{subfigure}
\caption{Examples of domains in $\mathcal{K}_{\alpha,\beta}^N$.} 
\label{fig:example}
\end{figure}

\noi \underline{\bf $\Omi$ is a ball:}
Let us take some $\alpha>0$ and consider a domain of the form $\Omo \setminus \overline{B_\alpha}$, where $\Omo$ is not necessarily convex and satisfies $\overline{B_\alpha} \subset \Omo$ and $|\partial B_\beta| = |\partial \Omo|$ for some $\beta > \alpha$. 
The isoperimetric inequality \eqref{eq:isoperimetric-classic} gives 
$|B_\beta| \geq |\Omo|$, 
where equality holds if and only if $\Omo = B_\beta$, 
which implies that
    \begin{equation}\label{eq:badK}
    |\Omo \setminus \overline{B_\alpha}| 
    =
    |\Omo| -|B_\alpha|
    \leq 
    |B_\beta| -|B_\alpha|
    =  
    |B_\beta \setminus \overline{B_\alpha}|.
    \end{equation} 
Thus, a domain of the form $\Omo \setminus \overline{B_\alpha}$ belongs to $\mathcal{K}_{\alpha,\beta}^N$ if and only if $\Omo = B_\beta$, where $B_\alpha$ and $B_\beta$ can be eccentric. 
\begin{remark}\label{rem:convex}
For $N=2$, one can easily find $\Omi$ and $\Omo$ such that $\overline{\Omi}\subset \Omo$
and $|\partial \Omi| \geq |\partial \Omo|$. 
In this case, we have $\alpha \geq \beta$, which means that such $\Omega$ does not belong to $\mathcal{K}_{\alpha,\beta}^2$. 
On the other hand, for general $N \geq 2$, 
if $\Omi$ and $\Omo$ are \textit{convex} and $\overline{\Omi}\subset\Omo$, then from \cite[Definition~3.5 and Theorem~3.9 (c), (g)]{HugWein} we have $W_{N-1}(\Omi)<W_{N-1}(\Omo)$. In particular, if $\overline{\Omi} \subset B_\beta$, then by \eqref{eq:quer-ball} there exists $\alpha \in (0,\beta)$ such that $W_{N-1}(B_\alpha) = W_{N-1}(\Omi)$.
\end{remark}

\begin{remark}
Clearly, if $\Omega$ belongs to $\mathcal{K}_{\alpha,\beta}^N$, then any translation and orthogonal transformation of $\Omo$ or $\Omi$ 
preserving the positive distance between connected components of the boundary keep the modified domain in the class $\mathcal{K}_{\alpha,\beta}^N$. 
\end{remark}

In the planar case $N=2$, other examples of domains in ${\mathcal{K}}_{\alpha,\beta}^2$ can be easily constructed (see, for instance, \cite[Section~3]{DNT2022}). 
Indeed, taking any bounded convex domain $\Omi$, fixing some $\delta>0$, and considering the $\delta$-neighborhood $(\Omi)_\delta$ of $\Omi$,
the Steiner formula \eqref{eq:steiner2D} and its derivative 
give the following expressions for the measure and perimeter of $(\Omi)_\delta$: 
$$
|(\Omi)_\delta| = |\Omi| + |\partial \Omi| \delta + \pi \delta^2, 
\quad 
|\partial (\Omi)_\delta| = |\partial \Omi| + 2 \pi \delta.
$$
It is not hard to deduce from these identities that $(\Omi)_\delta \setminus \overline{\Omi} \in {\mathcal{K}}_{\alpha,\beta}^2$ (see Figure \ref{fig:example}-(b)), where $\alpha>0$ is such that $|\partial B_\alpha| = |\partial \Omi|$ and $\beta = \alpha+ \delta$. 

In the higher-dimensional case \(N \geq 3\), a similar argument based on the Steiner formula \eqref{eq:steinerRN} does not seem to directly yield nontrivial examples of domains in \({\mathcal{K}}_{\alpha,\beta}^N\). 
This is primarily due to the challenge of computing \(W_{N-1}\). 
However, a way of constructing such domains is indicated in \cite{AmatoGavitone}. 
Namely, let us take any convex domains $\Omi$, $\Omo$, which are not balls, and take two balls $B_r$, $B_R$ such that
$$
\overline{B_r} \subset \Omi, 
\quad 
\overline{\Omi} \subset \Omo, 
\quad 
\overline{\Omo} \subset B_R. 
$$
Additionally assume that, for every $a, b \in [0,1]$, the domains
$$
\Gamma_{\text{out}}(a)
:=
a B_R + (1-a) \Omo,
\quad 
\Gamma_{\text{in}}(b)
:=
b B_r + (1-b) \Omi
$$
satisfy $\overline{\Gamma_{\text{in}}(b)} \subset \Gamma_{\text{out}}(a)$. 
It then follows as in \cite[Section~3]{AmatoGavitone} that there exist $\bar{a}, \bar{b} \in (0,1)$ such that $\Gamma_{\text{out}}(\bar{a}) \setminus \overline{\Gamma_{\text{in}}(\bar{b})} \in \mathcal{K}_{\alpha,\beta}^N$ for some $\alpha<\beta$. 

\subsection{Counterexample to Theorem~\ref{thm:FK} outside \texorpdfstring{$\mathcal{K}_{\alpha,\beta}^N$}{K}}\label{sec:counterexample}

Take any $N \geq 2$ and $\alpha \in (0,1)$.
Consider the domain $\Omega_k = R_k \setminus \overline{B_\alpha}$, where $R_k = (-1,1)^{N-1} \times (-k,k)$ is the rectangular parallelepiped with $k>1$, see Figure~\ref{fig:Om1}. Clearly, we have $\overline{B_\alpha} \subset  R_k$. 
Let $\beta_k>0$ be such that 
\begin{equation}\label{eq:pbprx}
|\partial B_{\beta_k}| = |\partial R_k|.
\end{equation}
As discussed in Section~\ref{sec:K} (see \eqref{eq:badK}), we always have
$$
|R_k \setminus \overline{B_\alpha}| < |B_{\beta_k} \setminus \overline{B_\alpha}|.
$$
That is, $\Omega_k$ does not belong to the class $\mathcal{K}_{\alpha,\beta_k}^N$, but it does satisfy the quermassintegral and perimeter constraints appearing in the definition of $\mathcal{K}_{\alpha,\beta_k}^N$. 

For any $k>1$, there exists $x_k \in \mathbb{R}^N$ such that the ball $B_{r_k}(x_k)$ of radius $r_k = (\beta_k-\alpha)/2$ centered at $x_k$ is a subset of the (concentric) spherical shell $B_{\beta_k} \setminus \overline{B_\alpha}$. 
At the same time, it is not hard to see from \eqref{eq:pbprx} that $\beta_k \to +\infty$ as $k \to +\infty$.
The domain monotonicity of the Dirichlet eigenvalues then implies 
\begin{equation}\label{eq:lb}
\lambda_1(B_{\beta_k} \setminus \overline{B_\alpha}) < \lambda_1(B_{r_k}(x_k)) 
\equiv 
\lambda_1(B_{r_k})
\to 0 
\quad \text{as}~ k \to +\infty. 
\end{equation}
On the other hand, using the well-known expression for the Dirichlet eigenvalues of parallelepipeds, we get
$$
\lambda_1(R_k) = \frac{\pi^2}{4}
\left(
N-1 + \frac{1}{k^2}
\right)
\to 
\frac{\pi^2}{4} (N-1)
\quad \text{as}~ k \to +\infty. 
$$
Again in view of the domain monotonicity, we have
\begin{equation}\label{eq:lr}
\lambda_1(\Omega_k) 
\equiv
\lambda_1(R_k \setminus \overline{B_\alpha})
> 
\lambda_1(R_k) 
\geq
\frac{\pi^2}{4} (N-1) + \rho(k),
\end{equation}
where $\rho(k) \to 0$ as $k \to +\infty$. 
Comparing \eqref{eq:lb} and \eqref{eq:lr}, we conclude that, for any sufficiently large $k$, 
$$
\lambda_1(\Omega_k) > \lambda_1(B_{\beta_k} \setminus \overline{B_\alpha}),
$$
which is the reverse of the Hersch--Weinberger inequality \eqref{eq:reverseFK} from Theorem~\ref{thm:FK}. 

Notice that the domain $\Omega_k$ is merely Lipschitz, but it can be regularized using the well-known stability of Dirichlet eigenvalues. 
Moreover, in view of the convergence of the Robin eigenvalues to the Dirichlet ones as the Robin parameter tends to $+\infty$ (cf.\ \cite[Proposition~4.5]{bucur2017robin}), we conclude that the same result holds at least when $\hi$ and $\ho$ are sufficiently large.  

\begin{figure}[!ht]
    \centering
    \includegraphics[width=0.5\linewidth]{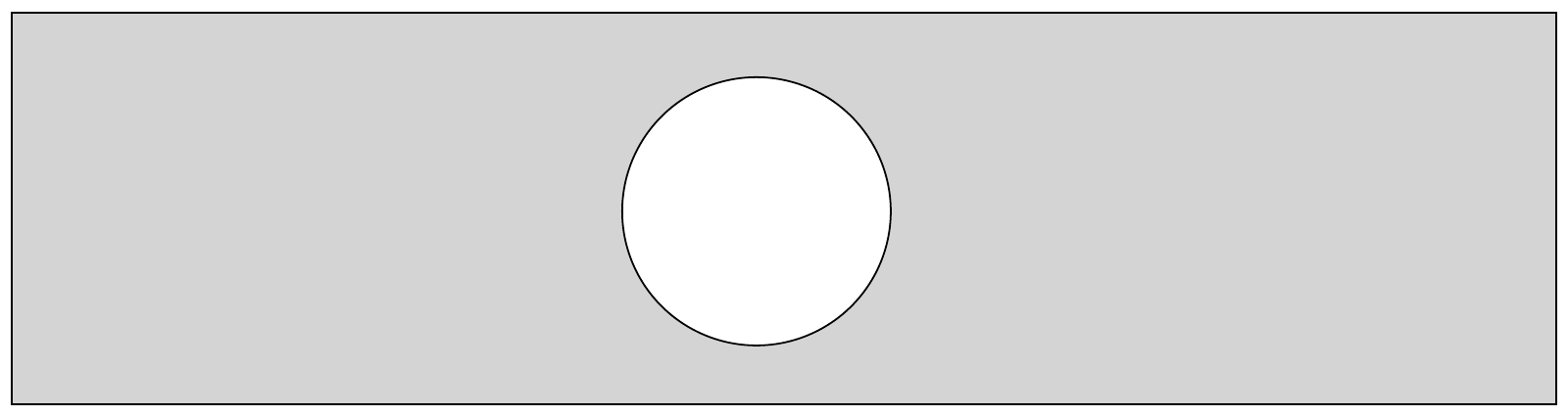}
    \caption{The domain $\Omega_k$ in $\mathbb{R}^2$.}
    \label{fig:Om1}
\end{figure}

\section{Proof of Theorem~\ref{thm:FK}}\label{sec:proof_FK}

Let $\Omega \in \mathcal{K}_{\alpha,\beta}^N$ and $\hi,\ho \in (0,+\infty]$.
Throughout this section, we denote by $(\lambda_1(\Omega), u)$ the first eigenpair of \eqref{eq:D}, where $u>0$ in $\Omega$.  
Let us outline the arguments.
In Section~\ref{sec:approx:morse}, we show that $u$ can be approximated by the first eigenfunctions of perturbed problems in such a way that the approximating first eigenfunctions have no degenerate critical points.   
Then, in Section~\ref{sec:approx:cut}, we analyze the gradient vector field of approximating first eigenfunctions and prove Theorem~\ref{thm:FK}.

\subsection{Approximation by Morse eigenfunctions}\label{sec:approx:morse}

Our aim is to prove that $u$ can be approximated by the first eigenfunctions of a family of perturbed problems, which are Morse functions.
In the pure Dirichlet case $\hi,\ho=+\infty$, 
this can be achieved by perturbing the domain $\Omega$, see \cite{uhl}, but we are not aware of such a result in the non-Dirichlet case. 
Because of that, we provide an alternative construction in which we perturb the problem \eqref{eq:D} by adding a potential.
Our approach is inspired by \cite[Section~7]{albert1978generic}. 

Consider the following eigenvalue problem with a potential $V \in L^\infty({\Omega})$: 
\begin{align}\tag{$\mathcal{RR}_V$}\label{cutproblem2d-pert}
	\left\{\begin{aligned}
		-\Delta v + V(x) v &= \lambda v \quad\text{in} \quad\Omega,\\
		\frac{\pa v}{\pa \nu}+ \hi v&=0 \quad\;\;\text{on}\quad  \pa \Omi,\\
	\frac{\pa v}{\pa \nu}+ \ho v &=0 \quad\;\;\text{on}\quad  \pa \Omo.
	\end{aligned}
	\right.
\end{align}
Analogously to \eqref{eq:lambda1}, the first eigenvalue $\lambda_1(\Omega, V)$ of \eqref{cutproblem2d-pert} can be characterized as
\begin{equation}\label{eq:lambda1-pert}
	\lambda_1(\Omega, V) 
	=
	\inf_{v \in \widetilde{H}^1(\Omega)}
	\frac{\int_{\Om}|\nabla v|^2\dx+ \int_{\Om}V(x) v^2\dx + \ho \int_{\pa \Omo} v^2\dsi+\hi \int_{\pa \Omi} v^2\dsi}{\int_{\Om} v^2\dx},
\end{equation}
where $\widetilde{H}^1(\Omega)$ is as defined in \eqref{eq:H1}.
We directly observe the following universal bounds:
\begin{equation}\label{eq:lambda1-pert:1}
	\lambda_1(\Omega) - \|V\|_\infty 
    \leq
    \lambda_1(\Omega, V) 
	\leq
    \lambda_1(\Omega)
    +
    \|V\|_\infty.
\end{equation}

Before stating the main result of this subsection (Proposition~\ref{prop:approx}), we introduce some notation and prove a technical lemma. We define 
$$
\vertiii{\xi}_k = \sup_{x \in \overline{\Omega}} \sum_{|\alpha| \leq k} |D^\al \xi(x)|
\quad \text{for}~ \xi\in C^k(\overline{\Omega}).
$$
Recall that $(\lambda_1(\Omega), u)$ is the first eigenpair of \eqref{eq:D}, where $u>0$ in $\Omega$.
Let $\mathcal{C}$ be the critical set of $u$, that is,
\begin{equation}\label{eq:critdef}
\mathcal{C} 
= 
\{x \in \overline{\Omega}:~ |\nabla u(x)| = 0\}.
\end{equation}
Observe that there exists a compact set $K \subset \Omega$ such that $\mathcal{C}\subset {\rm Int} (K)$. 
Indeed, if no such $K$ exists, then, by the regularity of $u$ up to the boundary, there would be a critical point $x_0 \in \pa\Om$. 
However, in view of the boundary conditions, we get $u(x_0)=0$, which, combined with $u>0$ in $\Omega$ and the regularity of the boundary, 
contradicts the boundary point lemma, cf.\ \cite[Lemma~4.1]{ABG2}. 

Let us take any cut-off function $\varphi \in C_c^\infty(\Omega)$ with the following  properties:
\begin{equation}\label{eq:phi-properties}
0 \leq \varphi \leq 1 ~\text{in}~ \Omega
\quad \text{and} \quad 
\varphi = 1 ~\text{in}~ K.
\end{equation}
Let $S= {\rm supp}(\varphi)$, so that $K \subset S$. 
Denote
\begin{align}\label{def:gammas}
\gamma_{1} &= \min\{u(x):~ x \in S\} > 0,\\
\gamma_{2} &= \inf\{|\nabla u(x)|:~ x \in \Omega \setminus K\} > 0,\\ 
\gamma_{3} &= \sup\{|\nabla \varphi(x)|:~ x \in \Omega \setminus K\} > 0,
\end{align}
and 
\begin{equation}\label{def:gamma3}
\gamma 
= 
\min
\left\{
\gamma_{1}, \gamma_{2}, \frac{2\gamma_2}{\gamma_3+1} 
\right\} 
> 0.
\end{equation}
For any $v \in C^1(\overline{\Omega})$ satisfying $\vertiii{v-u}_1 < \gamma/2$,  we set
\begin{equation}\label{def:un}
w 
= \varphi  v+
(1-\varphi) u  
\equiv
u - \varphi (u-v).
\end{equation}

We now provide a technical lemma. 
\begin{lemma}\label{lem:uv}
Let $v$ and $w$ be as defined above. 
Then the following assertions hold:
\begin{enumerate}[label={\rm(\roman*)}]
	\item\label{lem:uv:1}  $w = v$ in $K$, and $w=u$ in $\Omega\setminus S$;
    \item \label{lem:uv:2}  the critical sets of $w$ and $v$ coincide and are contained in $K$; 
	\item\label{lem:uv:3} $w>0$ in $\Omega$ and $w > \gamma/2$ in $S$. 
\end{enumerate}
\end{lemma}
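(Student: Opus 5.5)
Item \ref{lem:uv:1} follows directly from \eqref{def:un} and \eqref{eq:phi-properties}. Since $\varphi=1$ in $K$, we get $w=\varphi v+(1-\varphi)u=v$ there. Since $\varphi=0$ outside $S=\mathrm{supp}(\varphi)$, we get $w=u$ in $\Omega\setminus S$. We will use that $\nabla w=\nabla v$ in $\mathrm{Int}(K)$, and we will handle the boundary points of $K$ by continuity, or by working on all of $K$ with the identity below.

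For \ref{lem:uv:2}, we split $\Omega$ into $K$ and $\Omega\setminus K$. In $K$ we have $w=v$. Since $\varphi\equiv 1$ on $K$, the formula
\[
\nabla w=\nabla u-\varphi\nabla(u-v)-(u-v)\nabla\varphi
\]
together with $\nabla\varphi=0$ in $\mathrm{Int}(K)$ (and, by continuity of $\nabla\varphi$, on $K$) gives $\nabla w=\nabla v$ on $K$. Hence the critical points of $w$ and of $v$ in $K$ coincide.

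In $\Omega\setminus K$, we estimate using $0\le\varphi\le1$, $|u-v|<\gamma/2$, $|\nabla(u-v)|<\gamma/2$ (from $\vertiii{v-u}_1<\gamma/2$), and $|\nabla\varphi|\le\gamma_3$:
\[
|\nabla w|\ \ge\ |\nabla u|-\varphi|\nabla(u-v)|-|u-v||\nabla\varphi|\ >\ \gamma_2-\tfrac{\gamma}{2}-\tfrac{\gamma}{2}\gamma_3=\gamma_2-\tfrac{\gamma(1+\gamma_3)}{2}\ \ge 0,
\]
because $\gamma\le 2\gamma_2/(\gamma_3+1)$. So $w$ has no critical points outside $K$. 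Similarly,
\[
|\nabla v|\ge|\nabla u|-|\nabla(u-v)|>\gamma_2-\gamma/2\ge\gamma_2/2>0
\]
on $\Omega\setminus K$, since $\gamma\le\gamma_2$. So $v$ has no critical points outside $K$ either. Both critical sets therefore lie in $K$ and coincide there.

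A subtlety is that the strict inequality relies on $\sup|u-v|$ being strictly less than $\gamma/2$. If needed, one can use the pointwise strictness $|\nabla(u-v)(x)|<\gamma/2$, which holds at every point because the sup norm of the sum is $<\gamma/2$. If the paper's $\vertiii{\cdot}_1$ also covers $\overline\Omega$, we may include boundary points as well.

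For \ref{lem:uv:3}, on $S$ we have $u\ge\gamma_1\ge\gamma$ and $|u-v|<\gamma/2$, so $v>\gamma/2$. Then $w=\varphi v+(1-\varphi)u$ is a convex combination of two numbers each exceeding $\gamma/2$, hence $w>\gamma/2$ on $S$. On $\Omega\setminus S$ we have $w=u>0$. Thus $w>0$ in $\Omega$.

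The only mildly delicate step is the gradient estimate on $\Omega\setminus K$. There the precise choice $\gamma\le 2\gamma_2/(\gamma_3+1)$ is exactly what is needed, and one must check that the strict inequalities survive. The rest is bookkeeping.
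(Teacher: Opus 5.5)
Your proof is correct and follows the paper's argument essentially line by line. You use the same lower bounds $|\nabla v|>\gamma_2-\gamma/2$ and $|\nabla w|>\gamma_2-\gamma_3\gamma/2-\gamma/2\ge 0$ on $\Omega\setminus K$, and the same positivity estimate on $S$; your convex-combination phrasing is equivalent to the paper's $w\ge u-|u-v|$. Your extra check that $\nabla w=\nabla v$ on all of $K$, including $\partial K$, is a detail the paper glosses over. Note, though, that $\nabla\varphi=0$ at every point of $K$ holds because each such point is an interior maximum of $\varphi$ (since $0\le\varphi\le 1=\varphi|_K$). It does not follow merely by continuity, as $K$ need not be the closure of its interior.
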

\begin{proof}
	\ref{lem:uv:1} is evident from the definition \eqref{def:un} of $w$ and the properties \eqref{eq:phi-properties} of $\varphi$. 
    
	\noindent ~\ref{lem:uv:2}
	Since $\vertiii{v-u}_1 < \gamma/2$, we get
	$$
	-\gamma/2 < |\nabla v| - |\nabla u| < \gamma/2
	\quad \text{in}~ \Omega.
	$$
	Therefore, by \eqref{def:gammas} and \eqref{def:gamma3}, 
	$$
	|\nabla v| > |\nabla u| - \gamma/2 \geq \gamma_2 - \gamma/2>0
	\quad \text{in}~
	\Omega \setminus K.
	$$
	On the other hand, we have
	\begin{equation}\label{def:un2}
	\nabla w 
	=
	\nabla u - \nabla \varphi \, (u-v) -\varphi (\nabla u - \nabla v).
	\end{equation}
    Since $\vertiii{v-u}_1 < \gamma/2$, using \eqref{eq:phi-properties} and \eqref{def:gamma3}, we deduce that
	\begin{align}
		|\nabla w| 
		&\geq 
		|\nabla u|
		- |\nabla \varphi| \, |u-v|
		-
		|\nabla u - \nabla v| 
		\\
		&>
		\gamma_2 
		-
		\gamma_3 \cdot \gamma/2 
		- 
		\gamma/2
		\geq 0
		\quad \text{in}~
		\Omega \setminus K.
	\end{align}
    Thus, both $ w$ and $ v$ do not have critical points in $\Omega \setminus K$. Since $w=v$ in $K$, the assertion~\ref{lem:uv:2} follows. 
	
    \noindent ~\ref{lem:uv:3} Using $\vertiii{v-u}_1 < \gamma/2$ and \eqref{eq:phi-properties}, we have $w = u > 0$ in $\Omega\setminus S$, and 
	$$
	w \geq u - |u-v| 
	> 
	\gamma_1 - \gamma/2 \geq \gamma/2 > 0
	\quad \text{in}~ S.
	$$
    The proof is complete.
\end{proof}

The main result of this subsection is the following proposition. 
\begin{proposition}\label{prop:approx}
Let $\hi,\ho \in (0,+\infty]$. Let $(\lambda_1(\Omega), u)$ be the first eigenpair of \eqref{eq:D}.  
Then, for any $n \in \mathbb{N}$, there exists $V_n \in C_c(\Omega)$ such that the first eigenpair $(\lambda_1(\Omega,V_n), u_n)$ of \eqref{cutproblem2d-pert} and $V_n$ satisfy the following assertions:
    \begin{enumerate}[label={\rm(\roman*)}]
    	\item\label{prop:approx:1} $\lambda_1(\Omega,V_n) = \lambda_1(\Omega)$;
        \item\label{prop:approx:1.5} 
        $u_n > 0$ in $\Omega$ and 
        $u_n \in C^{2,\theta}(\overline{\Omega})$;
        \item\label{prop:approx:2} $u_n$ has only non-degenerate critical points (i.e., $u_n$ is a Morse function);
        \item\label{prop:approx:3} 
        $u_n$ has no minimum points in $\Omega$;
    	 \item\label{prop:approx:4} $\|V_n\|_\infty\rightarrow 0$ and $\vertiii{u_n - u}_2 \rightarrow 0$ as $n\rightarrow +\infty$.
	\end{enumerate}
\end{proposition}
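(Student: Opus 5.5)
The approach is the trick used in \cite[Section~7]{albert1978generic}: prescribe the approximating eigenfunction first and read off the potential from the equation. Fix $n$. Let $K$, $\varphi$, $S$ and $\gamma$ be as above. First choose $v_n \in C^\infty(\overline{\Omega})$ with $\vertiii{v_n-u}_2 < \min\{\gamma/2, 1/n\}\cdot\epsilon_n$, where $\epsilon_n\to 0$ is a small parameter to be fixed later. Second, perturb $v_n$ by a small linear function, $v_n(x)+a\cdot x$. By Sard's theorem (the standard density of Morse functions), for almost every small $a \in \mathbb{R}^N$ this function is Morse in a neighbourhood of $K$. We take $v_n$ Morse near $K$ and still within $\min\{\gamma/2,1/n\}\epsilon_n$ of $u$ in the $C^2$ norm.

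Set $u_n = w = u - \varphi(u-v_n)$ as in \eqref{def:un}. By Lemma~\ref{lem:uv}, $u_n>0$ in $\Omega$ and $u_n \ge \gamma/2$ on $S$. Its critical points are those of $v_n$ and lie in $K$, where $u_n=v_n$, so $u_n$ is Morse. Moreover $u_n=u$ near $\partial\Omega$, so $u_n$ satisfies the Robin/Dirichlet boundary conditions and $u_n\in C^{2,\theta}(\overline\Omega)$. Define
\[
V_n = \frac{\Delta u_n + \lambda_1(\Omega) u_n}{u_n}.
\]
On $\Omega\setminus S$ we have $u_n=u$, so $V_n=0$ there; hence $V_n$ is continuous with support in $S$, i.e.\ $V_n\in C_c(\Omega)$. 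On $S$,
\[
\Delta u_n+\lambda_1 u_n = -\Delta(\varphi(u-v_n)) - \lambda_1\varphi(u-v_n),
\]
using $\Delta u=-\lambda_1 u$. Since $\varphi$ is fixed, this quantity is $O(\vertiii{u-v_n}_2)$, and dividing by $u_n\ge\gamma/2$ gives $\|V_n\|_\infty \le C\,\vertiii{u-v_n}_2 \to 0$. Likewise $\vertiii{u_n-u}_2\le C\vertiii{u-v_n}_2\to0$. This gives \ref{prop:approx:4}.

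By construction $u_n$ is a positive solution of \eqref{cutproblem2d-pert} with $V=V_n$ and $\lambda=\lambda_1(\Omega)$. A positive eigenfunction must be the first one: it cannot be $L^2$-orthogonal to the positive first eigenfunction of the perturbed problem. Hence $\lambda_1(\Omega,V_n)=\lambda_1(\Omega)$ and $u_n$ is the first eigenfunction, which proves \ref{prop:approx:1}--\ref{prop:approx:2}.

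The main obstacle is \ref{prop:approx:3}, the absence of interior minima. The function $u$ itself has no interior minimum: there $\Delta u = -\lambda_1 u<0$, which forbids a local minimum. For $u_n$, at a critical point $x_0\in K$ we have $u_n=v_n$ and $\Delta u_n(x_0)\ge \Delta u(x_0) - \vertiii{u-v_n}_2$. On $K$, $\Delta u=-\lambda_1 u\le -\lambda_1\gamma_1<0$ uniformly. So once $\epsilon_n$ is small enough that $\vertiii{u-v_n}_2<\lambda_1\gamma_1$, we get $\Delta u_n(x_0)<0$. Then the Hessian at $x_0$ cannot be positive definite, and $x_0$ is not a minimum. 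This is where $\lambda_1(\Omega)>0$ is used (guaranteed since $\hi,\ho>0$), and it fixes the smallness requirement on $\epsilon_n$. The remaining point to check is the genericity statement for Morse perturbations near the compact set $K$, which is standard.
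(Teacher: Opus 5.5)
Your proposal is correct and follows essentially the paper's route: a Sard-generic linear perturbation made Morse, spliced in via $u_n=u-\varphi(u-v_n)$ and Lemma~\ref{lem:uv}, the potential read off as $V_n=(\Delta u_n+\lambda_1(\Omega)u_n)/u_n$, and interior minima excluded via $\Delta u_n<0$ on $K$. The preliminary mollification is superfluous, since $u$ is already real-analytic in $\Omega$ and the paper simply takes $v_n=u+a_n\cdot x$. One small sign slip: to get $\Delta u_n(x_0)<0$ you need the upper bound $\Delta u_n(x_0)\le\Delta u(x_0)+\vertiii{u-v_n}_2$, not the lower bound you wrote, but it follows from the same estimate $|\Delta(v_n-u)|\le\vertiii{v_n-u}_2$.
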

\begin{proof}
For $a\in \R^N\setminus\{0\}$, consider the function $v(x)=u(x)+a\cdot x$. 
Clearly, $x$ is a  critical point of $v$ if and only if $\nabla u(x)=-a$. 
Moreover, $x$ is a degenerate critical point of $v$ if and only if  $-a$ is critical value of $\nabla u$.  
By Sard's theorem, it is known that $\{a\in \mathbb{R}^N: -a \text{ is a critical value of } \nabla u \}$ is of Lebesgue measure zero. Thus, we can choose a sequence $\{a_n\}$ in $\R^N\setminus\{0\} $ that converges to zero such that all critical points of $v_n(x)=u(x)+a_n\cdot x$ are non-degenerate. 
In particular, each $v_n$ is a Morse function in $\Omega$. 
Now, for every $n \in \mathbb{N}$, we define 
$$
u_n(x) =\varphi(x)v_n(x)+(1-\varphi(x))u(x)= u(x)+\varphi(x)(a_n\cdot x),
$$ 
where $\varphi \in C_c^\infty(\Omega)$ is as given in \eqref{eq:phi-properties}. 
Clearly,  $u_n \in C^{2,\theta}(\overline{\Omega})$. 
We can also start indexing $\{a_n\}$ so that 
$$
\vertiii{u_n-u}_1=\vertiii{\varphi (x)(a_n\cdot x)}_1< \frac{\gamma}{2}
\quad \text{for any}~ n \in \mathbb{N}.
$$
Therefore, by Lemma~\ref{lem:uv}, $u_n>0$ in $\Omega$, and the critical sets of $u_n$ and $v_n$ coincide and belong to $K$, so that $u_n$ is also a Morse function.
This proves 
 the assertions~\ref{prop:approx:1.5} and \ref{prop:approx:2}. 

Let us consider the assertion~\ref{prop:approx:3}. 
Denote $\gamma_4:=\la_1(\Omega)\min_{y\in K} u(y) > 0$. 
We can further omit a finite number of elements from $\{a_n\}$
so that 
$$
|\Delta u_n(x)+\la_1(\Omega)u(x)|
=
|\Delta u_n(x)- \Delta u(x)|=|\Delta(\varphi (x)(a_n\cdot x))|< \frac{\gamma_4}{2}
\quad \text{for any}~ x\in \Omega ~\text{and}~ n\in \mathbb{N},
$$
and hence
$$
\Delta u_n<-\la_1(\Omega)u+\frac{\gamma_4}{2}<-\frac{\gamma_4}{2} \quad \text{in}~ K.
$$ 
Thus, critical points of $u_n$ in $K$ are not minimum points. 
Since $u_n$ does not have critical points in $\Omega \setminus K$, we conclude that $u_n$ does not have minimum points in $\Omega$ for every $n\in\mathbb{N}$.

Next, we show that $u_{n}$ is the first eigenfunction of \eqref{cutproblem2d-pert} with an appropriate potential $V_{n}$. 
Let us define
\begin{equation}\label{eq:potV}
V_{n} = \frac{\Delta u_{n} + \lambda_1(\Omega) u_{n}}{u_{n}}.
\end{equation}
Since $u_{n}=u$ in $\Omega\setminus S$, we have $V_{n} = 0$ in $\Omega\setminus S$. 
As $u_n> \gamma/2$ in $S$ by Lemma~\ref{lem:uv}, we get $V_{n} \in C_c^(\Omega)$. 
Clearly,
$$
-\Delta u_{n}+V_n u_n=\lambda_1(\Omega) u_{n}
\quad \text{in}~ \Omega.
$$
Again by the equality $u_n=u$ in $\Omega\setminus S$, we see that $u_n$ satisfies the same Robin boundary conditions on $\partial\Omega$ as $u$. 
Thus, $\lambda_1(\Omega)$ is an eigenvalue of \eqref{cutproblem2d-pert}. 
Recalling the positivity of $u_n$ in $\Omega$ and noting that the only sign-constant eigenfunction of \eqref{cutproblem2d-pert} is the first one, we conclude that $\lambda_1(\Omega,V_n) = \lambda_1(\Omega)$. 
Moreover, we obtain
\begin{align*}
	|V_{n}| 
	= 
	\frac{|(\Delta + \lambda_1(\Omega))(u_{n}-u)|}{|u_{n}|}
	\leq 
	\frac{|\Delta(u_{n}-u)|}{|u_{n}|} 
	+
	\lambda_1(\Omega) \frac{|u_{n}-u|}{|u_{n}|}. 
\end{align*} 
Therefore, 
recalling that $V_{n} = 0$ in $\Omega\setminus S$ and $u_n> \gamma/2$ in $S$, we have 
$$
\|V_{n}\|_\infty 
\leq 
\frac{2}{\gamma}(1 + \lambda_1(\Omega))
\vertiii{u_{n} - u}_{2}.
$$ 
Since $a_n\rightarrow 0$, we get $\vertiii{u_n-u}_2\rightarrow 0$, and hence $\|V_{n}\|_\infty \to 0$ as $n \to +\infty$.
This completes the proofs of the assertions~\ref{prop:approx:1} and \ref{prop:approx:4}. 
\end{proof}

\subsection{Approximation by the gradient flow}\label{sec:approx:cut}

In this subsection, we prove Theorem~\ref{thm:FK}. 
We start with preliminary definitions and several auxiliary results. 

For a fixed $n \in \mathbb{N}$, consider the first eigenfunction $u_n$ of the perturbed problem \eqref{cutproblem2d-pert} with $V=V_n$ given by Proposition~\ref{prop:approx}, and the corresponding gradient-descent system:
	\begin{equation}\label{eq:cauchy}
		\partial_t{\phi}(t,x_0) = -\nabla u_n(\phi(t,x_0)), \quad t \in I,
		\qquad \phi(0,x_0) = x_0 \in \overline{\Omega}, 
	\end{equation}
	where $I = I(x_0) \subset \mathbb{R}$ is the maximal interval containing zero on which $\phi(t,x_0) \in \overline{\Omega}$. 
    Let us emphasize that $\phi$ depends on $n$, but since we assume $n$ to be fixed for most of this subsection, we omit writing the corresponding subscript for $\phi$. 
    The solution $t \mapsto \phi(t,x_0)$ of \eqref{eq:cauchy} is called the \textit{gradient flow line}, or \textit{trajectory}, of $u_n$ through $x_0$. 
    Moreover, the whole trajectory containing $x_0$ is commonly called the \textit{orbit} of $x_0$, and we will sometimes denote it as $\mathcal O_{x_0}$, that is,
\begin{equation}\label{eq:orbit}
\mathcal O_{x_0}
=
\bigcup\limits_{t\in I}\phi(t,x_0). 
\end{equation}
   If $|\nabla u_n(x_0)| > 0$, then $u_n$ is strictly decreasing along the corresponding trajectory, that is, $u_n(\phi(t_1,x_0)) > u_n(\phi(t_2,x_0))$ for all $t_1<t_2$ in $I(x_0)$. 

By the Robin boundary conditions with $\hi,\ho \in (0,+\infty]$ and the boundary point lemma, it follows that $\phi(t,x_0) \in \Omega$ for any $t<0$ and $x_0 \in \overline{\Omega}$, cf.\ \cite[Lemma~4.4]{ABG2}.
Consequently, we have $\{t < 0\} \subset I(x_0)$ for any $x_0 \in \overline{\Omega}$, and hence $\phi(t,\Omega) \subset \Omega$ for $t<0$. 
In particular, $\phi(t,\pa\Omi) \subset \Omega$ and $\phi(t,\pa\Omo) \subset \Omega$ for any $t<0$.  
Moreover, since $u_n \in C^{2,\theta}(\overline{\Omega})$, $\phi(t,\cdot)$ defines a $C^{1,\theta}$-diffeomorphism between $K$ and $\phi(t,K)$ for any subset $K \subset \overline{\Omega}$ and $t<0$.

Let us also observe that since $u_n$ is a Morse function, $\lim\limits_{t \to -\infty} \phi(t,x_0)$ exists, belongs to $\Omega$, and is a critical point $u_n$ for any $x_0 \in \overline{\Omega}$, cf.\ \cite[Lemma~7.4.4 or 7.4.7]{jost2005riemannian}. 
In the same way, if $\phi(t,x_0) \in \overline{\Omega}$ for all $t > 0$, then $\lim\limits_{t \to +\infty} \phi(t,x_0)$ exists, belongs to $\Omega$, and is a critical point $u_n$.
    
\begin{remark}\label{rem:wein}
	In \cite{wein}, \textsc{Weinberger} studied the planar case $N=2$ with $\hi,\ho = +\infty$ and considered the set
\begin{equation}\label{eq:G}
	G = \{x\in \Omega:~ \phi(t_x, x) \in \partial \Omi\; \text{for some}\;
	t_x \in \mathbb{R}\},
	\end{equation}
    where $\phi$ is the solution of \eqref{eq:cauchy} for the first eigenfunction $u$ of \eqref{eq:D}. 
    It was proved that $G$ is open and there is a connected component $\widetilde{\gamma}$ of $\partial G \cap \Omega$ consisting of a finite number of analytic curves, each of which is either a curve of critical points of $u$, or a flow line ${\phi}(\cdot,x)$. Moreover, 
    $\partial u/ \partial \nu = 0$ a.e.\ on $\widetilde{\gamma}$, and  
    $\widetilde{\gamma}$ separates $\partial \Omi$ from $\partial \Omo$ in the sense that any curve lying in $\Omega$ and connecting $\partial \Omi$ with $\partial \Omo$ crosses $\widetilde{\gamma}$. 
	\textsc{Weinberger} called $\widetilde{\gamma}$ the \textit{effectless cut}, since $\Omega$ might be ``cut'' in two annular domains along $\widetilde{\gamma}$ without lowering its first eigenvalue, and this concept was used by \textsc{Hersch} \cite{hersh} to prove the inequality \eqref{eq:reverseFK} as described in Section \ref{sec:intro}; see \cite{ABG2} for more details and a revision of \cite{hersh,wein}. We also refer to \cite{ABG, band2016topological, band2020defining} for related developments in a broader context of investigating the so-called \textit{Neumann domains} of Laplace eigenfunctions. 
  
The same set $G$ and the corresponding effectless cut $\widetilde{\gamma}$ (as a part of $\partial G \cap \Omega$) can be defined in a similar way in higher dimensions $N \geq 3$. 
However, as we mentioned in Section~\ref{sec:intro}, in this case, the analysis of the regularity of $G$ becomes very complicated, and we also refer to Section~\ref{sec:surface} for further discussion. 
To overcome this difficulty, we consider ``approximations of $G$'' along the gradient flow, rather than $G$ itself.
\end{remark}

In what follows, in accordance with the notation in \eqref{eq:lllll}, we use analogous notation for the first eigenvalue of \eqref{cutproblem2d-pert} with $V=V_n$:
$$
\lambda_1^{\mathcal{RR}}(\Om, V_n),~ 
\lambda_1^{\mathcal{NR}}(\Om, V_n),~ 
\lambda_1^{\mathcal{RN}}(\Om, V_n),
$$
where the superscript $\mathcal{RR}$ 
corresponds to the Robin-Robin case $\hi,\ho \in (0,+\infty]$, $\mathcal{NR}$ corresponds to the Neumann-Robin case $\hi=0$, $\ho \in (0,+\infty]$, and $\mathcal{RN}$ corresponds to the Robin-Neumann case $\hi \in (0,+\infty]$, $\ho =0$.

\begin{figure}[!ht]
    \centering
\includegraphics[width=0.4\linewidth]{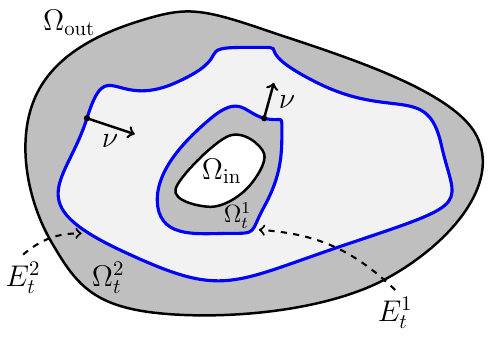}
\captionsetup{width=0.75\textwidth}
    \caption{The sets $E^1_t,E^2_t$, and the corresponding domains $\Om^1_t,\Om^2_t$. Here, $\nu$ is the unit outward normal vector to $\Om^1_t$ and $\Om^2_t$.}
    \label{fig:Omega_t}
\end{figure}

For a fixed $n \in \mathbb{N}$ and any $t<0$, consider two sets 
(see Figure \ref{fig:Omega_t}):
\begin{align}
    E_t^1 := \phi(t,\pa \Omi)
    \quad \text{and} \quad 
    E_t^2 := \phi(t,\pa \Omo),
\end{align}
and consider two domains
\begin{align}
    \Omega_t^1 := \bigcup_{s \in (t,0)}E_s^1  
    \quad \text{and} \quad  
    \Omega_t^2 := \bigcup_{s \in (t,0)}E_s^2.
\end{align}
Let us collect a few general properties of $\Omega_t^1$ and $\Omega_t^2$. 
\begin{lemma}\label{lem:omegai-properties}
    For $i=1,2$ and $t<0$, we have 
    $\Omega_t^i \subset \Omega$, 
    $\Omega_t^i$ is of class $C^{1,\theta}$, 
    $\Omega_{t_1}^i \subset \Omega_{t_2}^i$ for any $t_2 < t_1 < 0$, and $\overline{\Omega_t^1} \cap \overline{\Omega_t^2} = \varnothing$.   
    Moreover, for each $t<0,$ we have  $$\frac{\partial u_n}{\partial \nu} > 0 \quad \text{on}~ E_t^i, \quad i=1,2.
    $$
    Here, $\nu$ stands for the outward unit normal vector to the boundary of $\Omega_t^i$.
\end{lemma}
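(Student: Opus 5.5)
The plan is to treat $\Omega_t^i$ as the image of the product $(t,0)\times \pa\Omi$ (resp.\ $(t,0)\times\pa\Omo$) under the map $\Phi(s,x)=\phi(s,x)$, and to read off all properties from the flow. First, $\Omega_t^i\subset\Omega$ follows at once from the already established fact that $\phi(s,\overline{\Omega})\subset\Omega$ for $s<0$. Monotonicity in $t$ is immediate from the definition: for $t_2<t_1<0$ we have $(t_1,0)\subset(t_2,0)$, hence $\Omega^i_{t_1}\subset\Omega^i_{t_2}$.

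Next I will show that $\Phi:(-\infty,0]\times\pa\Omi\to\overline\Omega$ is an injective $C^{1,\theta}$ immersion. There are no critical points of $u_n$ on $\pa\Omega$ or along backward orbits of boundary points, because a trajectory through a noncritical point never reaches a critical point in finite time. Hence $u_n$ is strictly decreasing along each such trajectory. Injectivity splits into two cases. If $\phi(s_1,x_1)=\phi(s_2,x_2)$ with $x_1\neq x_2$, then uniqueness for the ODE forces $x_2=\phi(s_1-s_2,x_1)$. But the trajectory through $x_1\in\pa\Omi$ lies in $\Omega$ for negative times, and for positive times it either leaves $\overline\Omega$ immediately or stays in $\Omega$. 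Indeed, $\partial u_n/\partial\nu=-h u_n<0$ on $\pa\Omega$ when $h<\infty$, and $u_n=0$ with $\partial_\nu u_n<0$ by Hopf in the Dirichlet case, so $-\nabla u_n$ points outward and the orbit crosses $\pa\Omega$ only once. Hence $x_2=x_1$ and $s_1=s_2$. The differential of $\Phi$ at $(s,x)$ sends $\partial_s$ to $-\nabla u_n$, which is transversal to $E_s^1=\phi(s,\pa\Omi)$ because $u_n$ is constant on no such set but strictly decreases along the flow. More precisely, $D\phi(s,\cdot)$ is an isomorphism, and at $s=0$ the vector $\nabla u_n$ is normal to $\pa\Omi$, so transversality propagates by the flow property. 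Thus $\Phi$ restricted to $[t,0]\times\pa\Omi$ is a $C^{1,\theta}$ embedding of a compact manifold with boundary. Its image is $\overline{\Omega^1_t}$, whose boundary $E^1_t\cup\pa\Omi$ consists of $C^{1,\theta}$ hypersurfaces, and $\Omega^1_t$ is the interior. Connectedness of $\pa\Omi$ gives that $\Omega^1_t$ is a domain. The same argument applies for $i=2$.

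For disjointness of closures, note that $\overline{\Omega^1_t}=\Phi([t,0]\times\pa\Omi)$ and similarly for $i=2$. A common point would lie on backward orbits of a point of $\pa\Omi$ and a point of $\pa\Omo$, and by ODE uniqueness these would be the same orbit. That orbit would then meet $\pa\Omega$ twice, contradicting the single-crossing property above. For the normal derivative, the key observation is that the outward normal to $\Omega^i_t$ on $E^i_t$ points in the direction of decreasing $s$, that is, roughly along $+\nabla u_n$, since $\Omega_t^i$ consists of points $\phi(s,x)$ with $s>t$. So $\nu\cdot\nabla u_n>0$ should follow from $\nabla u_n$ being transversal to $E^i_t$ and pointing out of $\Omega^i_t$.

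The main obstacle is making this orientation and transversality argument rigorous. I would do it via the flow map: $\partial_s\Phi=-\nabla u_n$ is never tangent to $E^i_s$, since $D\phi(s,\cdot)$ maps $T\pa\Omi$ onto $TE^i_s$ and $D\phi(s,\cdot)$ maps the vector field $\nabla u_n$ at $x$ to $\nabla u_n$ at $\phi(s,x)$, with $\nabla u_n(x)\notin T_x\pa\Omi$. By continuity in $s$ and connectedness, the sign of $\nu\cdot\nabla u_n$ on $E_s^i$ is constant. At $s\to0$, the set $E^i_0=\pa\Omi$ with the normal pointing into $\Omega$ (outward for $\Omega_t^1$ at that end is actually $-\nu_{\Omega}$). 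To avoid confusion, I would compute directly that $\frac{d}{ds}u_n(\phi(s,x))<0$, so points of $\Omega^i_t$ near $E_t^i$ have smaller $u_n$-values than $\phi(t,x)$. This gives $\partial_\nu u_n>0$ on $E^i_t$.
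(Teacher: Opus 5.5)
Your proposal is correct and follows essentially the paper's argument. Transversality comes from $D\phi(t,\cdot)$ carrying the splitting $T_x\mathbb{R}^N=T_x\partial\Omi\oplus\mathbb{R}\nabla u_n(x)$ onto $T_{\phi(t,x)}E_t^1\oplus\mathbb{R}\nabla u_n(\phi(t,x))$, after which the sign is fixed. The paper fixes it by continuity from $s=0$ using $\partial_\nu u_n<0$ on $\partial\Omega$; your observation that $\partial_s\Phi=-\nabla u_n$ points into $\Omega_t^i$ along $E_t^i$ is an equally valid shortcut. You also spell out the injectivity, embedding and disjointness details that the paper leaves as ``easy''.

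Minor slips:
\begin{itemize}
\item Under Robin conditions $\nabla u_n$ is only transversal, not normal, to $\partial\Omi$; transversality is all you use.
\item The outward normal of $\Omega_t^1$ on $\partial\Omi$ is the outward normal of $\Omega$, not its negative.
\item In the last step, what does the work is that the orbit enters $\Omega_t^i$ with velocity $-\nabla u_n$. The decrease of $u_n$ along the orbit says nothing about $\partial_\nu u_n$, so drop the claim that all nearby points of $\Omega_t^i$ have smaller $u_n$-values.
\end{itemize}
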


\begin{proof}
The inclusions and other assertions about $\Om^i_t$ follow easily from the properties of $\phi$, see the discussion before Remark~\ref{rem:wein}. 
Let us determine the sign of the normal derivative on $E_t^i$. 
Since $\Omi$ is $C^{2,\theta}$-smooth, the Robin boundary conditions and the boundary point lemma give
\begin{equation}\label{eq:normalderivative1}
\frac{\partial u_n}{\partial \nu}(x) < 0
\quad\text{for any}~ x \in \pa\Omi. 
\end{equation}
Hence, at each point $x\in \pa\Omi$ the tangent space $T_x \mathbb R^N$ can be decomposed as the direct sum of $(N-1)$- and $1$-dimensional spaces: 
$$
T_x\mathbb R^N=A_{x,0}\oplus B_{x,0},
$$ 
where 
$$
A_{x,0} :=T_x \pa\Omi 
\cong \mathbb{R}^{N-1}
\quad \text{and} \quad 
B_{x,0} :=T_x\mathcal O_x
\cong \mathbb{R},
$$ 
and where $\mathcal O_x$ is the orbit of $x$, see \eqref{eq:orbit}.
That is, any vector $v_0\in T_x\mathbb R^N$ can be uniquely represented as the direct sum of vectors $a_0\in A_{x,0}$ and $b_0\in B_{x,0}$. 
Set 
$$
A_{x,t} :=T_{\phi(t,x)}E^1_t 
\cong \mathbb{R}^{N-1}
\quad \text{and} \quad 
B_{x,t} :=T_x\mathcal O_{\phi(t,x)}
\cong \mathbb{R}.
$$
Since 
$$
\phi(t,\pa\Omi)=E_t^1
\quad \text{and} \quad 
\phi(t,\mathcal O_x)=\mathcal O_{\phi(t,x)},
$$ 
the differential $D_x\phi(t,x):T_x\mathbb R^N\to T_{\phi(t,x)}\mathbb R^N$ at a point $x \in \pa\Omi$ has the following property: 
$$
D_x\phi(t,A_{x,0})=A_{x,t}
\quad \text{and} \quad 
D_x\phi(t,B_{x,0})=B_{x,t}.
$$
Since $\phi(t,\cdot)$ is a diffeomorphism between $\pa\Omi$ and $E_t^1$, 
$D_x\phi(t,x)$ is an invertible linear mapping, and hence any vector $w_t\in T_{\phi(t,x)}\mathbb R^N$ can be uniquely represented as the direct sum of vectors $a_t\in A_{x,t}$ and $b_t\in B_{x,t}$, that is, 
$$
T_{\phi(t,x)}\mathbb R^N=A_{x,t}\oplus B_{x,t}.
$$ 
Thus, the orbit $\mathcal{O}_x$ transversally (i.e., not tangentially) intersects $E^1_t$ for any $t<0$, and hence a unit normal vector $\nu_{\phi(t,x)}$ to $E^1_t$ at the point $\phi(t,x)$ is not orthogonal to the vector $\nabla u_n(\phi(t,x))$, that is, 
\begin{equation}\label{eq:scalprod1}
\frac{\partial u_n}{\partial \nu_{\phi(t,x)}}(\phi(t,x))
\equiv 
\nabla u_n(\phi(t,x))
 \cdot 
 \nu_{\phi(t,x)}
 \neq 0. 
\end{equation}
To fix the direction of $\nu_{\phi(t,x)}$, let us assume that its direction is adjusted to $\nu_{\phi(0,x)}$, which we choose to be  the \textit{outward} unit normal vector to the boundary of $\Omega$, coinciding with our previously imposed convention. 
By continuity, the sign of the scalar product \eqref{eq:scalprod1} is always the same on $E^1_t$ for any $t\leq 0$. 
Recalling \eqref{eq:normalderivative1}, we conclude that 
$$
\frac{\partial u_n}{\partial \nu_{\phi(t,x)}}(\phi(t,x)) < 0\;\text{for any}\; x \in \pa\Omi \quad\text{and}~ t\leq 0.
$$
Noting now that $\nu_{\phi(t,x)}$ is the \textit{inward} unit normal vector to the boundary $E_t^1$ of $\Omega_t^1$, we conclude that
$$
\frac{\partial u_n}{\partial \nu}(x) > 0
\quad\text{for any}~ x \in E_t^1 \;\text{and}\; t < 0,
$$
where, as above, $\nu$ denotes the \textit{outward} unit normal vector. 
Exactly the same analysis works for $E_t^2$. 
 \end{proof}

We now prove the following crucial fact.
\begin{lemma}\label{lem:ineq:min:n}
Let $n \in \mathbb{N}$ and $t<0$.
Then 
\begin{equation}\label{eq:ineq-main1:2x}
\lambda_1^\mathcal{RR}(\Omega, V_n) 
<
\min \{
\lambda_1^\mathcal{RN}(\Omega_t^1,V_n),
\lambda_1^\mathcal{NR}(\Omega_t^2,V_n)
\}.
\end{equation}
\end{lemma}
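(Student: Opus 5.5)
We argue by a Barta-type comparison, using $u_n$ itself, restricted to $\Omega_t^i$, as a test function for the mixed problems on the subdomains. Fix $n$ and $t<0$ and write $\lambda:=\lambda_1^\mathcal{RR}(\Omega,V_n)=\lambda_1(\Omega)$ (by Proposition~\ref{prop:approx}). We treat $\Omega_t^1$; the argument for $\Omega_t^2$ is identical, with the roles of the inner and outer boundaries exchanged. The boundary of $\Omega_t^1$ consists of $\pa\Omi$ and $E_t^1$. On $\pa\Omi$ we impose the Robin (or Dirichlet) condition with parameter $\hi$, and on $E_t^1$ we impose the Neumann condition. By Lemma~\ref{lem:omegai-properties}, $\Omega_t^1$ is a $C^{1,\theta}$ domain compactly separated from $\pa\Omo$, so the problem is well posed. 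Let $\mu:=\lambda_1^\mathcal{RN}(\Omega_t^1,V_n)$, and let $w>0$ be the corresponding first eigenfunction.

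\emph{Step 1: Green's identity.} Multiply $-\Delta u_n+V_nu_n=\lambda u_n$ by $w$ and $-\Delta w+V_nw=\mu w$ by $u_n$, subtract, and integrate over $\Omega_t^1$. This gives
\begin{equation}
(\mu-\lambda)\int_{\Omega_t^1} u_n w \dx
=
\int_{\pa\Omega_t^1}\Big(w\frac{\pa u_n}{\pa\nu}-u_n\frac{\pa w}{\pa\nu}\Big)\dsi .
\end{equation}
On $\pa\Omi$ the two functions satisfy the same Robin condition, so the boundary integrand vanishes there. In the Dirichlet case $\hi=+\infty$, both functions vanish on $\pa\Omi$, and the integrand vanishes as well. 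On $E_t^1$ we have $\pa w/\pa\nu=0$, so the right-hand side reduces to $\int_{E_t^1} w\,\pa u_n/\pa\nu\dsi$. This quantity is strictly positive: Lemma~\ref{lem:omegai-properties} gives $\pa u_n/\pa\nu>0$ on $E_t^1$, and $w>0$ on $E_t^1$ by the boundary point lemma, or at least $w\ge0$ and $w\not\equiv0$ there. Since $u_nw>0$ in $\Omega_t^1$, we conclude that $\mu>\lambda$.

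\emph{Step 2: regularity.} The main obstacle is justifying the integration by parts when $\Omega_t^1$ is only $C^{1,\theta}$ and $w$ is merely a weak solution. To avoid pointwise boundary traces of $\nabla w$, we would work with the weak formulation. Take $u_n|_{\Omega_t^1}\in \widetilde H^1(\Omega_t^1)$ as a test function in the weak equation for $w$. Then take $w$ as a test function in the equation for $u_n$ on $\Omega_t^1$: since $u_n\in C^{2,\theta}(\overline\Omega)$, the divergence theorem applies to the vector field $w\nabla u_n$ on the Lipschitz domain $\Omega_t^1$, using traces of $w$ only. Subtracting the two identities eliminates $\int\nabla u_n\cdot\nabla w$ and the potential terms, and what remains is exactly the identity of Step 1. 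For strict positivity we only need $w\ge0$ and $w\not\equiv0$ on $E_t^1$ in the trace sense. If the trace of $w$ vanished a.e. on $E_t^1$, then $w$, satisfying zero Neumann data there, would have zero Cauchy data on $E_t^1$, and unique continuation would force $w\equiv0$, a contradiction.

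An alternative route, if needed, is to plug $u_n$ directly into the Rayleigh quotient for $\mu$. Integration by parts gives a quotient equal to $\lambda-\int_{E_t^1}u_n\,\pa u_n/\pa\nu\dsi\big/\int u_n^2$, which is less than $\lambda$. That inequality points the wrong way, which is why the Barta/Green comparison above is the intended route. Taking the minimum of the two strict inequalities, for $\Omega_t^1$ and $\Omega_t^2$, yields \eqref{eq:ineq-main1:2x}.
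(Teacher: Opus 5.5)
Your proof is correct and follows the paper's argument: you test Green's formula for $u_n\in C^{2,\theta}(\overline{\Omega})$ on $\Omega_t^1$ with the Robin--Neumann eigenfunction and the weak form of that eigenfunction with $u_n$, then subtract to get $(\lambda_1^{\mathcal{RN}}(\Omega_t^1,V_n)-\lambda_1^{\mathcal{RR}}(\Omega,V_n))\int_{\Omega_t^1}u_n w\dx=\int_{E_t^1}w\,\pa u_n/\pa\nu\dsi$, and conclude via Lemma~\ref{lem:omegai-properties}; your unique-continuation check that the trace of $w$ on $E_t^1$ is nontrivial is a detail the paper leaves implicit. One slip in your discarded aside: the Rayleigh quotient of $u_n$ on $\Omega_t^1$ equals $\lambda+\int_{E_t^1}u_n\,\pa u_n/\pa\nu\dsi\big/\int_{\Omega_t^1}u_n^2\dx>\lambda$, not $\lambda$ minus that term (your sign would actually contradict the lemma), so that route is simply uninformative.
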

\begin{proof}
Let us assume that $\hi,\ho \in (0,+\infty)$. 
We show that $\lambda_1^\mathcal{RR}(\Omega, V_n) 
<
\lambda_1^\mathcal{RN}(\Omega_t^1,V_n)$. 
A similar inequality for $\Omega_t^2$ can be derived using an analogous sequence of arguments.
Since $\Omega_t^1$ is $C^{1,\theta}$-smooth (see Lemma~\ref{lem:omegai-properties}) and $u_n \in C^{2,\theta}(\overline{\Omega})$ satisfies \eqref{cutproblem2d-pert}, 
we can apply the standard Green's formula on $\Omega_t^1$ to get
\begin{equation}
\label{eq:lem:proof:1}
\int_{\Omega_t^1} 
\nabla u_n \cdot \nabla \xi \dx = \lambda_1^\mathcal{RR}(\Omega,V_n) \int_{\Omega_t^1} u_n \xi \dx+ \int_{E_t^1} \frac{\partial u_n}{\partial \nu} \,\xi \dsi
- \hi \int_{\pa\Omi} u_n \xi \dsi
- \int_{\Omega_t^1} V_n u_n \xi \dx
\end{equation}
for any $\xi \in {H}^1(\Omega_t^1)$.

Let us denote by $(\lambda_1^\mathcal{RN}(\Omega_t^1,V_n), v_t)$  the first eigenpair of \eqref{cutproblem2d-pert} in $\Omega_t^1$ with the Robin-Neumann boundary conditions.
We have $v_t \in C^{1,\theta}(\overline{\Omega_t^1})$ (see, e.g., \cite[Theorem~2]{lieberman1988boundary}) and, without loss of generality, we may assume that $v_t>0$ in $\Omega_t^1$. 
The function $v_t$ satisfies
\begin{equation}
\label{eq:lem:proof:1x}
\int_{\Omega_t^1} \nabla v_t \cdot \nabla \xi \dx = \lambda_1^\mathcal{RN}(\Omega_t^1,V_n) \int_{\Omega_t^1} v_t \xi \dx
- \hi \int_{\pa\Omi} v_t \xi \dsi
- \int_{\Omega_t^1} V_n v_t \xi \dx
\end{equation}
for any $\xi \in {H}^1(\Omega_t^1)$.

Observe that $u_n|_{\Omega_t^1}\in {H}^1(\Omega_t^1)$. 
Taking $\xi = v_t$ in \eqref{eq:lem:proof:1} and $\xi = u_n|_{\Omega_t^1}$ in \eqref{eq:lem:proof:1x}, 
and subtracting, we get
\begin{equation}\label{Compare}
(\lambda_1^\mathcal{RN}(\Omega_t^1,V_n) - \lambda_1^\mathcal{RR}(\Omega,V_n)) \int_{\Omega_t^1} v_t u_n \dx = \int_{E_t^1} \frac{\partial u_n}{\partial \nu} v_t \dsi.
\end{equation}
Since $u_n$ and $v_t$ are positive, the desired inequality \eqref{eq:ineq-main1:2x} follows from Lemma \ref{lem:omegai-properties}.

The case $\hi=+\infty$ and/or $\ho = +\infty$ can be proved in much the same way. 
\end{proof}

Thanks to the regularity of $\Omega_t^i$, the embedding $\widetilde{H}^1(\Omega_t^i) \subset L^2(\Omega)$ is compact, and hence the first eigenvalues $\lambda_1^\mathcal{RN}(\Omega_t^1)$ and $\lambda_1^\mathcal{NR}(\Omega_t^2)$ of \eqref{eq:D} are well-defined and attained. 
It is known from \cite[Theorem 3.1]{ABG2} (see also \cite{PayneWein1961}) for $N=2$, and from 
\cite[Theorem~1.1]{DellaPiscitelli} (see also \cite[Theorem~1.6]{AnoopMrityunjoy}) for $N \geq 3$ that the following estimate holds for some $\alpha_t > \alpha$: 
\begin{equation}
\label{eq:RFK-ND}
\lambda_1^\mathcal{RN}(\Omega_t^1) 
\leq \lambda_1^\mathcal{RN}(B_{\alpha_t} \setminus \overline{B_{\alpha}}),
\quad \text{where}~
 W_{N-1}(\Om_{\text{in}})=W_{N-1}(B_\al)
 ~\text{and}~ 
|\Omega_t^1|=|B_{\alpha_t} \setminus \overline{B_{\alpha}}|. 
\end{equation}
Notice that $\alpha_t$ is decreasing as a function of $t$ on $(-\infty,0)$, as follows from the inclusion $\Om_{t_1}^1\subset \Om_{t_2}^1$ for $t_2<t_1<0$ (see Lemma~\ref{lem:omegai-properties}).

On the other hand, it is known from \cite{PayneWein1961} (see also \cite[Theorem~3.2]{ABG2}) for $N=2$, and from \cite[Theorem~3.1]{PaoliPiscitelli} (see also \cite[Theorem~1.2]{AnoopMrityunjoy}) for $N \geq 3$ that the following estimate holds for some $\beta_t \in (0,\beta)$:
\begin{equation}
\label{eq:RFK-RN}
\lambda_1^\mathcal{NR}(\Omega_t^2) 
\leq \lambda_1^\mathcal{NR}(B_\beta \setminus \overline{B_{\beta_t}}),
\quad \text{where}~ 
|\pa \Om_{\text{out}}|=|\pa B_\be|
~\text{and}~ 
|\Omega_t^2|=|B_\beta \setminus \overline{B_{\beta_t}}|.
\end{equation}
Moreover, $\beta_t$ is increasing as a function of $t$ on $(-\infty,0)$. 

Thanks to the monotonicity of $t \mapsto \alpha_t,\beta_t$, the value $(\beta_t-\alpha_t)$ increases with respect to $t \in (-\infty,0)$ and it is bounded below. 
Thus, $\lim\limits_{t \to -\infty} (\beta_t-\alpha_t)$ exists.

\begin{lemma}\label{lem:sigma-delta}
For  $n \in \mathbb{N}$ and for $t<0$, let $\Omega_t^1,\Omega_t^2$ and $\alpha_t$, $\beta_t$ be as given above.
Then we have
$|\Omega_t^1|
+
|\Omega_t^2|
\to |\Omega|
$
as 
$t \to -\infty$ 
and
\begin{equation}\label{eq:lem:sigma-delta}
\lim_{t \to -\infty} (\beta_t-\alpha_t) \leq 0,
\end{equation}
where equality holds if and only if $|\Omega| = |B_\beta \setminus \overline{B_{\alpha}}|$.
\end{lemma}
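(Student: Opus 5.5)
The plan is to prove the two assertions in order. The volume convergence comes from a measure-zero argument for the set of points whose trajectories never reach $\pa\Omega$. The inequality \eqref{eq:lem:sigma-delta} then follows from the volume constraint in $\mathcal{K}_{\alpha,\beta}^N$ together with the defining relations of $\alpha_t,\beta_t$ in \eqref{eq:RFK-ND} and \eqref{eq:RFK-RN}.

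\emph{Step 1 (exhaustion).} By Lemma~\ref{lem:omegai-properties}, the sets $\Omega_t^1,\Omega_t^2$ are disjoint, contained in $\Omega$, and increase as $t$ decreases. Set $U^i:=\bigcup_{t<0}\Omega_t^i$; by monotone convergence, $|\Omega_t^1|+|\Omega_t^2|\to|U^1|+|U^2|$ as $t\to-\infty$.

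Take $x\in\Omega\setminus(U^1\cup U^2)$ and suppose its forward trajectory $\phi(\cdot,x)$ leaves $\overline\Omega$. Then it reaches a point $y$ of $\pa\Omi$ or $\pa\Omo$ at some finite time $s_0>0$. Hence $x=\phi(-s_0,y)$ lies in $E^1_{-s_0}$ or $E^2_{-s_0}$, and so $x\in\Omega_t^i$ for every $t<-s_0$, a contradiction. (Recall that backward trajectories stay in $\Omega$ and are defined for all negative times.)

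So $\phi(t,x)\in\overline\Omega$ for all $t>0$. By the facts recalled before Remark~\ref{rem:wein}, $\phi(t,x)\to p$ as $t\to+\infty$ for some critical point $p$ of $u_n$. Thus $x$ lies in the stable manifold $W^s(p)$ of $p$ for the flow of $-\nabla u_n$. Since $u_n$ is Morse (Proposition~\ref{prop:approx}), its critical points are isolated in the compact set $K$, hence finitely many. The stable manifold theorem gives that $W^s(p)$ is an immersed submanifold of dimension equal to the number of positive eigenvalues of $D^2u_n(p)$.

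By Proposition~\ref{prop:approx}~\ref{prop:approx:3}, no critical point is a minimum. Hence each $p$ has Morse index at least $1$, and $\dim W^s(p)\le N-1$. Each $W^s(p)$ therefore has Lebesgue measure zero, so $|U^1|+|U^2|=|\Omega|$.

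\emph{Step 2 (limits of radii).} The function $t\mapsto\alpha_t$ is monotone and bounded, since $|B_{\alpha_t}\setminus\overline{B_\alpha}|=|\Omega_t^1|\le|\Omega|$. Likewise $\beta_t\in(0,\beta)$ is monotone. So the limits $a:=\lim_{t\to-\infty}\alpha_t$ and $b:=\lim_{t\to-\infty}\beta_t$ exist. Passing to the limit in the volume identities of \eqref{eq:RFK-ND} and \eqref{eq:RFK-RN}, and using Step 1, gives
$$
|B_a|-|B_\alpha|+|B_\beta|-|B_b| \;=\; |\Omega| \;\ge\; |B_\beta|-|B_\alpha|.
$$
The last inequality is exactly the constraint defining $\mathcal{K}_{\alpha,\beta}^N$. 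It follows that $|B_a|\ge|B_b|$, i.e.\ $a\ge b$, so $\lim_{t\to-\infty}(\beta_t-\alpha_t)=b-a\le0$. Moreover, equality $a=b$ holds if and only if $|\Omega|=|B_\beta\setminus\overline{B_\alpha}|$.

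\emph{Main obstacle.} The only delicate point is Step 1: showing that the non-exhausted set is negligible. This is where both the Morse property and the absence of minima from Proposition~\ref{prop:approx} are essential. A minimum of $u_n$ would have an open basin under the descending flow that never reaches $\pa\Omega$. One must also check the orientation convention: under $-\nabla u_n$, the stable manifold of an index-$k$ critical point has dimension $N-k$. Step 2 is routine algebra.
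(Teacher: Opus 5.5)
Your proof is correct and follows the paper's argument essentially step for step. You show that the set of points never swept out by $\Omega_t^1\cup\Omega_t^2$ lies in the finitely many stable manifolds of the non-minimum critical points of $u_n$, which are Lebesgue-null by the stable manifold theorem. You then obtain the inequality and its equality case by passing to the limit in the volume identities defining $\alpha_t,\beta_t$ and using $|\Omega|\ge|B_\beta\setminus\overline{B_\alpha}|$.
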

\begin{proof}
Recall that $\overline{\Omega_t^1} \cap \overline{\Omega_t^2} = \varnothing$, see Lemma~\ref{lem:omegai-properties}. 
Consider the set
$$
A := \Omega \setminus 
\bigcup_{t<0} (\Omega_t^1 \cup \Omega_t^2).
$$
If $x\in A$, then, by construction, the orbit $\mathcal{O}_{x}$ reaches neither $\partial \Omega_{\text{in}}$ nor 
$\partial \Omega_{\text{out}}$ in $I$. 
Consequently, $\mathcal{O}_{x}$ stays in $\Omega$, and hence in $A$, for all times, so that $I(x) = \mathbb{R}$.
Since the first eigenfunction $u_n$ strictly decreases along the gradient flow lines, and $u_n$ is a Morse function with no minimum points in $\Omega$ by Proposition~\ref{prop:approx}, we deduce that \textit{for any} regular $x \in A$ there exists a saddle point $p$ such that $\lim\limits_{t \to +\infty} \phi(t,x) = p$. 
That is, the corresponding orbit $\mathcal{O}_{x}$ is a part of the stable manifold of $p$ defined as 
\begin{equation}\label{eq:unstable}
W^s_p = \{
x \in \Omega:~ \phi(t,x) \to p ~\text{as}~ t \to +\infty\},
\end{equation}
and we have 
\begin{equation}\label{eq:unstable2x}
A \subset \{x \in {\Omega}:~ |\nabla u_n(x)|=0\} \cup \bigcup_{p ~\text{saddle}} W^s_p.
\end{equation}
By the Stable Manifold Theorem (see, e.g., \cite[p.~107]{perko2013differential}), any such $W^s_p$ is a differentiable manifold of dimension strictly less than $N$, and hence its Lebesgue measure is zero. 
Since the Morse function $u_n$ has finitely many critical points, the right-hand side of \eqref{eq:unstable2x} has zero Lebesgue measure, and hence $A$ must also have zero Lebesgue measure. 
Thus, $|\Omega_t^1|
+
|\Omega_t^2|
\to |\Omega|
$
as 
$t \to -\infty$. 

Let us justify the inequality \eqref{eq:lem:sigma-delta}.
Since, by \eqref{eq:RFK-ND} and \eqref{eq:RFK-RN}, 
$|\Omega_t^1|=|B_{\alpha_t} \setminus \overline{B_{\alpha}}|$
and
$|\Omega_t^2|=|B_\beta \setminus \overline{B_{\beta_t}}|$, respectively, 
the first part of the lemma and our constraint $|\Omega|
\geq 
|B_{\beta} \setminus \overline{B_{\alpha}}|$ imply
\begin{align}
|B_\beta \setminus \overline{B_{\beta_t}}|
+
|B_{\alpha_t} \setminus \overline{B_{\alpha}}|
=
|\Omega_t^2|
+
|\Omega_t^1|
\to 
|\Omega|
\geq 
|B_{\beta} \setminus \overline{B_{\alpha}}|
\quad \text{as}~ t \to -\infty,
\end{align}
and hence $-|B_{\beta_t}|+|B_{\alpha_t}| \geq 0$ at $t \to -\infty$.
This yields \eqref{eq:lem:sigma-delta}.
\end{proof}

\begin{remark}
    The proof of Lemma~\ref{lem:sigma-delta} is the only place in the entire proof of Theorem~\ref{thm:FK} where the Morse property of the first eigenfunction is used. 
    We note that, for a general (even real analytic) function, the stable or unstable manifold of a degenerate saddle point might have the dimension $N$ (see, e.g., \cite[Section~2.4]{ABG} for more details), and hence we cannot conclude \eqref{eq:lem:sigma-delta} in the same way as above. 
    We believe that even in this case the result of Lemma~\ref{lem:sigma-delta} continues to hold, although a proof is currently lacking.  
\end{remark}

Let us now state the following domain monotonicity result for the first eigenvalue of the Laplacian on a spherical shell satisfying mixed boundary conditions. 
See \cite[Lemmas~2.4 and 2.5]{ABG2} for an analogous result in the planar case; the same arguments extend verbatim to higher dimensions.

\begin{lemma}\label{lem:domain_mono}
  The mapping $r \mapsto \lambda_1^\mathcal{RN}(B_r \setminus \overline{B_{\alpha}})$ is continuous and decreasing in $(\alpha,+\infty)$, and the mapping $r \mapsto \lambda_1^\mathcal{NR}(B_\beta \setminus \overline{B_r})$ is continuous and increasing in $(0,\beta)$.  
  Moreover, we have
  $$
  \lambda_1^{\mathcal{RR}}(B_{\beta} \setminus \overline{B_{\alpha}})
=
\max_{\delta \in (\alpha,\beta)}
\min \{
\lambda_1^{\mathcal{RN}}(B_{\delta} \setminus \overline{B_{\alpha}}),
\lambda_1^{\mathcal{NR}}(B_{\beta} \setminus \overline{B_{\delta}})
\}.
  $$
\end{lemma}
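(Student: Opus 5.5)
We would prove Lemma~\ref{lem:domain_mono} by exploiting the radial symmetry of the shells, reducing everything to a one-dimensional Sturm--Liouville problem. The first eigenfunction on a concentric shell is unique up to scaling and positive, hence radial by rotation invariance. On $B_r\setminus\overline{B_\alpha}$ with Robin condition (parameter $\hi$) on $\partial B_\alpha$ and Neumann condition on $\partial B_r$, it solves
\[
-(\rho^{N-1}y')' = \lambda \rho^{N-1} y \quad\text{on } (\alpha,r),
\qquad
-y'(\alpha)+\hi y(\alpha)=0, \qquad y'(r)=0 .
\]
Here the outward normal on the inner sphere points toward the origin, and $\hi=+\infty$ means $y(\alpha)=0$. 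The Neumann--Robin problem on $B_\beta\setminus\overline{B_r}$ is treated analogously.

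\emph{Continuity.} We would rescale $(\alpha,r)$ to a fixed interval by an affine change of variables. The Rayleigh quotient then depends continuously on $r$, with uniform control of the coefficients on compact subsets of $r$. This gives continuity of the infimum, via upper and lower semicontinuity of the eigenvalue under uniformly convergent coefficients.

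\emph{Strict monotonicity.} Take $\alpha<r_1<r_2$ and let $y_2$ be the first eigenfunction on $(\alpha,r_2)$. Arguing as in Lemma~\ref{lem:ineq:min:n}, we test the equation for $y_2$ against $y_1$ on $(\alpha,r_1)$ and the equation for $y_1$ against $y_2$, then subtract. This yields
\[
\bigl(\lambda^{\mathcal{RN}}(r_1)-\lambda^{\mathcal{RN}}(r_2)\bigr)\int_\alpha^{r_1} y_1y_2\,\rho^{N-1}\,d\rho \;=\; -\,r_1^{N-1}\,y_2'(r_1)\,y_1(r_1).
\]
So it suffices to show $y_2'<0$ on $(\alpha,r_2)$. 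For this we integrate the ODE from $\rho$ to $r_2$:
\[
\rho^{N-1}y_2'(\rho)=-\lambda\int_\rho^{r_2}s^{N-1}y_2\,ds<0,
\]
which uses $y_2'(r_2)=0$ and $y_2>0$. The NR case is symmetric: there the eigenfunction is increasing on $(r,\beta)$, and the same identity gives that the eigenvalue increases in $r$.

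\emph{The max--min formula.} Set
\[
f(\delta)=\lambda_1^{\mathcal{RN}}(B_\delta\setminus\overline{B_\alpha}),
\qquad
g(\delta)=\lambda_1^{\mathcal{NR}}(B_\beta\setminus\overline{B_\delta}).
\]
By the above, $f$ is continuous and decreasing and $g$ is continuous and increasing. As $\delta\to\alpha^+$, $f\to+\infty$ (the domain becomes thin with a Robin/Dirichlet inner condition), while $g$ stays bounded. The mirror statement holds as $\delta\to\beta^-$. Hence $\max_\delta\min\{f,g\}$ is attained at the unique crossing point $\delta^*$ with $f(\delta^*)=g(\delta^*)$.

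It remains to show that this common value equals $\lambda^{\mathcal{RR}}:=\lambda_1^{\mathcal{RR}}(B_\beta\setminus\overline{B_\alpha})$. Let $Y$ be the radial first RR eigenfunction. The function $\rho^{N-1}Y'$ satisfies $(\rho^{N-1}Y')'=-\lambda\rho^{N-1}Y<0$, so it is strictly decreasing. By the boundary conditions, $Y'(\alpha)=\hi Y(\alpha)>0$ and $Y'(\beta)=-\ho Y(\beta)<0$ (with the usual modification in the Dirichlet cases). Therefore $Y'$ has a unique zero $\delta_0\in(\alpha,\beta)$. The restrictions of $Y$ to $(\alpha,\delta_0)$ and $(\delta_0,\beta)$ are positive eigenfunctions of the RN and NR problems, so they are the first ones, and $f(\delta_0)=g(\delta_0)=\lambda^{\mathcal{RR}}$. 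By uniqueness of the crossing, $\delta_0=\delta^*$, which gives the formula.

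\emph{Main obstacle.} The delicate step is the blow-up $f(\delta)\to+\infty$ as $\delta\to\alpha^+$ for finite $\hi$. We would establish it by a one-dimensional Poincaré/trace estimate on thin intervals. Actually, this step is not strictly needed: at $\delta_0$ we already have $f=g$, and strict monotonicity of $f$ and $g$ forces $\min\{f,g\}<f(\delta_0)$ for $\delta\neq\delta_0$. So the crossing argument, which avoids the limits altogether, is the route we would take.
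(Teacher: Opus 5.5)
Your strategy is sound. You reduce to the radial Sturm--Liouville problem, get continuity by rescaling to a fixed interval, derive monotonicity from the Green-type comparison identity of Lemma~\ref{lem:ineq:min:n}, and obtain the max--min formula from the unique critical radius $\delta_0$ of the radial $\mathcal{RR}$ eigenfunction. This is a natural way to carry out the argument the paper only cites: it gives no proof here and defers to the planar lemmas of its reference, saying they extend verbatim. The max--min step is correct as you finally present it. The identity $f(\delta_0)=g(\delta_0)=\lambda_1^{\mathcal{RR}}(B_\beta\setminus\overline{B_\alpha})$ together with monotonicity of $f$ and $g$ already gives the formula, so the blow-up of $f$ at $\alpha^+$ is indeed unnecessary.

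However, the monotonicity step as written contains two sign errors that happen to cancel. First, integrating $(\rho^{N-1}y_2')'=-\lambda\rho^{N-1}y_2$ from $\rho$ to $r_2$ and using $y_2'(r_2)=0$ gives
\[
\rho^{N-1}y_2'(\rho)=+\lambda\int_\rho^{r_2}s^{N-1}y_2\,ds>0 .
\]
So the $\mathcal{RN}$ eigenfunction is \emph{increasing}. It must be, since $y_2'(\alpha)=\hi y_2(\alpha)>0$, which is exactly what you use later for $Y$. Likewise, the $\mathcal{NR}$ eigenfunction is decreasing on $(r,\beta)$, not increasing. Second, subtracting the two weak formulations actually gives
\[
\bigl(\lambda^{\mathcal{RN}}(r_1)-\lambda^{\mathcal{RN}}(r_2)\bigr)\int_\alpha^{r_1}y_1y_2\,\rho^{N-1}\,d\rho=+\,r_1^{N-1}\,y_2'(r_1)\,y_1(r_1).
\]
This is precisely the identity of Lemma~\ref{lem:ineq:min:n} with $\partial_\nu y_2=y_2'(r_1)$ on the outer sphere. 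With both signs corrected, the conclusion $\lambda^{\mathcal{RN}}(r_1)>\lambda^{\mathcal{RN}}(r_2)$ stands.

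The $\mathcal{NR}$ case works the same way. For $r_1<r_2$, let $z_i$ be the first $\mathcal{NR}$ eigenfunction on $B_\beta\setminus\overline{B_{r_i}}$. Then
\[
\bigl(\lambda^{\mathcal{NR}}(r_2)-\lambda^{\mathcal{NR}}(r_1)\bigr)\int_{r_2}^{\beta}z_1z_2\,\rho^{N-1}\,d\rho=-\,r_2^{N-1}z_1'(r_2)\,z_2(r_2)>0 .
\]

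You can avoid this bookkeeping entirely. Extend the first $\mathcal{RN}$ eigenfunction on $(\alpha,r_1)$ by the constant $y_1(r_1)$ to $(\alpha,r_2)$. This test function is admissible, keeps the gradient and Robin (or Dirichlet) terms unchanged, and strictly enlarges the denominator. Since $\lambda>0$, this gives $\lambda^{\mathcal{RN}}(r_2)<\lambda^{\mathcal{RN}}(r_1)$ directly, and symmetrically for $\mathcal{NR}$.
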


Thanks to Lemmas~\ref{lem:sigma-delta} and \ref{lem:domain_mono}, we establish the following result.

\begin{lemma}\label{lem:ineq:min}
For  $n \in \mathbb{N}$ and for $t<0$, let $\alpha_t$, $\beta_t$ be as given above. 
Then 
\begin{equation}\label{eq:an:1}
\min \{
\lambda_1^\mathcal{RN}(B_{\alpha_t} \setminus \overline{B_{\alpha}}),
\lambda_1^\mathcal{NR}(B_\beta \setminus \overline{B_{\beta_t}})
\}
\leq 
\lambda_1^\mathcal{RR}(B_\beta \setminus \overline{B_\alpha})
+\rho_n(t), 
\end{equation}
where $\rho_n(t) \to 0$ as $t \to -\infty$. 
\end{lemma}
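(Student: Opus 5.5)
The plan is to combine the monotonicity of Lemma~\ref{lem:domain_mono} with the limit behaviour of $\beta_t-\alpha_t$ from Lemma~\ref{lem:sigma-delta}, splitting into two cases according to the order of $\alpha_t$ and $\beta_t$. Set $L:=\lim_{t\to-\infty}(\beta_t-\alpha_t)\le 0$. Recall that $\alpha_t>\alpha$ is decreasing in $t$ and $\beta_t<\beta$ is increasing in $t$. Hence, as $t\to-\infty$, both $\alpha_t$ and $\beta_t$ are monotone and bounded, so the limits $\alpha_*:=\lim_{t\to-\infty}\alpha_t\in(\alpha,\beta]$ and $\beta_*:=\lim_{t\to-\infty}\beta_t\in[\alpha,\beta)$ exist, and $\beta_*\le\alpha_*$. (Here $\alpha_t\le\beta$ and $\beta_t\ge\alpha$ follow from $|\Omega_t^1|,|\Omega_t^2|\le|\Omega|$; if needed, one can also work without the bounds by using monotonicity only.)

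\textbf{Case $\beta_t\le\alpha_t$.} Choose any $\delta\in[\beta_t,\alpha_t]$. By the monotonicity in Lemma~\ref{lem:domain_mono},
\[
\lambda_1^{\mathcal{RN}}(B_{\alpha_t}\setminus\overline{B_\alpha})\le\lambda_1^{\mathcal{RN}}(B_\delta\setminus\overline{B_\alpha})
\quad\text{and}\quad
\lambda_1^{\mathcal{NR}}(B_\beta\setminus\overline{B_{\beta_t}})\le\lambda_1^{\mathcal{NR}}(B_\beta\setminus\overline{B_\delta}).
\]
Therefore the minimum on the left of \eqref{eq:an:1} is at most $\min\{\lambda_1^{\mathcal{RN}}(B_\delta\setminus\overline{B_\alpha}),\lambda_1^{\mathcal{NR}}(B_\beta\setminus\overline{B_\delta})\}$. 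By the max--min formula of Lemma~\ref{lem:domain_mono}, this is at most $\lambda_1^{\mathcal{RR}}(B_\beta\setminus\overline{B_\alpha})$. For the boundary values $\delta=\alpha$ or $\delta=\beta$ we instead use the limiting values, namely $+\infty$ in one slot and the other eigenvalue being small. Alternatively, we perturb $\delta$ slightly into $(\alpha,\beta)$ using continuity. In this case we may take $\rho_n(t)=0$.

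\textbf{Case $\beta_t>\alpha_t$.} Since $\beta_t-\alpha_t$ increases in $t$ and tends to $L\le0$, this case can happen only when $0<\beta_t-\alpha_t$, which shrinks to $0$ as $t\to-\infty$. If $L<0$, this case does not occur for $t$ sufficiently negative. Take $\delta_t:=\alpha_t$. By the first monotonicity we have $\lambda_1^{\mathcal{RN}}(B_{\alpha_t}\setminus\overline{B_\alpha})=\lambda_1^{\mathcal{RN}}(B_{\delta_t}\setminus\overline{B_\alpha})$. Moreover,
\[
\lambda_1^{\mathcal{NR}}(B_\beta\setminus\overline{B_{\beta_t}})\le\lambda_1^{\mathcal{NR}}(B_\beta\setminus\overline{B_{\delta_t}})+\omega(\beta_t-\alpha_t),
\]
where $\omega$ is a modulus of continuity of $r\mapsto\lambda_1^{\mathcal{NR}}(B_\beta\setminus\overline{B_r})$. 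We need uniform continuity of this map on a compact interval $[\alpha',\beta']\subset(0,\beta)$ containing all relevant $\alpha_t,\beta_t$ for $t\le t_0$. This holds because $\alpha_t\to\alpha_*>\alpha>0$ and $\beta_t\to\beta_*=\alpha_*$, and $\alpha_*<\beta$ (otherwise $\beta_t>\alpha_t$ would force $\beta_t\ge\beta$). Then apply the max--min formula with $\delta_t$, and define $\rho_n(t):=\omega(\max\{\beta_t-\alpha_t,0\})\to0$.

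\textbf{Main obstacle.} The delicate point is the second case. There we must ensure that the radii stay in a compact subinterval of $(0,\beta)$, where the continuity of Lemma~\ref{lem:domain_mono} is uniform. This requires handling the degenerate possibilities $\alpha_*=\beta$ or $\beta_*=\alpha$. In those situations, one of the two eigenvalues on the left of \eqref{eq:an:1} becomes small. I would treat them directly by noting that the minimum is then bounded by an eigenvalue tending to a value not exceeding $\lambda_1^{\mathcal{RR}}(B_\beta\setminus\overline{B_\alpha})$, again via the max--min characterization.
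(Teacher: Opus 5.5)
Your argument is correct, but it organizes the cases differently from the paper. The paper first treats the situation where $\alpha_{t_0}\ge\beta$ and $\beta_{t_0}\le\alpha$ for some $t_0$, comparing directly with $\lambda_1^{\mathcal{RN}}$ and $\lambda_1^{\mathcal{NR}}$ of $B_\beta\setminus\overline{B_\alpha}$. Otherwise it always inserts $\delta=\alpha_t$ (or $\beta_t$) into the max--min formula of Lemma~\ref{lem:domain_mono}. It then distinguishes $|\Omega|=|B_\beta\setminus\overline{B_\alpha}|$, handled by continuity, from $|\Omega|>|B_\beta\setminus\overline{B_\alpha}|$, handled by monotonicity since eventually $\alpha_t>\beta_t$. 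You instead split by the sign of $\beta_t-\alpha_t$. When $\beta_t\le\alpha_t$, an intermediate $\delta$ together with two applications of monotonicity gives the bound with $\rho_n(t)=0$. This absorbs both the paper's first case and its case $|\Omega|>|B_\beta\setminus\overline{B_\alpha}|$ in one step. Continuity is then needed only when $\beta_t>\alpha_t$, which matters only if $L=0$. Your version also makes explicit why continuity applies there, a point the paper leaves implicit.

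Two corrections, neither fatal.

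First, the parenthetical claim that $\alpha_t\le\beta$ and $\beta_t\ge\alpha$ follow from $|\Omega_t^i|\le|\Omega|$ is false. The class $\mathcal{K}_{\alpha,\beta}^N$ allows $|\Omega|>|B_\beta\setminus\overline{B_\alpha}|$, and then $\alpha_t>\beta$ or $\beta_t<\alpha$ can occur; this is precisely the paper's first case. Hence $\alpha_*\in(\alpha,\beta]$ and $\beta_*\in[\alpha,\beta)$ need not hold. You never actually need these bounds. In the case $\beta_t\le\alpha_t$, choose $\delta\in[\beta_t,\alpha_t]\cap(\alpha,\beta)$. This set is nonempty because $\alpha_t>\alpha$ and $\beta_t<\beta$ always hold, as $|\Omega_t^1|,|\Omega_t^2|>0$. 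So the discussion of the boundary values $\delta=\alpha,\beta$ is unnecessary. In the case $\beta_t>\alpha_t$ one has $\alpha<\alpha_t<\beta_t<\beta$ automatically.

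Second, the ``main obstacle'' you describe does not arise. If $\beta_t>\alpha_t$ for arbitrarily negative $t$, then $L=0$. Moreover $\beta_*=\alpha_*\ge\alpha_t>\alpha$ and $\alpha_*=\beta_*\le\beta_t<\beta$. So the degenerate possibilities $\alpha_*=\beta$ or $\beta_*=\alpha$ are excluded. Continuity of $r\mapsto\lambda_1^{\mathcal{NR}}(B_\beta\setminus\overline{B_r})$ at the single point $\alpha_*\in(\alpha,\beta)$ already suffices, with no uniform modulus needed.
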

\begin{proof}
Assume that there exists $t_0 < 0$ such that $\alpha_{t_0}\ge \beta$ and $\beta_{t_0}\leq \alpha$. 
Then, by Lemma~\ref{lem:domain_mono}, for any $t<t_0$ we have 
$$
\lambda_1^{\mathcal{RN}}(B_{\alpha_t} \setminus \overline{B_{\alpha}}) \leq \lambda_1^{\mathcal{RN}}(B_{\beta} \setminus \overline{B_{\alpha}}) 
\quad \text{and}\quad 
\lambda_1^\mathcal{NR}(B_\beta \setminus \overline{B_{\beta_t}}) \leq \lambda_1^{\mathcal{NR}}(B_{\beta} \setminus \overline{B_{\alpha}}).
$$
From the variational characterization \eqref{eq:lambda1}, we get
$$
\lambda_1^{\mathcal{RN}}(B_{\beta} \setminus \overline{B_{\alpha}})\le \lambda_1^\mathcal{RR}(B_\beta \setminus \overline{B_\alpha})
\quad \text{and}\quad 
\lambda_1^{\mathcal{NR}}(B_{\beta} \setminus \overline{B_{\alpha}})\le \lambda_1^\mathcal{RR}(B_\beta \setminus \overline{B_\alpha}).
$$ 
Thus, we obtain \eqref{eq:an:1} with $\rho_n(t)=0$ for this case. 

Assume that $\alpha_{t}< \beta$ for every $t<0$. 
Then, by Lemma~\ref{lem:domain_mono}, we have 
\begin{align}\label{eq:mono1}
\lambda_1^{\mathcal{RR}}(B_{\beta} \setminus \overline{B_{\alpha}})
\geq
\min \{
\lambda_1^{\mathcal{RN}}(B_{\alpha_t} \setminus \overline{B_{\alpha}}),
\lambda_1^{\mathcal{NR}}(B_{\beta} \setminus \overline{B_{\alpha_t}})
\}.
\end{align}
Consider the following two cases.
 
 \noi \textbf{Case 1:} $|\Omega| = |B_\beta \setminus \overline{B_\alpha}|$. In this case, by Lemma~\ref{lem:sigma-delta}, we have  $\lim\limits_{t\rightarrow -\infty}(\alpha_t- \beta_t)=0$.
 Thus, we can write 
 \begin{align}
 \lambda_1^{\mathcal{NR}}(B_{\beta} \setminus \overline{B_{\alpha_t}}) = \lambda_1^{\mathcal{NR}}(B_{\beta} \setminus \overline{B_{\beta_t}})- \rho_n(t), 
 \end{align}
 where $\rho_n(t)>0$ and $\rho_n(t)\rightarrow 0$ as $t\rightarrow -\infty$.
 Therefore, we get \eqref{eq:an:1}:
 \begin{align}\label{eq:epsilon1}
 \lambda_1^{\mathcal{RR}}(B_{\beta} \setminus \overline{B_{\alpha}})
 \geq
 \min \{
 \lambda_1^{\mathcal{RN}}(B_{\alpha_t} \setminus \overline{B_{\alpha}}),
 \lambda_1^{\mathcal{NR}}(B_{\beta} \setminus \overline{B_{\beta_t}})
 \}-\rho_n(t).
 \end{align}
\noi \textbf{Case 2:} $|\Omega| > |B_\beta \setminus \overline{B_\alpha}|$.  In this case, by Lemma~\ref{lem:sigma-delta}, there exists $t_n<0$ such that 
$\alpha_t> \beta_t$ whenever $t\le t_n$. 
Thus, using again Lemma~\ref{lem:domain_mono},  we obtain 
\begin{align}
\lambda_1^{\mathcal{NR}}(B_{\beta} \setminus \overline{B_{\alpha_t}}) > \lambda_1^{\mathcal{NR}}(B_{\beta} \setminus \overline{B_{\beta_t}})
\quad \text{for any}~
t\leq t_n,
\end{align}
and hence \eqref{eq:an:1} directly follows from \eqref{eq:mono1}. 

The case where $\beta_t > \alpha$ for every $t<0$ can be handled using similar arguments by substituting $\alpha_t$ with $\beta_t$ in \eqref{eq:an:1}.
\end{proof}

\medskip
With the established auxiliary results in hand, we now conclude the proof of Theorem~\ref{thm:FK}. 
\begin{proof}[Proof of Theorem~\ref{thm:FK}]

Thanks to Lemma~\ref{lem:ineq:min:n}, for any $n \in \mathbb{N}$ and $t<0$ we have
\begin{equation}\label{eq:ineq-main1:2}
\lambda_1^\mathcal{RR}(\Omega, V_n) 
< 
\min \{
\lambda_1^\mathcal{RN}(\Omega_t^1,V_n),
\lambda_1^\mathcal{NR}(\Omega_t^2,V_n)
\}.
\end{equation}
From \eqref{eq:lambda1-pert:1}, 
we get the bounds 
\begin{equation}\label{eq:ineq-main1:3}
\lambda_1^\mathcal{RN}(\Omega_t^1,V_n)
\leq
\lambda_1^\mathcal{RN}(\Omega_t^1)
+\|V_n\|_\infty
\quad \text{and} \quad 
\lambda_1^\mathcal{NR}(\Omega_t^2,V_n)
\leq
\lambda_1^\mathcal{NR}(\Omega_t^2)
+\|V_n\|_\infty.
\end{equation}
Recalling that $\lambda_1^\mathcal{RR}(\Omega,V_n) = \lambda_1^\mathcal{RR}(\Omega)$  by Proposition~\ref{prop:approx}, we deduce from \eqref{eq:ineq-main1:2} and \eqref{eq:ineq-main1:3} that
$$
\lambda_1^\mathcal{RR}(\Omega) 
< 
\min \{
\lambda_1^\mathcal{RN}(\Omega_t^1),
\lambda_1^\mathcal{NR}(\Omega_t^2)
\}
+\|V_n\|_\infty.
$$
Combining \eqref{eq:RFK-ND}, \eqref{eq:RFK-RN}, and Lemma~\ref{lem:ineq:min}, for each $n\in \mathbb{N}$ we get
\begin{equation}\label{eq:ineq-main2}
\min \{
\lambda_1^\mathcal{RN}(\Omega_t^1),
\lambda_1^\mathcal{NR}(\Omega_t^2)
\}
\leq 
\lambda_1^\mathcal{RR}(B_{\beta} \setminus \overline{B_{\alpha}})
+
\rho_n(t),
\end{equation} 
where $\rho_n(t) \to 0$ as $t \to -\infty$.
Thus, we arrive at the inequality
$$
\lambda_1^\mathcal{RR}(\Omega) 
\leq
\lambda_1^\mathcal{RR}(B_\beta \setminus \overline{B_\alpha}) 
+\|V_n\|_\infty +
\rho_n(t). 
$$
For a given $\varepsilon>0$, we can choose $n_0\in \mathbb{N}$ so that $\|V_{n_0}\|<\varepsilon,$ and then for this $V_{n_0}$ we choose $t_0 < 0$ so that $\rho_{n_0}(t_0)<\varepsilon$. Thus,
$$
\lambda_1^\mathcal{RR}(\Omega) 
\leq
\lambda_1^\mathcal{RR}(B_\beta \setminus \overline{B_\alpha})
+2\varepsilon. 
$$
Since $\varepsilon$ is arbitrary, we derive the desired Hersch--Weinberger inequality
$$
\lambda_1^\mathcal{RR}(\Omega) 
\leq
\lambda_1^\mathcal{RR}(B_\beta \setminus \overline{B_\alpha}).
\qedhere
$$
\end{proof}

\section{Properties of the effectless cut}\label{sec:surface}
In this section, we propose a definition of the effectless cut in any higher dimension, and we
study some of its general topological and geometrical properties, using the framework of dynamical systems.
As above, $\Omega\subset\mathbb R^N$, $N \geq 2$, is a bounded $C^{2,\tht}$-smooth domain of the form $\Omega = \Omo \setminus \overline{\Omi}$, whose boundary $\partial\Omega$ has two connected components $\pa \Omi$ and $\pa \Omo$. 
Assume that $\overline{\Omi}$ and $\overline{\Omo}$ are $C^2$-diffeomorphic to a closed $N$-dimensional ball, and $\Omega$ is diffeomorphic to $\mathbb S^{N-1}\times (0,1)$. 
Hereinafter, $\mathbb S^k$ stands for the $k$-dimensional sphere. 

 We consider a Morse function $u \in C^{2,\tht}(\overline{\Omega})$ satisfying the following assumptions:
\begin{enumerate}[ label=(\textbf{A\arabic*})]
\item\label{as:u:2} The gradient-descent vector field $-\nabla u$ is transverse to $\partial\Omega$ and points inward along $\partial\Omega$, that is,
$$
\frac{\partial u}{\partial \nu} > 0 
\quad \text{on}~ \pa\Omega.
$$ 
\item\label{as:u:3} $u$ has no maximum points in $\Omega$. 
\end{enumerate}
Clearly, the negative first eigenfunction of the problem \eqref{eq:D} always satisfies the assumptions \ref{as:u:2} and \ref{as:u:3}. 
Moreover, for an appropriate potential $V$, the negative first eigenfunction of the problem \eqref{cutproblem2d-pert} is a Morse functions satisfying \ref{as:u:2} and \ref{as:u:3}, see Proposition~\ref{prop:approx}. 

Since $\mathbb R^N$ is diffeomorphic, via stereographic projection, to $\mathbb S^N$ with one point removed, it will be convenient to regard $\Omega$ as a subset of $\mathbb S^N$. 
In view of our assumptions on $\Omega$ and the assumption \ref{as:u:2}, 
one can extend $u$ to all of $\mathbb S^N$ as a Morse function with exactly two maximum points $\alpha_+$,  $\alpha_-$ such that $\alpha_+ \in \Omi$ and $\alpha_- \in \mathbb{S}^N \setminus \overline{\Omo}$. 
In what follows, we identify $u$ with such an extension and denote by $\phi^t: \mathbb S^N \to\mathbb S^N$, $t \in \mathbb{R}$, the gradient-descent flow of $u$, see \eqref{eq:cauchy} with $\phi^t(\cdot) := \phi(t,\cdot)$.  
Note that $\phi^t$ is a diffeomorphism for each fixed $t$. 

Let us introduce some standard notation from dynamical systems theory and recall some general facts about gradient flows, see, e.g., \cite{grin,Rob,Sm}. 
Let $\mathcal C$ be the set of all critical points of $u$ (or, equivalently, fixed points of $\phi^t$) in $\mathbb{S}^N$. 
For any $p\in \mathcal{C}$, the stable and unstable manifolds of $p$ are defined, respectively, as  
\begin{align}
\label{eq:unstable0}
W^s_p &= \{
x \in \mathbb{S}^N:~ \phi^t(x) \to p ~\text{as}~ t \to +\infty\},\\
\label{eq:stable0}
W^u_p &= \{
x \in \mathbb{S}^N:~ \phi^t(x) \to p ~\text{as}~ t \to -\infty\}.
\end{align}
It follows directly from the definition that if $p, r \in \mathcal{C}$ and $p \neq r$, then
\begin{equation}\label{eq:justref}
W^u_p \cap W^u_r = \varnothing
\quad \text{and} \quad 
W^s_p \cap W^s_r = \varnothing.
\end{equation}
\begin{definition}
    For $p,r\in \mathcal{C},$ we say $W^u_p$ and $W^s_r$ intersect {\it transversely} if either $W^u_p\cap W^s_r= \varnothing,$ or $W^u_p\cap W^s_r\neq \varnothing$ and $T_x W^u_p + T_x W^s_r = T_x \mathbb{S}^N$ for any $x \in W^u_p\cap W^s_r$.

 We say a Morse function $u$ is a \textit{Morse--Smale function} if $W^u_p$ intersects $W^s_r$ transversely for every $p,r\in \mathcal{C}$.
If $p$ and $r$ are saddles, then the intersection $W^u_p\cap W^s_r$ is called a {\it heteroclinic intersection}, see Figure~\ref{AE}. 
\end{definition}

In this section, alongside the assumptions \ref{as:u:2} and \ref{as:u:3}, we make an additional significant assumption regarding the Morse function $u$:
\begin{enumerate}[label=(\textbf{A\arabic*}), start=3] \item\label{as:u:1} $u$ is a Morse--Smale function.
\end{enumerate}

In Proposition~\ref{prop:approx}, we proved that the first eigenfunction of \eqref{eq:D} can always be approximated by the first eigenfunctions of \eqref{cutproblem2d-pert}, which are Morse functions. 
In Proposition~\ref{prop:approx:MS} below, we strengthen this statement by showing that the approximating first eigenfunctions of \eqref{cutproblem2d-pert} can be chosen as Morse--Smale functions. That is, informally speaking, there are ``many'' first eigenfunctions satisfying \ref{as:u:2}, \ref{as:u:3}, and \ref{as:u:1}.

Let us now decompose the set $\mathcal C$ of critical points of $u$ in $\mathbb{S}^N$ as
$$
\mathcal C
=
\mathcal{C}_0 \cup \mathcal{C}_1 \cup \dots \cup \mathcal{C}_N,
$$
where $\mathcal C_k$ is the set of critical points with the Morse index $k$. 
That is, every $p\in\mathcal C_k$ has the unstable manifold $W^u_p$ diffeomorphic to $\mathbb R^k$; in other words, $W^u_p$ is a $k$-cell. 
In the same way, the stable manifold $W^s_p$ is diffeomorphic to $\mathbb R^{N-k}$, that is, $W^s_p$ is a $(N-k)$-cell. 
Points in $\mathcal C_0$ are called {\it sinks} (i.e., minimum points of $u$), points in $\mathcal C_N$ are called {\it sources} (i.e., maximum points of $u$), and all other points in $\mathcal C$ are called {\it saddles}. 
By our assumptions on $u$, we have $\mathcal C_N=\{\alpha_-,\alpha_+\}$. 
Moreover, $W^u_{\alpha_\pm}$ is an $N$-dimensional open (topological) ball. 
We will call $W^u_{\alpha_\pm}$ the \textit{basin} of $\alpha_\pm$.  
For any saddle $p$, every connected component of $W^s_p\setminus \{p\}$ is called a \textit{stable separatrix}, and every connected component of $W^u_p\setminus \{p\}$ is called an \textit{unstable separatrix}. 
In particular, if $p \in \mathcal{C}_{N-1}$, then stable separatrices are one-dimensional and there are exactly two of them, while if $p \in \mathcal{C}_{q}$ with $q \in \{1,\dots,N-2\}$, then $W^s_p\setminus \{p\}$ is connected, and hence there is exactly one stable separatrix. 

By the transversality, if $W^u_p\cap W^s_r\neq\varnothing$ for some $p \in \mathcal{C}_{l}$ and $r \in \mathcal{C}_{k}$, then $l > k$.
In particular, for any $r\in \mathcal{C}_{N-1}$, the one-dimensional set $W^s_r$ is not involved in heteroclinic intersections, i.e., any (stable) separatrix in $W^s_r$ emanates from a source, which is either $\alpha_+$ or $\alpha_-$. 

A closed set $\Lambda \subset \mathbb{S}^N$ is called an \textit{attractor} of the flow $\phi^t$, if $\Lambda$ has a compact neighborhood $U \subset \mathbb{S}^N$ such that $\phi^t(U) \subset \text{int}(U)$ for any $t>0$, and $\Lambda = \bigcap_{t \geq 0} \phi^t(U)$. 
Such $U$ is called a \textit{trapping neighborhood of $\Lambda$}. 
A set is called a \textit{repeller} of $\phi^t$, if it is an attractor of $\phi^{-t}$.  
Any attractor or repeller $\Lambda$ is an $\phi^t$-\textit{invariant set} for any $t \in \mathbb{R}$, i.e., $\phi^t(\Lambda) = \Lambda$. 
The result of \cite[Theorem~1.1]{Glob} says that if, for some subset $\mathcal{C}^* \subset \mathcal{C}$, the union $\bigcup_{r \in \mathcal{C}^*} W^s_r$ is a closed set, then it is necessarily a repeller. 
In the same way, if $\bigcup_{r \in \mathcal{C}^*} W^u_r$ is a closed set, then it is an attractor. 
Since trapping neighborhoods can be chosen as sublevel sets of smooth Lyapunov functions, $U$ can always be taken to be a smooth $N$-dimensional manifold with boundary, and each connected component of $\partial U$ is a hypersurface on $\mathbb{S}^N$.

\smallskip
Let us list a few general basic facts which will be used in the arguments below:
\begin{enumerate}[label = {$(\star)_{\arabic*}$}]
    \item\label{star1} 
    $\mathbb S^N=\bigcup_{p\in\mathcal C}W^u_p=\bigcup_{p\in\mathcal C}W^s_p$, where the unions are disjoint.
    \item\label{star2}
    $W^u_p\cap W^s_r\neq\varnothing$ 
    if and only if $W^u_r\subset\overline{W^u_p}$
    and $W^s_p\subset\overline{W^s_r}$. 
    Moreover,
    $$
    \overline{W^u_p} = \bigcup_{r \in\mathcal C:\, W^u_p \cap W^s_r \neq \varnothing} W^u_r
    \quad \text{and} \quad  
    \overline{W^s_r} = \bigcup_{p \in\mathcal C:\, W^u_p \cap W^s_r \neq \varnothing} W^s_p.
    $$
    \item\label{star3} 
    If $W^u_{\sigma} \cap W^s_p \neq \varnothing$ and $W^u_p \cap W^s_r \neq \varnothing$, then $W^u_{\sigma} \cap W^s_r \neq \varnothing$.
    \item\label{star4} 
    $\overline{W^s_r}$ of any $r \in \mathcal{C}$ contains a source. 
\end{enumerate}

\medskip
For the convenience of further analysis, let us denote
$$
S_+=\pa\Omi,~~
S_-=\pa\Omo, 
\quad \text{and} \quad 
G_\pm =\bigcup\limits_{t > 0,~ x \in S_\pm}\phi^{t}(x).
$$
We also occasionally denote by $A \sqcup B$ the disjoint union of some sets $A$ and $B$, for clarity. 

\medskip
Recall from Remark~\ref{rem:wein} that \textsc{Weinberger} \cite{wein} defined the effectless cut $\widetilde{\gamma}$ as $\partial G \cap \Omega$, where the set $G$ is given by \eqref{eq:G}. 
However, this definition has the drawback that $\widetilde{\gamma}$ can be irregular. 
In particular, $G$ might contain ``cracks'' and ``cusps'', cf.\ \cite{band2020defining}. 
In order to regularize $\widetilde{\gamma}$, in \cite{ABG2}, it was proposed to define the effectless cut as $\partial (\mathrm{Int}(\overline{G})) \cap \Omega$. This set has neither ``cracks'' nor isolated points. 

The analysis performed in the proof of Theorem~\ref{thm:FK} in Section~\ref{sec:approx:cut} suggests another way to define a regularized version of the effectless cut. 
Namely, we define the {\it effectless cut} $E$ for the pair $(\Omega,u)$ as 
\begin{equation}\label{defE}
E=\overline{G_-}\cap\overline{G_+}.    
\end{equation}
The aim of this section is to establish a few fundamental properties of $E$. 

\begin{proposition}\label{prop:efc} 
The effectless cut $E$ for the pair $(\Omega,u)$ has the following properties:
\begin{enumerate}[label={\rm(\arabic*)}]
    \item\label{prop:efc:1} 
$E=\bigcup_{p\in\mathcal C_E}W^u_p$, where $\mathcal{C}_E$ consists of all critical points $r$ for which $\alpha_\pm \in \overline{W^s_r}$. 
    \item\label{prop:efc:3} 
    The topological dimension of $E$ is $N-1$.
    \item\label{prop:efc:4} 
    $E$ is an attractor of $\phi^t$.
    \item\label{prop:efc:2} 
    $E$ is a closed connected subset of $\Omega$ that divides $\Omega$ into two subdomains $\Omega_-$, $\Omega_+$ such that $S_\pm \subset \overline{\Omega_\pm}$. 
\end{enumerate}
\end{proposition}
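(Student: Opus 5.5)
The plan is to first identify $\overline{G_\pm}$ in terms of basins, and then read off all four properties from the structural facts \ref{star1}--\ref{star4}. Since $-\nabla u$ points inward along $S_\pm$ and $u$ is extended to $\mathbb{S}^N$ with only the sources $\alpha_\pm$ outside $\Omega$, every backward trajectory starting at a point of $S_+$ converges to $\alpha_+$. Conversely, every point of $W^u_{\alpha_+}\setminus\{\alpha_+\}$ crosses $S_+$: the sphere $S_+$ bounds the ball $\overline{\Omi}$, and this ball is a trapping region for $\phi^{-t}$ whose only critical point is $\alpha_+$. Hence $G_+ = W^u_{\alpha_+}\setminus \overline{\Omi}$, and similarly $G_-=W^u_{\alpha_-}\setminus(\mathbb S^N\setminus \Omo)$. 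Since $\overline{\Omi}\cup\overline{G_+}\supset\overline{W^u_{\alpha_+}}$, we get $\overline{G_\pm}\cap\Omega = \overline{W^u_{\alpha_\pm}}\cap\Omega$.

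\textbf{Property \ref{prop:efc:1}.} By \ref{star2}, $\overline{W^u_{\alpha_\pm}}=\bigcup\{W^u_r : W^u_{\alpha_\pm}\cap W^s_r\neq\varnothing\}$, and the condition $W^u_{\alpha_\pm}\cap W^s_r\ne\varnothing$ is equivalent to $\alpha_\pm\in\overline{W^s_r}$. Unstable manifolds are disjoint by \eqref{eq:justref}. Therefore the intersection of the two closures is exactly $\bigcup_{r\in\mathcal C_E}W^u_r$. This set avoids $\overline{\Omi}$ and $\mathbb S^N\setminus\Omo$, because $\alpha_\mp\notin\overline{W^u_{\alpha_\pm}}$ there; hence $E\subset\Omega$.

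\textbf{Property \ref{prop:efc:3}.} $E$ is closed as an intersection of closures, and it is a finite union of cells. The sources are excluded, since $\alpha_+\notin\overline{W^s_{\alpha_-}}$. So the dimension is at most $N-1$. For the lower bound, I would show that some $r\in\mathcal C_{N-1}$ lies in $\mathcal C_E$. The separation argument below shows that $E$ separates $S_+$ from $S_-$, and a set of dimension at most $N-2$ cannot separate $\mathbb{S}^N$ (Alexander duality / dimension theory), so the dimension is exactly $N-1$.

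\textbf{Property \ref{prop:efc:4}.} $E$ is a closed union of unstable manifolds, so \cite[Theorem~1.1]{Glob} makes it an attractor.

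\textbf{Property \ref{prop:efc:2}.} Set $\Omega_\pm := (W^u_{\alpha_\pm}\cap\Omega)\setminus E$. Equivalently, $\Omega_\pm$ consists of the points of $\Omega\setminus E$ lying in $\overline{G_\pm}$. These sets are open and disjoint, because the basins are disjoint. Their union is $\Omega\setminus E$. Indeed, take a point $x\in\Omega$ not in either basin. Then $x\in W^u_p$ for a saddle $p$. By \ref{star4} applied to $\overline{W^s_p}$, and since $p\neq\alpha_\pm$, at least one source lies in $\overline{W^s_p}$. I need both. If only $\alpha_+$ did, then all of $\overline{W^s_p}$ would lie in sets reachable from $\alpha_+$ only, and I would argue that $W^u_p$ then lies in $\overline{W^u_{\alpha_+}}\setminus\overline{W^u_{\alpha_-}}$, which should be part of the open interior of $\overline{G_+}$. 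This is the main obstacle: I must replace $\Omega_\pm$ by $\mathrm{Int}(\overline{G_\pm})\cap\Omega$ and show that these interiors cover $\Omega\setminus E$.

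I would try to settle this using trapping neighbourhoods. The set $R_\pm:=\bigcup\{W^s_r:\alpha_\mp\notin\overline{W^s_r}\}$ should be closed by \ref{star2} and \ref{star3}, so it is a repeller whose complement is open and flows into $E$. Connectedness of $E$ then follows because $\Omega_\pm$ are connected: they are images of the connected set $S_\pm\times(0,\infty)$ union the interiors. Moreover $\Omega$ is connected with $\Omega\setminus E=\Omega_+\sqcup\Omega_-$. If $E$ were disconnected, a component would be removable without disconnecting, contradicting the separation via a Mayer--Vietoris argument on $\mathbb S^{N-1}\times(0,1)$. Finally, $S_\pm\subset\overline{\Omega_\pm}$ holds because $\phi^t(S_\pm)\subset G_\pm$ for all small $t>0$.
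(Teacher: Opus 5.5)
Your plan and your treatment of (1) and (3) match the paper. However, there is a genuine gap in (4): it is not established. Since your lower bound in (2) rests on the separation statement from (4), that part inherits the gap.

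\textbf{The covering of $\Omega\setminus E$.} Your initial choice $\Omega_\pm=W^u_{\alpha_\pm}\cap\Omega$ does not cover $\Omega\setminus E$. For a critical point $r\neq\alpha_\pm$ with only one source in $\overline{W^s_r}$, the cell $W^u_r$ lies in neither basin nor in $E$. After switching to $\mathrm{Int}(\overline{G_\pm})\cap\Omega$, you leave the covering as ``the main obstacle'' with only a vague appeal to trapping neighbourhoods. Also, the complement of the repeller $R_+$ is attracted to $\overline{W^u_{\alpha_-}}$, not to $E$.

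The missing observation is short. Put $B_\pm=W^u_{\alpha_\pm}$. Then \ref{star4} and \ref{star2} give $W^u_r\subset\overline{B_+}$ or $W^u_r\subset\overline{B_-}$ for every critical point $r$, so \ref{star1} yields $\overline{B_+}\cup\overline{B_-}=\mathbb{S}^N$. Consequently:
\begin{itemize}
\item $\mathbb{S}^N\setminus\overline{B_\mp}$ is open, contained in $\overline{B_\pm}$, and disjoint from $E=\overline{B_+}\cap\overline{B_-}$;
\item $\mathbb{S}^N\setminus E=(\mathbb{S}^N\setminus\overline{B_-})\sqcup(\mathbb{S}^N\setminus\overline{B_+})$.
\end{itemize}
This is precisely the paper's decomposition $\mathbb{S}^N=H_-\sqcup E\sqcup H_+$. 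It also gives $\mathrm{Int}(\overline{G_\pm})\cap\Omega=\Omega\setminus\overline{B_\mp}$. After that, your sandwich argument $G_\pm\subset\Omega_\pm\subset\overline{G_\pm}$ for the connectedness of $\Omega_\pm$ is correct, and simpler than the paper's.

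\textbf{The connectedness of $E$.} Your argument fails as stated. ``A component would be removable without disconnecting'' is not contradictory by itself. For example, a $2$-sphere together with an isolated point inside it is a disconnected compact subset of $\mathbb{S}^3$ whose complement has exactly two components.

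What excludes this here is that $E$ is the common boundary of the two sides: $E=\overline{B_+}\cap\overline{B_-}$ with $\overline{B_+}\cup\overline{B_-}=\mathbb{S}^N$. You never use this. With it, apply the Mayer--Vietoris sequence in \v{C}ech cohomology to the closed cover $\{\overline{B_+},\overline{B_-}\}$ of $\mathbb{S}^N$. Together with $\check H^1(\mathbb{S}^N)=0$ for $N\ge 2$ and the connectedness of $\overline{B_\pm}$, this shows that $E$ is connected (unicoherence of $\mathbb{S}^N$). Running the sequence on $\Omega\cong\mathbb{S}^{N-1}\times(0,1)$, as you suggest, does not work as stated for $N=2$, where $H^1(\Omega)\neq 0$.

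\textbf{Once these points are supplied.} $E$ separates $\mathbb{S}^N$, and your dimension-theoretic lower bound (a closed set of dimension at most $N-2$ cannot separate $\mathbb{S}^N$) is a valid alternative to the paper's route. The paper instead uses the connectedness of the repeller $R$ to find a saddle in $\mathcal{C}^*_{N-1}$. Likewise, the unicoherence argument would replace the paper's proof of connectedness of $E$, which uses a connected trapping neighbourhood $U$ and the Jordan--Brouwer theorem.
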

\begin{proof} 
\ref{prop:efc:1} 
Recall that $\mathcal C_N=\{\alpha_-,\alpha_+\}$, and let $B_\pm := W^u_{\alpha_\pm}$ be the basin of $\alpha_\pm$. 
Observe that $G_\pm \subset B_\pm$ and
$$
B_-\setminus G_- = \mathbb{S}^N \setminus \Omo
\quad \text{and} \quad 
B_+\setminus G_+ = \overline{\Omi}.
$$
Therefore, we have $\overline{B_-\setminus G_-}\cap\overline{B_+\setminus G_+}=\varnothing$, and hence $E=\overline{B_-}\cap\overline{B_+}$. 
We obtain from \ref{star2} that
\begin{equation}\label{eq:bpm1}
\overline{B_\pm} = \bigcup_{\sigma \in\mathcal C:\, B_\pm \cap W^s_\sigma \neq \varnothing} W^u_\sigma.
\end{equation}
Since, by \ref{star1}, for any $x\in E$ there exists $r \in \mathcal{C}$ such that $x\in W^u_r$, we deduce from \eqref{eq:bpm1} that $B_\pm \cap W^s_r \neq \varnothing$ and $W^u_r \subset (\overline{B_-}\cap\overline{B_+}) = E$. 
That is, again in view of \eqref{eq:bpm1}, we conclude that $E=\bigcup_{p\in\mathcal C_E}W^u_p$, where $\mathcal{C}_E$ consists of all critical points $r$ for which $B_\pm \cap W^s_r \neq \varnothing$, and hence $\alpha_\pm \in \overline{W^s_r}$. 

\ref{prop:efc:3}
To show that the topological dimension of $E$ is $N-1$, it suffices to prove that $E$ contains at least one $(N-1)$-cell. 
Denote 
$$
R
=
\mathcal C_N\cup\bigcup\limits_{p\in \mathcal{C}_{N-1}}W^s_p
\equiv
\bigcup\limits_{p\in \mathcal{C}_{N-1} \cup \mathcal{C}_{N}}W^s_p.
$$
It follows from the discussion above that $R$ consists of one-dimensional separatrices together with their end points, see also Figure~\ref{AE}.  
Therefore, $R$ is a one-dimensional closed set, and hence $R$ is a repeller of $\phi^t$ by \cite[Theorem~1.1]{Glob}. 
Moreover, since the ``complementary'' set (which is, in fact, an attractor of $\phi^t$)
$$
A = \bigcup\limits_{p\in \mathcal{C}_{0} \cup \dots \cup \mathcal{C}_{N-2}}W^u_p
$$
has, by definition, the dimension at most $N-2$, we again deduce from \cite[Theorem~1.1]{Glob} that the repeller $R$ is a connected set. 
Consequently, $\mathcal C_{N-1}$ must contain at least one point $p$ whose two one-dimensional separatrices belong to different basins, and we denote the subset of all such points by $\mathcal C^*_{N-1}$. 
Let us show that $W^u_p \subset E$ for any $p\in\mathcal C^*_{N-1}$. 
Indeed, we know from \ref{star2} that  
\begin{equation}\label{eq:ww1}
\overline{W^u_p} 
=
\bigcup\limits_{r \in\mathcal C:\,W^u_p\cap W^s_r\neq\varnothing}W^u_r.
\end{equation}
Recalling that $B_\pm := W^u_{\alpha_\pm}$ and since $B_\pm \cap W^s_p \neq\varnothing$, 
we get from \ref{star3} that 
$B_\pm \cap  W^s_r\neq\varnothing$ 
for any $r \in \mathcal{C}$ such that $W^u_p\cap W^s_r\neq\varnothing$. 
Thus, thanks to \ref{star2}, we obtain  $W^u_r \subset (\overline{B_-}\cap\overline{B_+}) = E$ for any such $r$, which yields $W^u_p \subset E$.
As a consequence, the topological dimension of $E$ is at least $N-1$. 
On the other hand, $E$ cannot have dimension $N$, since the openness of $B_\pm$ implies that $\alpha_\pm \not\in \mathcal{C}_E$.

\begin{figure}[!ht]
    \centering
    \includegraphics[width=0.7\linewidth]{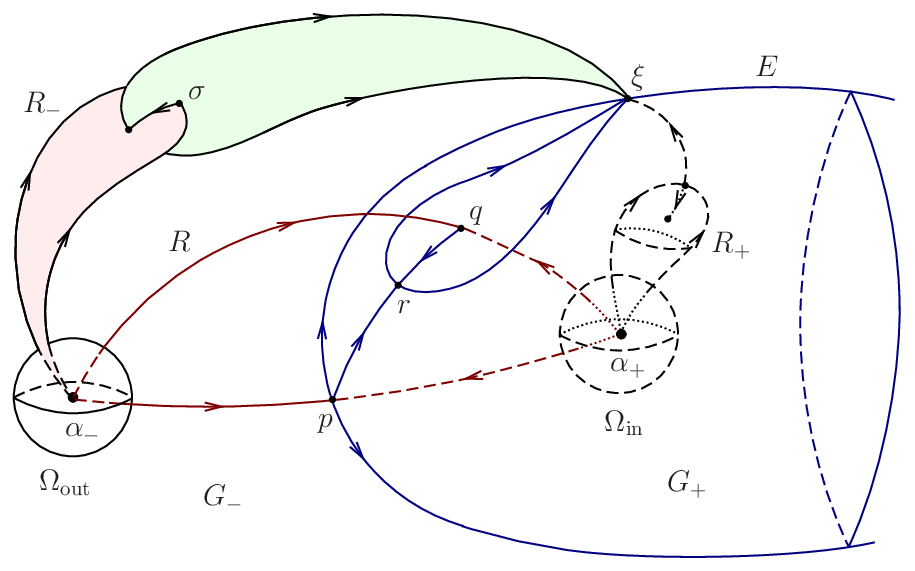}
    \captionsetup{width=0.75\textwidth}
    \caption{Schematic phase portrait of $\phi^t$ with various types of saddles.  
    Intersection between pink and green sets is a heteroclinic intersection. 
    Here, $p,q \in R \cap \mathcal{C}_{N-1}^*$, $\sigma \in R \setminus \mathcal{C}_{N-1}^*$ and $\sigma \in \mathcal{C}_{-}$, and $\xi$ is a sink.} 
    \label{AE}
\end{figure}

\ref{prop:efc:4} 
Since $E$ is closed (by its definition as an intersection of closed sets), the assertion \ref{prop:efc:1} and \cite[Theorem~1.1]{Glob} imply that $E$ is an attractor of $\phi^t$. 

\ref{prop:efc:2}  
Denote 
$$
\mathcal C_+
=
\{p \in \mathcal{C}:~ \overline{W^s_p} \text{ does not contain } \alpha_-\}
\quad \text{and} \quad
\mathcal C_-
=
\{p \in \mathcal{C}:~ \overline{W^s_p} \text{ does not contain } \alpha_+\}. 
$$
Obviously, $\alpha_\pm \in \mathcal C_\pm$, and $\mathcal C_+ \cap \mathcal C_- = \varnothing$ by \ref{star4}.
Let us also consider the sets
$$
R_\pm=\bigcup\limits_{r\in\mathcal C_\pm}W^s_r
\quad \text{and} \quad 
H_\pm=\bigcup\limits_{r\in\mathcal C_\pm}W^u_r.
$$
It is evident from \eqref{eq:justref} that $R_+$ and $R_-$ are disjoint, and, similarly, $H_+$ and $H_-$ are disjoint. 
Moreover, $G_\pm \subset B_\pm \subset H_\pm$. 
It is also clear that $R_\pm$ is connected.  
Let us justify that $R_\pm$ is a closed set. 
For this purpose, it is sufficient to show that $\overline{W^s_r} \subset R_\pm$ for any $r \in \mathcal{C}_\pm$. 
Let us fix some $r \in \mathcal{C}_+$. 
By \ref{star2}, the claim will follow if we prove that $W^s_p \subset R_+$ for any $p$ such that $W^u_p \cap W^s_r \neq\varnothing$. 
If $B_- \cap W^s_p = \varnothing$, then $p \in \mathcal{C}_+$, and the claim follows. 
Suppose now that there exists $p$ such that $B_- \cap W^s_p  \neq \varnothing$ and $W^u_p \cap W^s_r \neq\varnothing$.
But then \ref{star3} yields $B_- \cap W^s_r \neq \varnothing$, which implies that $\alpha_- \in \overline{W^s_r}$, a contradiction to the assumption $r \in \mathcal{C}_+$. 
That is, $R_\pm$ is closed. 

Since $R_\pm$ is closed, \cite[Theorem~1.1]{Glob} implies that $R_\pm$ 
is a repeller of $\phi^t$. Namely, there exists a trapping neighborhood $U_\pm$ of $R_\pm$ such that $R_\pm=\bigcap_{t\leq 0}\phi^t(U_\pm)$, and hence $H_\pm = \bigcup_{t\geq 0}\phi^t(U_\pm)$. 
It then follows from the connectedness of $R_\pm$ that $U_\pm$ can also be chosen connected, and hence $H_\pm$ is connected, as well.  

In view of \ref{star4} and the definitions of $\mathcal{C}_\pm$ and $\mathcal{C}_E$, we see that $\mathcal{C} = \mathcal{C}_+ \sqcup \mathcal{C}_E \sqcup \mathcal{C}_-$. 
Hence, we deduce from \ref{star1} that 
\begin{equation}\label{eq:decompsn}
\mathbb S^N=H_-\sqcup E\sqcup H_+.
\end{equation}
Since $\mathbb S^N\setminus E=H_-\sqcup H_+$ and $H_\pm$ is connected, the set $\Omega_\pm:=H_\pm\cap\Omega$ is the complement of $H_\pm$ to the $N$-dimensional (topological) ball $H_\pm\setminus\Omega$, and hence $\Omega_\pm$ is connected.  
Therefore, $\Omega\setminus E=\Omega_-\sqcup\Omega_+$. 
Moreover, it follows from definition that $E$ is a closed subset of $\Omega$. 
This fact and \eqref{eq:decompsn} imply that $H_\pm$ is open, and hence $\Omega_\pm$ is a domain. 

It remains to prove that $E$ is connected. 
Recall from \ref{prop:efc:4} that 
$E$ is an attractor of $\phi^t$. 
Moreover, in view of \eqref{eq:decompsn},  $U=\overline{\mathbb S^N\setminus(U_+\cup U_-)}$ is its trapping neighborhood. 
Noting that the intersection of nonempty, nested, connected, compact sets is a nonempty, connected, compact set (see, e.g., \cite[Proposition~10.1]{grin}), the connectedness of $E$ will follow from the connectedness of $U$. 
Thus, our aim is to prove that $U$ is connected. 

First, we justify that 
$S^N\setminus U_\pm$ is connected. 
Suppose, by contradiction, that $\mathbb S^N\setminus U_-$ has at least two connected components, say, $T_1$ and $T_2$. 
Clearly, only one of these connected components can contain $\alpha_+$. 
Assume, without loss of generality, that $\alpha_+ \in T_2$, so that $\alpha_\pm \not\in T_1$. 
By definition, $\overline{T_1}$ is a trapping neighborhood for $\phi^{-t}$.  
Consequently, there exists a critical point $r \in T_1$. 
By \ref{star4}, $\overline{W^s_r}$ contains a source. 
That is, there is a continuous path $\gamma$ in $\overline{W^s_r}$, consisting of flow lines, that connects $r$ with either $\alpha_+$ or $\alpha_-$. 
Since $\alpha_\pm \not\in T_1$, $\gamma$ must intersect $\partial T_1 \subset \partial U_-$ and 
enter the trapping neighborhood $U_-$. That is, $\gamma$ necessarily connects $r$ with $\alpha_-$. 
Consequently, we get $r \in R_-$. 
But then, by the definition of $U_-$, we must have $r \in U_-$, contradicting our initial choice $r \in T_1 \subset \mathbb{S}^N \setminus U_-$. 
Thus, we conclude that $\mathbb S^N\setminus U_-$ is connected. 
The same argument shows the connectedness of $\mathbb S^N\setminus U_+$. 

Recalling that each connected component $T$ of $\partial U_\pm$ can be chosen to be a smooth hypersurface, the Jordan-Brouwer separation theorem implies that $T$ divides $\mathbb{S}^N$ into two connected components. 
Since both $\mathbb S^N\setminus U_\pm$ and $U_\pm$ are connected, we conclude that $\partial U_\pm$ must also be connected. 

Since $U_+$ is, by construction, a compact subset of $\mathbb S^N\setminus U_-$, and since both  $\mathbb S^N\setminus U_-$ and $\partial U_+$ are connected, we conclude that 
$\partial U_+$ divides $\mathbb S^N\setminus U_-$ into two connected components $U_+$ and $\mathbb{S}^N \setminus (U_+ \cup U_-)$, see, e.g., \cite[Corollary~3]{lemmens2018local}. 
The connectedness of $\mathbb{S}^N \setminus (U_+ \cup U_-)$ implies that its closure, $U$, is connected. 
Consequently, $E$ is also connected. 
\end{proof}

In the planar case, the topology of the effectless cut $E$ can be refined as follows.
\begin{proposition}\label{prop:smd} 
Let $N=2$. 
Then $E$ is a simple closed curve and hence is homeomorphic to $\mathbb S^{1}$, while  
$\Omega_\pm$ is homeomorphic to $\mathbb S^{N-1}\times(0,1)$. 
\end{proposition}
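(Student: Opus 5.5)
In the planar case, the effectless cut $E$ is the closed set described in Proposition~\ref{prop:efc}\ref{prop:efc:1}, $E=\bigcup_{p\in\mathcal C_E}W^u_p$. On $\mathbb S^2$ the critical points are sources ($\mathcal C_2$), saddles ($\mathcal C_1$) and sinks ($\mathcal C_0$). The plan is to show that $E$ is a finite planar graph of a special form and then to identify it as a simple closed curve.

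First, I would describe the cells in $E$. Since $\alpha_\pm\notin\mathcal C_E$, we have $\mathcal C_E\subset\mathcal C_1\cup\mathcal C_0$. For a saddle $p$, the stable manifold $W^s_p$ consists of $p$ and two separatrices, each emanating from a source. Hence $p\in\mathcal C_E$ exactly when $p\in\mathcal C^*_1$, i.e.\ its two stable separatrices come from $\alpha_+$ and $\alpha_-$ respectively. For such $p$, $W^u_p$ is an open arc consisting of $p$ and two unstable separatrices, each ending at a sink. Transversality in dimension two forbids saddle-to-saddle connections, so the endpoints are indeed sinks. A sink $r$ lies in $\mathcal C_E$ iff $\overline{W^s_r}$ contains both $\alpha_\pm$. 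By \ref{star2}, the closure of each such $W^u_p$ adds only sinks in $\mathcal C_E$. Thus $E$ is a finite graph whose vertices are sinks in $\mathcal C_E$ and whose edges are the closed arcs $\overline{W^u_p}$, $p\in\mathcal C^*_1$. By Proposition~\ref{prop:efc}\ref{prop:efc:2}, $E$ is connected.

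Second, I would show that every vertex of this graph has degree exactly $2$; this step is the main obstacle. Take a sink $r\in\mathcal C_E$. Its basin $W^s_r$ is an open disk, and its boundary is a closed curve formed by stable separatrices of saddles, together with their sources and the saddles themselves. Going around $r$ cyclically, the unstable separatrices of saddles entering $r$ alternate with sources on $\partial W^s_r$. An edge of $E$ at $r$ corresponds exactly to a saddle on $\partial W^s_r$ whose two adjacent sources are $\alpha_+$ and $\alpha_-$, i.e.\ to a switch between the two sources around the cycle. Since only two sources exist, the cyclic sequence of sources on $\partial W^s_r$ contains both (because $r\in\mathcal C_E$) and has an even number of switches; hence the degree is positive and even.

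To get degree exactly two, I would use the decomposition $\mathbb S^2\setminus E=H_-\sqcup H_+$ from \eqref{eq:decompsn} with $H_\pm$ connected. If the degree were at least $4$, then near $r$ there would be at least two sectors belonging to $H_+$ separated by sectors of $H_-$. Joining two $H_+$-sectors by a path inside $H_+$ would produce, together with $r$, a Jordan curve separating two $H_-$-sectors. That contradicts the connectedness of $H_-$. I would formalize this with the Jordan curve theorem.

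Finally, a finite connected graph in which every vertex has degree $2$ is a simple closed curve. One caveat is that $\mathcal C_E$ might contain no sinks at all. That cannot happen, because each saddle in $\mathcal C_E$ has unstable separatrices ending at sinks, and those sinks lie in $E$ by \ref{star3}. Hence $E\cong\mathbb S^1$. By the Jordan curve theorem on $\mathbb S^2$, $E$ bounds two open disks, which are $H_+$ and $H_-$. Recall that $\Omega_\pm=H_\pm\cap\Omega=H_\pm\setminus\overline{K_\pm}$, where $K_\pm=H_\pm\setminus\Omega$ is the closed disk $\overline{\Omi}$ or $\mathbb S^2\setminus\Omo$, lying compactly inside $H_\pm$ with smooth boundary $S_\pm$. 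So $\Omega_\pm$ is an open disk with a closed smooth disk removed from its interior. By the annulus theorem in the plane (Schoenflies), $\Omega_\pm$ is homeomorphic to $\mathbb S^1\times(0,1)$.
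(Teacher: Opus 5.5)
Your proof is correct, but it takes a different route from the paper.

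\textbf{The paper's route.} It works globally with stable manifolds. The closed arcs $\overline{W^s_p}$, $p\in\mathcal C_E^*$, each join $\alpha_+$ to $\alpha_-$ and cut $\mathbb S^2$ into cyclically arranged disks $\mathcal D_1,\dots,\mathcal D_m$. The paper then shows that each $\mathcal D_i\cap E$ contains exactly one sink:
\begin{itemize}
\item at least one, by the connectedness of $E$;
\item at most one, since a second sink would force another saddle of $\mathcal C_E^*$ whose stable arc would split $\mathcal D_i$.
\end{itemize}
Hence $E=\omega_1\cup W^u_{p_2}\cup\dots\cup\omega_m\cup W^u_{p_1}$.

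\textbf{Your route.} You treat $E$ as a finite graph, with the sinks of $\mathcal C_E$ as vertices and the arcs $\overline{W^u_p}$, $p\in\mathcal C^*_1$, as edges, and you argue locally at each sink $r$. The degree of $r$ equals the number of $\alpha_+/\alpha_-$ switches in the cyclic sequence of source labels on a small level circle around $r$, so it is even and positive. The connectedness of $H_\pm$ (equivalently, of $\Omega_\pm$ from Proposition~\ref{prop:efc}), combined with a Jordan curve argument, rules out degree $\ge 4$. A connected $2$-regular graph is a circle, and this finishes the proof.

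\textbf{What each approach buys.} Your version isolates exactly which global input excludes configurations such as a figure-eight of two loops at one sink. Such a configuration is compatible with the alternation of labels, and is killed only by the connectedness of $H_+$. The paper excludes it instead through the Jordan-curve geometry of the stable arcs. Your final step is also more careful: you invoke Schoenflies to get the annulus, whereas the paper only notes that $\Omega_\pm$ is bounded by two closed curves. The paper's version is shorter and needs no graph bookkeeping or local separation lemma.

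\textbf{Points to tighten when writing it out.}
\begin{itemize}
\item Your description of $\partial W^s_r$ as ``a closed curve'' is imprecise. If both unstable separatrices of a saddle end at $r$, its stable separatrices are approached from both sides. This is harmless, because the switch count uses only the cyclic order on the small circle.
\item Build the Jordan curve so that the path in $H_+$ meets the two segments to $r$ only at its endpoints. Take the first hitting time of the second segment and the last preceding hitting time of the first.
\item Then use that every point of a Jordan curve is a boundary point of both complementary components. This puts the two intervening $H_-$-sectors on different sides.
\end{itemize}
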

\begin{proof}
Let $\mathcal C^*_E\subset\mathcal C_E$ be the set of \textit{saddles} belonging to $E$.  
Then $\mathbb{S}^2$ is decomposed by $\bigcup_{p\in\mathcal C_E^*}\overline{W^s_p}$ into $m$ closed (topological) disks $\mathcal{D}_i$, $i \in \{1,\dots,m\}$. 
We assume that the disks $\mathcal{D}_i$ and the saddle points $p_i\in \mathcal C^*_E$ are indexed such that $\mathcal D_i\cap\mathcal D_{i+1}=\overline{W^s_{p_{i+1}}}$, where $\mathcal D_{m+1}:=\mathcal D_1$. 
Let us show that $\mathcal{D}_i \cap E$ contains exactly one sink $\omega_i$. 
(Note that, in general, $\mathcal{D}_i$ might contain several sinks, see Figure~\ref{Bi}.)

\begin{figure}[!ht]
    \centering
    \includegraphics[width=0.6\linewidth]{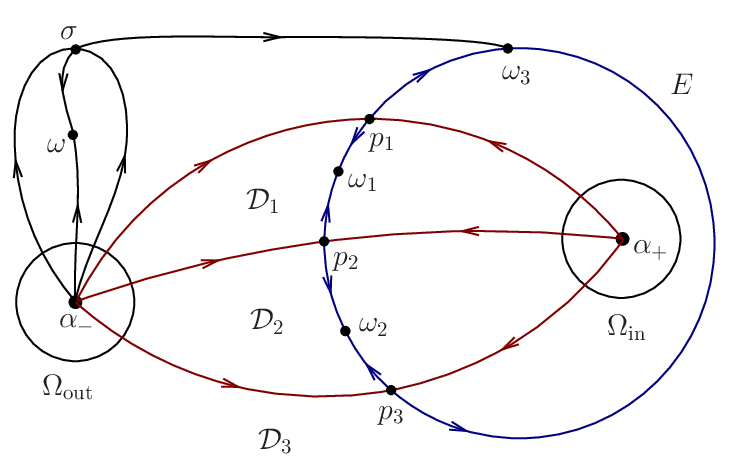}
    \caption{Illustration to Proposition~\ref{prop:smd} with $m=3$.}
    \label{Bi}
\end{figure}

Fix $i$ and consider two consecutive saddles $p_i,p_{i+1}$ ($p_{m+1}:=p_{1}$). Then  $\overline{W^s_{p_i}}$ and $\overline{W^s_{p_{i+1}}}$ form $\pa \mathcal{D}_i$. Recalling that in the case $N=2$ there are no heteroclinic intersections, and that $E$ is connected by Proposition~\ref{prop:efc}, we see that the curve $\mathcal{D}_i \cap E$ connecting $p_i$ and $p_{i+1}$ must contain at least one sink. 
If there were at least two such sinks, then there have to be a saddle $p_j \in \mathcal{C}_E^*$ between them. 
But then $\overline{W^s_{p_j}}$ would split $\mathcal{D}_i$ into two connected components, which contradicts our definition of $\mathcal{D}_i$. 
Thus, $\omega_i \in \mathcal{D}_i \cap E \cap \mathcal{C}_0$ is unique for each $i$. 
So, the two unstable separatrices of ${p_{i+1}}$ lie in the disks $\mathcal{D}_i$ and $\mathcal{D}_{i+1}$, respectively. 
Thus, 
$$
E=\omega_1\cup W^u_{p_2}\cup\dots\cup\omega_{m}\cup W^u_{p_1}\cup\omega_1
$$ 
is a simple closed curve, and hence $E$ is homeomorphic to $\mathbb S^1$. By Proposition~\ref{prop:efc}, $E$  divides $\Omega$ into two connected components $\Omega_-$, $\Omega_+$ such that ${\Omega_\pm}$ is bounded by the two closed curves $S_\pm$ and $E$. 
Since ${\Omega_\pm}\subset\mathbb S^2$, it is homeomorphic to $\mathbb S^1\times (0,1)$.  
\end{proof}

Unlike the case $N=2$, in higher dimensions the situation is more difficult. 
\begin{proposition}
Let $N=3$. 
Then there exists a pair $(\Omega,u)$ such that $u$ has precisely two sources, two sinks, and two saddles, and all critical points other than the sources belong to $E$, but $E$ is not a two-dimensional manifold. 
In particular, $E$ is not homeomorphic to $\mathbb S^{2}$.
\end{proposition}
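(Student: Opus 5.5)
The plan is to build the pair $(\Omega,u)$ \emph{backwards}. We will first construct a Morse--Smale gradient-like flow on $\mathbb S^3$ with the prescribed critical points and a prescribed heteroclinic pattern, then realize it as the gradient flow of a Morse function, and finally cut out $\Omega$.

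\textbf{Choice of critical points.} Since $\chi(\mathbb S^3)=0$, the Morse relation $\sum_k(-1)^k|\mathcal C_k|=0$ with $|\mathcal C_3|=|\mathcal C_0|=2$ forces the two saddles to have indices $1$ and $2$. Call them $q\in\mathcal C_1$ and $p\in\mathcal C_2$, and let $\omega_1,\omega_2$ be the sinks.

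\textbf{Handle decomposition.} We will start from a handle decomposition of $\mathbb S^3$ with two $0$-handles (around $\omega_1,\omega_2$), one $1$-handle, one $2$-handle and two $3$-handles. The $1$-handle joins the two $0$-handles, so $W^u_q$ is an arc $\omega_1$--$q$--$\omega_2$. The attaching circle of the $2$-handle will be chosen to cross the belt sphere of the $1$-handle transversally in exactly one point. This is a cancelling pair, so the resulting manifold is again two balls glued, i.e.\ $\mathbb S^3$ after adding the two $3$-handles.

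\textbf{Verifying the setting.} By Smale's realization theorem (or by an explicit gluing of local models), this decomposition is realized by a Morse function $u$ on $\mathbb S^3$ whose gradient-descent flow has the following property. $W^u_p\cap W^s_q$ is a single transversal orbit $\ell$, and all other orbits of $W^u_p$ go to $\omega_1$ or $\omega_2$. The transversality of $W^u_p$ and $W^s_q$ (a one-dimensional intersection of two $2$-dimensional manifolds in $\mathbb S^3$) gives \ref{as:u:1}. We then remove small round balls around the two sources, which are the only maxima. The complement is diffeomorphic to $\mathbb S^2\times(0,1)$, and after stereographic projection it is a shell $\Omega=\Omo\setminus\overline{\Omi}$ on which \ref{as:u:2} and \ref{as:u:3} hold, since $-\nabla u$ points away from a maximum. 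Moreover, every saddle and sink should be adjacent to both basins, so that $\mathcal C_E=\{p,q,\omega_1,\omega_2\}$. We plan to obtain this by arranging that each of the two stable separatrices of $p$ emanates from a different source, and that the $2$-cell $W^s_q$ has closure containing both $\alpha_\pm$ (its boundary circle is split between the two basins). Then Proposition~\ref{prop:efc}\ref{prop:efc:1} gives
\[
E=\omega_1\cup\omega_2\cup W^u_q\cup W^u_p .
\]

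\textbf{Showing $E$ is not a surface.} The key step is to show that $E$ is not a $2$-manifold. We will analyze the local structure of $E$ near a point $x$ on $W^u_q$, away from $q$. The $2$-disk $W^u_p$ meets $W^s_q$ along the single orbit $\ell$. By the $\lambda$-lemma, a thin strip of $W^u_p$ transverse to $\ell$ is split by $W^s_q$ into two half-strips. Each half-strip, after passing near $q$, accumulates $C^1$ onto exactly one branch of $W^u_q$, lying on one side of it. Away from $\ell$, $W^u_p$ consists of orbits flowing directly to the sinks, and these stay away from $x$.

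Hence a neighbourhood of $x$ in $E$ is a single half-plane whose edge is the branch of $W^u_q$. We will verify that $H_2(E,E\setminus\{x\})=0\neq\mathbb Z$, using excision and the local model. This shows that $E$ is not a $2$-manifold, in particular not homeomorphic to $\mathbb S^2$. If the disk instead wraps so that several sheets meet along $W^u_q$, the same local-homology computation gives a rank $\ge 2$ group, and the conclusion is unchanged.

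\textbf{Main obstacle.} I expect the hard part to be the rigorous realization of this configuration with all the required properties at once: a single heteroclinic orbit, correct adjacency of both basins to every saddle and sink, and the Morse--Smale condition. I will first try an explicit model: glue local linear models near each critical point along a tubular neighbourhood of the arc $W^u_q$, controlling the flow through a Lyapunov function. Only if this becomes unwieldy will I fall back on Smale's realization theorem for the handle data described above.
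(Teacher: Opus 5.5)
Your construction rests on a configuration that cannot exist. Since $W^u_q$ joins the two sinks and $W^s_p$ joins the two sources, for a regular value $c\in(u(q),u(p))$ the sublevel set $\{u\le c\}$ consists of two $0$-handles joined by a $1$-handle. This is a $3$-ball, so $\Sigma=u^{-1}(c)$ is a $2$-sphere. The belt circle $c_q=W^s_q\cap\Sigma$ separates $\Sigma$, and every transverse crossing of the attaching circle $c_p=W^u_p\cap\Sigma$ with $c_q$ switches sides. Hence $c_p\cap c_q$ has an \emph{even} number of points, and a single heteroclinic orbit $\ell$ is impossible. The ``cancelling pair'' picture you have in mind needs a non-separating belt circle, i.e.\ a $1$-handle attached to a single $0$-handle; here the $1$-handle instead cancels one of the $0$-handles. (You do not need cancellation to recognize $\mathbb S^3$: for any number of crossings the manifold is two balls glued along $\mathbb S^2$.)

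Correcting the count to the smallest even number does not save the argument. With two crossings, exactly two half-sheets of $W^u_p$ accumulate on each branch of $W^u_q$, so $E$ is locally Euclidean there. In fact $\overline{W^u_p}$ is obtained from a closed disk by folding its boundary circle once onto the arc $\overline{W^u_q}$, and $E\cong\mathbb S^2$. A double-well function on an attracting round $2$-sphere in $\mathbb S^3$ is a Morse--Smale example of exactly this type, with all four non-source critical points in $E$. Your fallback ``rank $\ge 2$'' does not cover this case, since two sheets give $H_2(E,E\setminus\{x\})\cong\mathbb Z$.

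The missing idea is to force at least \emph{four} heteroclinic orbits, which is what the paper does. It glues $B_+$ carrying $v$ (two minima and an index-$1$ saddle $r$, your $q$) to $B_-$ carrying $32-v$, using a diffeomorphism $g$ of the boundary spheres for which $g(c_r)$ meets $c_p$ transversally in four points. Then $c_r$ has arcs on both sides of $c_p$, so $r,\omega_1,\omega_2\in\mathcal C_E$, and four half-sheets meet along each branch of $W^u_r$. Your local-homology computation then goes through, now giving $H_2(E,E\setminus\{x\})\cong\mathbb Z^3$. It would even make rigorous the non-manifold claim, which the paper supports only by a figure.
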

\begin{proof} 
In the case $N=3$, the intersection of $E$ with the basin of a sink belonging to $E$ need not be a (topological) disk, in general, and hence $E$ does not have to be a manifold. 
This phenomenon occurs in the presence of heteroclinic intersections in the flow $\phi^t$. 
In Figure~\ref{notS}, we provide a schematic example of a phase portrait of $\phi^t$ for which the effectless cut is not a manifold because of heteroclinic intersections between the saddles $p_1$ and $p_2$. 
(The shape of such $E$ resembles the so-called \textit{cross-cap} surface, see Figure~\ref{notS2}.) 

\begin{figure}[!ht]
    \centering
    \includegraphics[width=0.85\linewidth]{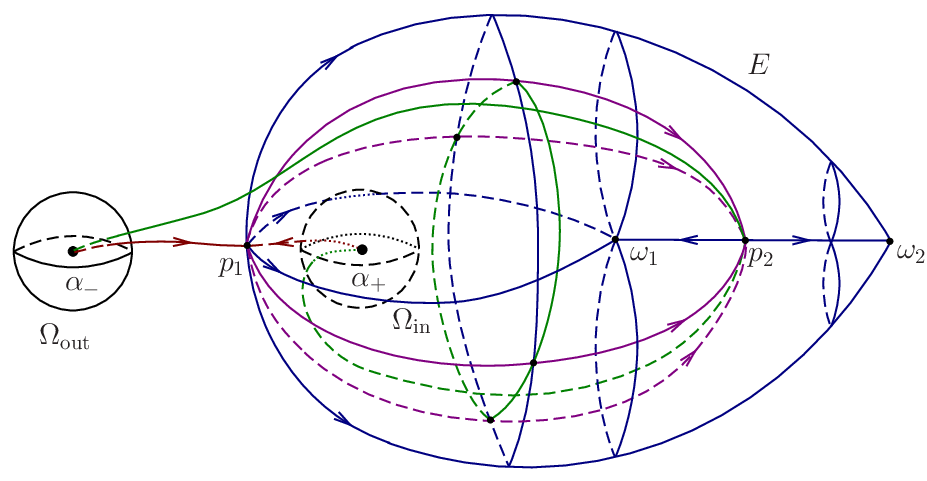}
\captionsetup{width=0.75\textwidth}
    \caption{Effectless cut (blue) $E = W_{p_1}^u \cup W_{p_2}^u$ which is not a manifold. Here, $p_1, p_2$ are saddles, and $\omega_1$, $\omega_2$ are sinks. Two green trajectories connect $p_2$ with sources $\alpha_\pm$. Green circle is a section of $W_{p_2}^s$. Four purple trajectories are heteroclinic intersections.}
    \label{notS}
\end{figure}

\begin{figure}[!ht]
    \centering
    \includegraphics[width=0.6\linewidth]{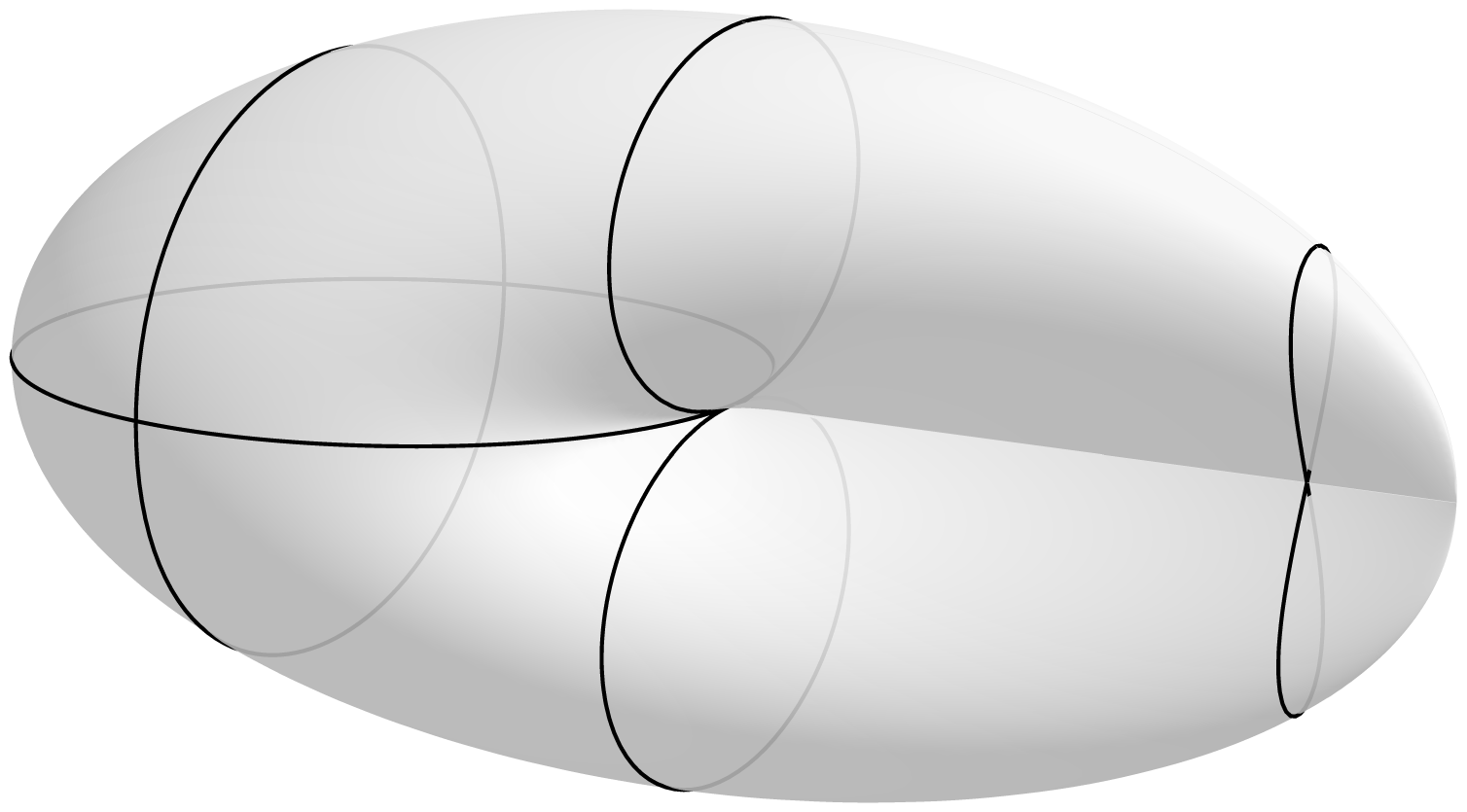}
    \caption{Cross-cap }
    \label{notS2}
\end{figure}

To complete the proof, we construct a Morse--Smale function $u$ on $\Omega$ with the stated properties. 

Let $\sigma:\mathbb{R}^+\to \left[0, 1\right]$ be a $C^\infty$-function such that $\sigma(t)=0$ for $t \leq 2$, $\sigma(t)=1$ for $t \geq 3$, and $\sigma$ is strictly increasing in $(2,3)$. 
For $x=(x_1,x_2,x_3)\in\mathbb R^3$, we denote
$$
\psi(x_3)=x_3^4 - 3 x_3^2,
\quad
z(x)=|x|^2+\psi(x_3)
\equiv
x_1^2+x_2^2+x_3^4 - 2 x_3^2,
$$ 
and let $v:\mathbb R^3\to\mathbb R$ be given by
\begin{equation}\label{eq:v}
v(x) =\left(1-\sigma(|x|^2)\right)\cdot z(x)+ \sigma(|x|^2)\cdot |x|^2. 
\end{equation} 

Let us show, by analyzing $\nabla v$, that $v$ is a Morse function with exactly three critical points: $(0,0,-1)$ (index $0$, minimum point), $(0,0,0)$ (index $1$, saddle), and $(0,0,1)$ (index $0$, minimum point).

If $|x|^2\leq 2$, then $\nabla v(x)=\nabla z(x)=\{2x_1,2x_2,4x_3(x_3^2-1)\}$, so that $v$ has exactly three critical points, all lying on the $x_3$-axis: $(0,0,-1)$, $(0,0,0)$, and $(0,0,1)$. 
To find the indices of these critical points, we compute the Hessian matrix: 
$$
\mathrm{Hess}(v(x))
= 
\begin{pmatrix} 
2 & 0 & 0 \\ 0 & 2 & 0 \\ 0 & 0 & 12x_3^2 - 4 
\end{pmatrix}.
$$ 
At $(0,0,\pm 1)$ the eigenvalues are $(2, 2, 8)$, all of which are positive; hence, the index is $0$. 
At $(0,0,0)$ the eigenvalues are $(2, 2, -4)$, there is exactly one negative eigenvalue; hence, the index is $1$. 

If $|x|^2\geq 3$, then $\nabla v(x)=\nabla (|x|^2)=\{2x_1,2x_2,2x_3\}$, so that $v$ has no critical points. 

Let $2< |x|^2 <3$. 
Then $v(x)=(1-\sigma(|x|^2))\cdot\psi(x_3)+|x|^2$. 
Denote
$$
w(x)
=
1-\psi(x_3) \cdot \left.\frac{d \sigma(t)}{d t}\right|_{t=|x|^2},
$$
so that $\frac{\partial v(x)}{\partial x_i}=2x_i\cdot w(x)$ for $i=1,2$, and $\frac{\partial v(x)}{\partial x_3}=2x_3\cdot w(x)+(1-\sigma(|x|^2))\cdot (4x_3^3-6x_3)$. 
Let a point $x_0=(x_1^0,x_2^0,x_3^0)$ be such that $|x_0|^2 \in (2,3)$ and 
$w(x_0)=0$. 
As $1-\sigma(|x_0|^2)>0$, we have  $\frac{\partial v(x)}{\partial x_3}|_{x_0}=0$ if and only if either $x^0_3=0$ or $x^0_3= \pm \sqrt{3/2}$. 
On the other hand, since $\frac{d \sigma(t)}{d t}|_{t=|x_0|^2}>0$ and  $\psi(0)=0$, $\psi(\pm \sqrt{3/2})=-9/4$, we get $w(x_0)>0$, which is impossible by our choice of $x_0$.  
Thus, every critical point of $v$, if any exists, must lie on the $x_3$-axis and its third coordinate satisfies the equation 
$$
1-(x_3^4 - 3 x_3^2) \cdot \left.\frac{d \sigma(t)}{d t}\right|_{t=x_3^2}+\left(1-\sigma(x_3^2)\right) \cdot (2x_3^2-3)=0.
$$ 
However, each term on the left-hand side of this equation is strictly positive for $2<x_3^2<3$. 
Hence, $v$ has no critical points in this region. 
This completes the verification of the number and types of critical points of $v$. 

Thus, $r=(0,0,0)$ is the saddle point of the gradient-descent flow of $v$ and its stable manifold $W^s_r$ is two-dimensional. 
By the construction, the sublevel set $B_+=\{x\in\mathbb R^3:v(x)\leq 16\}$ is the $3$-dimensional ball of radius 4 centered at the origin, and $c_r=\partial B_+\cap W^s_r$ is a circle. 
Now consider the function $32-v(x)$ on a copy $B_-$ of $B_+$. 
Let $p\in B_-$ be the saddle point of the function $32-v(x)$ and  $c_p=\partial B_-\cap W^u_p$. 
Let $g:\partial B_+\to\partial B_-$ be a diffeomorphism such that the intersection $g(c_r)\cap c_p$ consists of exactly four transverse intersection points. 
Then, for the 3-dimensional sphere $\mathbb{S}^3=B_+\cup_g B_-$, the natural projection $h:B_+\sqcup B_-\to \mathbb{S}^3$ is well-defined and the required function $u: \mathbb{S}^3 \mapsto \mathbb{R}$ is given by the formula
$$
u(h(x))=\begin{cases}
	v(x), & x\in B_+,\\
	32-v(x), & x\in B_-.
\end{cases}
$$
The proof is complete.
\end{proof}

Finally, it remains to prove the following result on the approximation of the first eigenfunction of \eqref{eq:D} by Morse--Smale first eigenfunctions of \eqref{cutproblem2d-pert}.
\begin{proposition}\label{prop:approx:MS}
Under the assumptions and with the notation of Proposition~\ref{prop:approx}, $u_n$ can additionally be chosen to be a Morse--Smale function in $\Omega$ for any $n \in \mathbb{N}$. 
\end{proposition}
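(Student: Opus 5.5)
The plan is to reuse the construction in the proof of Proposition~\ref{prop:approx} and add one more small perturbation supported inside $K$ that achieves the Morse--Smale property. Fix $n$. The function $u_n=u+\varphi\,(a_n\cdot x)$ from Proposition~\ref{prop:approx} is Morse, and all of its critical points lie in $\mathrm{Int}(K)$. By the Kupka--Smale theorem for gradient flows, in the form due to Smale and used in the proof of density of Morse--Smale gradient fields (see, e.g., \cite{Sm}), one can make the stable and unstable manifolds transverse. Since $u_n$ is Morse, its critical points are hyperbolic and isolated, so the only defect to repair is non-transversality of $W^u_p\cap W^s_r$ for saddles $p,r$. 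The standard remedy is a perturbation $u_n+\eta$, where $\eta\in C^\infty_c$ is supported in small balls placed on fundamental domains of the unstable manifolds, near the critical points but away from them. Such an $\eta$ can be chosen arbitrarily small in the $C^2$ norm, or even in $C^k$ for any fixed $k$. Transversality is then obtained by parametric transversality: perturbing $\eta$ within a finite-dimensional family of bumps moves the local unstable manifolds in all normal directions, and Sard's theorem gives a full-measure set of parameters.

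I then set $\tilde u_n=u_n+\eta_n$, with $\operatorname{supp}\eta_n\subset \mathrm{Int}(K)$ and $\vertiii{\eta_n}_2$ as small as desired. For small $\vertiii{\eta_n}_2$ the critical points stay nondegenerate and do not move out of $K$. Indeed, on $\Omega\setminus K$ the gradient is unchanged; on $K$ the set of critical points depends continuously on the perturbation, and nondegeneracy is an open condition. The number of critical points and their indices are preserved, so $\tilde u_n$ is Morse--Smale. Note that the flow is relevant only inside $\Omega$, and by Lemma~\ref{lem:uv}-type reasoning flow lines leave $\overline\Omega$ only through the boundary, where nothing changes. The remaining properties are checked exactly as in Proposition~\ref{prop:approx}. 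Provided $\vertiii{\tilde u_n-u}_1<\gamma/2$, positivity follows from Lemma~\ref{lem:uv}\ref{lem:uv:3}. If additionally $\vertiii{\eta_n}_2$ is smaller than $\gamma_4/4$, then $\Delta\tilde u_n<0$ on $K$, so there are no interior minima. The potential is defined by $V_n=(\Delta\tilde u_n+\lambda_1(\Omega)\tilde u_n)/\tilde u_n$, which again vanishes off $S$. This gives $\lambda_1(\Omega,V_n)=\lambda_1(\Omega)$ and $\|V_n\|_\infty\le \frac{2}{\gamma}(1+\lambda_1(\Omega))\vertiii{\tilde u_n-u}_2\to0$ once $\vertiii{\eta_n}_2\le 1/n$. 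Regularity is still $C^{2,\theta}(\overline\Omega)$, since $\eta_n$ is smooth and compactly supported.

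The main obstacle is the first step: making sure that transversality can be achieved by perturbing the function $u_n$ itself, so that the vector field remains a gradient, and with support in a compact subset of $\Omega$. The classical Kupka--Smale and Smale density results are stated for gradient fields on closed manifolds. To apply them, I would use the extension to $\mathbb S^N$ described at the beginning of Section~\ref{sec:surface}. On $\mathbb S^N$ the relevant heteroclinic orbits between saddles of $u_n$ lie entirely in $\Omega$: saddles lie in $K$, and by \ref{as:u:2} orbits entering $\Omega$ backward never exit. Hence perturbations localized near the saddles in $K$ suffice. I would first perturb near the saddles of highest index, in order of decreasing critical value, making each $W^u_p$ transverse to all $W^s_r$ in turn. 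Because transversality is an open condition on compact pieces, later small perturbations do not destroy earlier achievements. The critical values can also be made distinct if this is convenient for the ordering.
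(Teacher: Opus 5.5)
Your argument is correct, but it is organized differently from the paper's. The paper does not keep the Morse function $u+\varphi\,(a_n\cdot x)$. It approximates $u$ directly by Morse--Smale functions $w_n$, a global density statement cited from Banyaga--Hurtubise \cite{banyaga2004lectures}, and then glues: $u_n=\varphi w_n+(1-\varphi)u$. Localization is obtained afterwards from the dynamics near $\partial\Omega$. A fixed neighbourhood $U$ of $\partial\Omega$ lies in the basins of the two extrema added outside $\Omega$. By disjointness of unstable manifolds, no unstable manifold of a saddle meets $U$. Hence all heteroclinic intersections lie in a fixed compact subset of $\Omega$, and the cut-off does not affect them.

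You instead keep the first perturbation and add a second one, $\eta_n$, supported in $\mathrm{Int}(K)$ near fundamental domains of the local unstable manifolds. This buys you several things:
\begin{enumerate}[label={\rm(\alph*)}]
\item $\tilde u_n=u$ near $\partial\Omega$ is automatic.
\item The verification of (i)--(v) is literally the one in Proposition~\ref{prop:approx}. Write $\tilde u_n=\varphi v+(1-\varphi)u$ with $v=u+a_n\cdot x+\eta_n$, using $\varphi=1$ on $\mathrm{supp}\,\eta_n$, so that Lemma~\ref{lem:uv} applies as stated.
\item You never need to know where the heteroclinic orbits lie, nor to check that gluing leaves the invariant manifolds unchanged on the relevant region.
\end{enumerate}
The price is that you need the localized, fixed-metric version of Smale's density theorem (perturbing the function rather than the metric, with support near fundamental domains), as in Palis--de Melo, rather than a global density statement.

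One step in your sketch of that theorem should be fixed. Suppose $W^u_p$ is moved by a perturbation near a fundamental domain of $W^u_{p,\mathrm{loc}}$, i.e.\ at levels slightly below $u_n(p)$ for the descent flow. Then your justification of the order, decreasing critical value with ``openness on compact pieces'', does not work. If $W^u_p\cap W^s_{p'}\neq\varnothing$ for a lower saddle $p'$, then $W^u_p$ passes arbitrarily close to the fundamental domain of $W^u_{p'}$ (by the $\lambda$-lemma, or simply by continuity of the flow). So a later perturbation near $p'$ does move $W^u_p$. Openness on compact pieces does not control this, because the relevant part of $W^u_p$ is not compact and accumulates on $W^u_{p'}$, which at that stage need not yet be transverse to the $W^s_r$. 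The clean fix is to treat the saddles in increasing order of critical value. Then the perturbation near $p$ reaches neither $W^u_{p'}$ for lower $p'$ nor the portions of $W^s_r$ near them, so earlier transversality is preserved exactly. Equivalently, you can move stable manifolds in decreasing order.
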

\begin{proof}
The proof follows the same lines as the proof of Proposition~\ref{prop:approx}, but with the following modification. 
Instead of approximating $u$ by Morse functions $v_n$ in $\overline{\Omega}$, we may approximate $u$ by Morse--Smale functions $w_n$ in $\overline{\Omega}$, see, e.g., \cite[Remark~6.7]{banyaga2004lectures}. 
Since $u$ has no critical points in a neighborhood $U$ of $\partial \Omega$, neither does $w_n$ for any sufficiently large $n$. 
Moreover, as discussed at the beginning of this section, each $w_n$ can be considered as a function from $\mathbb{S}^N$ to $\mathbb{R}$ such that $w_n$ has exactly one maximum point $\alpha_+^n$ in $\Omi$ and one maximum point $\alpha_-^n$ in $\mathbb{S}^N \setminus \overline{\Omo}$, so that $U$ belongs to $W^u_{\alpha_+^n} \cup W^u_{\alpha_-^n}$.   
Thus, by \eqref{eq:justref}, the unstable manifold of any saddle point of $w_n$ does not intersect $U$. 
This implies that all heteroclinic intersections of $w_n$ belong to a compact subset $K$ of $\Omega$ independent of $n$, and they are transversal since $w_n$ is a Morse--Smale function. 
Thus, taking a cut-off function $\varphi$ as given in \eqref{eq:phi-properties} such that $\text{supp}(\varphi) \subset \Omega \setminus U$, we conclude that the function $u_n
= \varphi  w_n+
(1-\varphi) u $ coincides with $w_n$ in $K$ and has no critical points in $\Omega \setminus K$, so that $u_{n}$
is a Morse--Smale function. 
\end{proof}

\section{Final remarks}\label{sec:final_remarks}

\begin{remark}\label{rem:perimeter_constraint}
    As mentioned in Section \ref{sec:intro}, one can observe that in the planar , the $(N-1)$-th quermassintegral constraint in \eqref{eq:reverseFK-assumption-DN} coincides with the perimeter constraint on $\pa\Omi$. 
	However, in higher dimensions, the former is stronger than the latter. 
	Indeed, the $(N-1)$-th quermassintegral constraint implies (see, for instance, \cite[Proposition~3.4]{AnoopMrityunjoy}) that \begin{equation}\label{eq:perimeter_cons}
    |\pa \Omi|\leq |\pa B_\alpha|.
    \end{equation}
    Moreover, this inequality is strict if $\Omi$ is not a ball. 
    Now, there exist annular domains $\mathcal{A}_1:=B_\be\setminus \overline{B_\al}$ satisfying \eqref{eq:reverseFK-assumption-DN0}, and $\mathcal{A}_2:=B_{\be'}\setminus \overline{B_{\al'}}$ satisfying \eqref{eq:reverseFK-assumption-DN}. Furthermore, we must have $\be'>\be$ and $\al'>\al$ in view of \eqref{eq:perimeter_cons} (see, e.g., \cite[Remark~1.5]{AnoopMrityunjoy}) and the volume constraint appearing in \eqref{eq:reverseFK-assumption-DN}. Using the variational characterization \eqref{eq:lambda1}, one can verify that 
    \begin{equation}\label{eq:A1A2}
        \lambda_1^{\mathcal{DN}}(\mathcal{A}_1)<\lambda_1^{\mathcal{DN}}(\mathcal{A}_2).
    \end{equation}
    However, it is not known whether one can establish the following reverse Faber--Krahn inequality (for the Dirichlet-Neumann case) with the perimeter constraint (see \eqref{eq:reverseFK-assumption-DN0}):
\begin{equation}\label{open:perimeter_cons}
        \lambda_1^{\mathcal{DN}}(\Om)<\lambda_1^{\mathcal{DN}}(\mathcal{A}_1),
    \end{equation}
    which would be \textit{stronger} than \eqref{eq:reverseND} (see also, \cite[Theorem 1.6]{AnoopMrityunjoy} and \cite[Theorem 1.1]{DellaPiscitelli}) in view of \eqref{eq:A1A2}.
    We refer to \cite[Section 6, Open problem (4)]{AnoopMrityunjoy}, where the authors provide numerical evidence supporting the validity of \eqref{open:perimeter_cons} for certain domains in $\R^3$.
    Consequently, the extension of Theorem~\ref{thm:FK} to the perimeter constraint on $\pa\Omi$ instead of the $(N-1)$-th quermassintegral constraint remains unknown.
\end{remark}

\begin{remark}
	From the proof of Theorem \ref{thm:FK} (see also the proof of \cite[Theorem 1.2]{ABG2}), we observe that the inequality \eqref{eq:perimeter_cons} is insufficient for the application of the inner parallel method if the Robin parameter $\hi$ is negative. 
	Consequently, the extension of Theorem \ref{thm:FK} to the case $\hi,\ho<0$ remains an open problem in higher dimensions. In the planar case, such a result was recently established in \cite[Theorem~1.2]{ABG2}. 
\end{remark}

\begin{remark}
    In Proposition~\ref{prop:approx:MS}, we showed that the first eigenfunction $u$ of \eqref{eq:D} can be approximated by first eigenfunctions of the perturbed problem \eqref{cutproblem2d-pert} in such a way that each approximating eigenfunction is a Morse--Smale function. 
    It is natural to ask whether \textit{any} eigenfunction of \eqref{eq:D} is Morse--Smale in the case of general position, either with respect to perturbation by a potential or with respect to domain perturbation, cf.\ \cite{albert1978generic,uhl}. 
\end{remark}

	\bigskip
	\noindent
	\textbf{Acknowledgments.}
	T.V.~Anoop acknowledges the Core Research Grant  (CRG/2023/005344)  by ANRF.
    The work of O.~Pochinka was supported by the Russian Science Foundation (project 23-71-30008).
    M.~Ghosh is supported by TIFR Centre for Applicable Mathematics (TIFR-CAM). 
    V.~Bobkov and O.~Pochinka are grateful to M.~Barinova (HSE) for inspiring and motivating discussions during early stages of the project.

        \smallskip
	\noindent
	\textbf{Statements and Declarations.}
 The authors declare no potential conflict of interest.

	\smallskip
	\noindent
	\textbf{Data Availability.}
    No data was used or generated in the research described in the article.

\addcontentsline{toc}{section}{References}
\bibliographystyle{abbrvurl}
\bibliography{Reference}

\end{document}